\newtheorem*{thmA}{Theorem A}
\newtheorem*{thmB}{Theorem B}
\newtheorem*{thmD}{Theorem D}
\newtheorem*{corC}{Corollary C}
\newtheorem{theorem}{Theorem}[section]
\newtheorem{corollary}[theorem]{Corollary}
\newtheorem{lemma}[theorem]{Lemma}
\newtheorem{proposition}[theorem]{Proposition}
\newtheorem{example}[theorem]{Example}
\newtheorem{examples}[theorem]{Examples}
\newtheorem{remark}[theorem]{Remark}
\def\Si{\Sigma}
\def\la{\langle}
\def\ra{\rangle}
\def\Sm0#1{{{\rm GL}}(1,#1)}
\begin{document}
\title[Covering symmetric alternating]
{Normal coverings of finite symmetric and alternating groups}
\author[D. Bubboloni]{Daniela Bubboloni}
\address{D. Bubboloni, Dipartimento di Matematica per le
  Decisioni\newline
Universit\`a di Firenze, via Lombroso 6/17\\
50134 Firenze, Italy.}
\email{daniela.bubboloni@dmd.unifi.it}

\author[C.E. Praeger]{Cheryl E. Praeger}
\address{C. E. Praeger,  Centre for the Mathematics of Symmetry and Computation\\
School of Mathematics and Statistics\\
The University of Western Australia\\
35 Stirling Highway \\ Crawley, WA 6009, Australia}
\email{cheryl.praeger@uwa.edu.au}

\begin{abstract}  In this paper we investigate the minimum number of maximal subgroups $H_i,\ i=1 \dots k $ of the symmetric group $S_n$ (or the alternating group $A_n$) such that each element in the group $S_n$ (respectively $A_n$) lies in some conjugate of one of the $H_i.$ We prove that this number lies between $a\phi(n)$ and $bn$ for certain constants $a, b$, where $\phi(n)$ is the Euler phi-function, and  we show that the number depends on the arithmetical complexity of $n$. Moreover in the case where $n$ is divisible by at most two primes, we obtain an upper bound of $2+\phi(n)/2$, and we determine the exact value for  $S_n$ when $n$ is odd and for $A_n$ when $n$ is even.
\bigskip

{\bf Keywords:} Covering, symmetric group, alternating group.

\end{abstract}
\maketitle
\section{Introduction}
Let $G$ be a finite group.
A {\em covering} of $G$ is a set of proper subgroups of $G,$ called {\em components}, whose union is $G.$ The smallest integer $m$ such that $G$ has a covering of cardinality $m$ is denoted $\sigma(G)$, and was introduced by J. H. E. Cohn \cite{CO}. A  covering of cardinality $\sigma(G)$ is called a {\em minimal covering}. 

In \cite{MA}, A. Mar\'{o}ti  found ``exact or asymptotic formulas'' for $\sigma(S_n)$ and $\sigma(A_n)$ and, for example, proved that $\sigma(S_n)=2^{n-1}$ for odd $n\neq 9$. In many cases the  minimal
coverings which realize $\sigma(G)$ are sets of conjugacy classes of subgroups. For instance, this is the case when $G=S_n$ for odd integers $n\neq 9$  and when $G=A_n$ for even integers $n$ not divisible by $4.$ In this paper we study coverings which are unions of conjugacy classes of subgroups with a view to minimising the number of classes involved. Part of our motivation comes from a number theoretic application which we discuss in Subsection~\ref{sub:appn}.

We define a {\em normal covering} for a finite group $G$ as a covering which is invariant under $G$-conjugation. Such a covering may be viewed as a generalization of the notion of a normal partition of a group introduced by R. Baer \cite{B}. If  $\  H_1,\dots,H_k$ are pairwise non-conjugate proper subgroups of $G$ such that
$$
G=\bigcup_ {i=1}^k\bigcup_{g\in G}H_i^g
$$
then we call the $H_i$ {\em basic components} of the corresponding normal covering $\Si=\{H_i^g\,|\,1\leq i\leq k, g\in G\}$, and we say that $\delta=\{H_1,\dots,H_k\}$ is a {\em basic set} for $G$ which generates 
$\Sigma.$ The smallest size of a basic set is denoted $\gamma (G)$,
a normal covering with $\gamma (G)$ basic components is called a {\em minimal normal covering}, and a basic set of size $\gamma (G)$ is called a {\em minimal basic set} of $G$. 

A finite group admits a normal covering if and only if it is non-cyclic. Moreover, for a non-cyclic group $G$, the set of all  maximal subgroups of $G$ forms a normal covering; it is well known that $\gamma(G)\geq 2$, and this lower bound can be attained (see \cite{BBH}), while if $G$ is abelian then $\gamma(G)=\sigma(G)\geq 3.$ We can always replace a normal covering by one with the same number of basic components in which each basic component is a maximal subgroup.

In this paper we investigate $\gamma (G)$ where $G$ is the symmetric or  alternating group acting on the set $\Omega=\{1,\dots,n\}$. As well as proving upper and lower bounds for $\gamma(G)$ for all values of $n$, we construct
some remarkable minimal normal coverings. Our main results are given in Subsection~\ref{sub:main}.

In \cite{BBH}, the first author showed that $\gamma (S_n)=2$ if and only if $3\leq n\leq 6$ and observed that with a more tedious subcases analysis it is proved also that $\gamma (A_n)=2$ if and only if $4\leq n\leq 8.$ Details for this last result can be found in \cite{BU}.

Some of the ideas in  \cite{MA} are of interest for us, but our methods are different from those in \cite{MA}. Consider for example the case of $S_7.$ This group has no normal covering with only two basic components, but admits a
normal covering with basic set
$$
\delta=\{ S_2\times S_5,\quad S_3\times S_4,\quad AGL_1(7)\cong C_7\rtimes C_6\}
$$
consisting of maximal subgroups (for it is easily checked that each type of permutation is present in at least  one component). Thus  $\gamma(S_7)=3$ and $\delta$ generates a minimal normal covering of $S_7$. On the other hand, this normal covering has cardinality $120$, which is greater than $\sigma(S_7)=2^6$, so as a covering it is not minimal. On the other hand, a minimal covering $\Delta$  of $S_7$ is given by  the four subgroups
$$
S_2\times S_5,\quad S_3\times S_4,\quad S_6,\quad A_7
$$
together with all their conjugates in $S_7$, so that $\sigma(S_7)$ is realizable through a normal covering with four basic components.
\emph{Thus, even if a minimal covering is normal, it is not necessarily a minimal normal covering; and a minimal normal covering is not necessarily a minimal covering.}
\medskip

\subsection{An application in Galois theory}\label{sub:appn}
It is worth noting that normal coverings shed light on some questions from number theory. Let $f(x)\in \mathbb{Z}[x]$ be a polynomial which has a root $mod\,p,$ for all primes $p$ and consider its Galois group over the rationals $G=Gal_{\mathbb{Q}}(f)$ acting on the set $\Omega$ of roots of $f.$ Let $f_1(x),\dots, f_k(x)\in \mathbb{Z}[x]$ be the
distinct irreducible factors of $f(x)$ over $\mathbb{Q}$, and suppose that no $f_i$ is linear. 
Choose a root $\omega_i$ of $f_i$ for every $i=1,\dots,k$, and consider the stabilizer $G_{\omega_i}$ of $\omega_i$ in $G.$ Then by \cite[Theorem 2]{BB},
$$
G=\bigcup_{g\in G} \bigcup_{i=1}^k G_{\omega_i} ^g
$$
and since $G_{\omega_i}< G$ (as no $f_i$ is linear), we obtain a normal covering for $G$ with $k$ basic components. It follows that $k\geq \gamma (G)$. In other words, for a polynomial $f(x)$ which has a root $mod\,p,$ for all primes $p$, but no root in $\mathbb{Q}$, the number of basic components in a minimal normal covering of its Galois group is a lower bound for the number of distinct irreducible factors of $f(x)$ over $\mathbb{Q}$. In this context the significance of our results relies on the fact that the most common Galois groups are the symmetric and alternating groups (see \cite{WA}). In general when a normal covering for a group $G$ is known we may ask if $G$ is the Galois group (over the rationals) of a polynomial which has a root $mod\,p,$ for all $p$ and which splits into $\gamma(G)$ distinct irreducible factors.
This variant of the inverse problem in Galois theory has been studied by J. Sonn in \cite{SO}. Recently D. Rabayev and J. Sonn \cite{RS} proved that the minimal normal coverings of size 2 of $A_n$ and $S_n$, in \cite{BBH} and \cite{BU}, all lead to realizations of these groups as Galois groups.

\subsection{Main results}\label{sub:main}

First we state our general upper bounds. These are proved with more detailed information in Proposition~\ref{upper} and Proposition ~\ref{sym prime}.
\begin{thmA} For $G=S_n$ or $A_n$ with $n\geq 4$,
{\openup 3pt
\[
\gamma(G)\leq\left\{\begin{array}{ll}
\left\lfloor
\frac{n+4}{4}\right\rfloor & \mbox{if $G=S_n$ or $A_n$ with $n$ even}\\
\frac{n-1}{2}              & \mbox{if $G=S_n$ with $n$ odd}\\
\left\lfloor\frac{n+3}{3}\right\rfloor & \mbox{if $G=A_n$ with $n$ odd.}\\
                    \end{array}\right.
\]
}
\end{thmA}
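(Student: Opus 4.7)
The plan is to construct, in each case, an explicit basic set realising the stated cardinality. The underlying criterion is: writing $M_k := S_k \times S_{n-k}$ for the setwise stabiliser of a $k$-subset (maximal in $S_n$ when $1 \leq k < n/2$), an element $g \in S_n$ of cycle type $\lambda = (c_1, \ldots, c_m)$ lies in some $S_n$-conjugate of $M_k$ if and only if some sub-multiset of $\{c_1, \ldots, c_m\}$ sums to $k$. Since $M_k$ and $M_{n-k}$ are $S_n$-conjugate, a single choice of $M_k$ automatically covers cycle types with sub-sum $k$ or $n-k$. For $A_n$ we use $M_k \cap A_n$ (maximal in $A_n$ when $k \neq n/2$), together with the parity constraint that even permutations have an even number of even-length cycles.

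For $G = S_n$ with $n$ odd, I would take $\mathcal{B} = \{M_2, M_3, \ldots, M_{(n-1)/2}\} \cup \{T\}$, of size $(n-3)/2 + 1 = (n-1)/2$, with $T$ a suitable transitive maximal subgroup. The intransitive $M_k$'s cover every cycle type admitting a sub-sum in $\{2, \ldots, n-2\}$; an elementary case analysis shows that the only cycle types avoiding this are $(n)$ and $(n-1, 1)$, since otherwise every part would have to lie in $\{1, n-1, n\}$. For prime $n$ one takes $T = AGL_1(n)$: translations give an $n$-cycle and the cyclic stabiliser of $0$ gives an $(n-1)$-cycle fixing a point. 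For composite odd $n$ the scheme must be adjusted, typically by replacing some $M_k$ by a transitive imprimitive wreath product $S_d \wr S_{n/d}$ (with $d$ a proper divisor of $n$) that already contains $n$-cycles and many other cycle types; this more delicate construction is the content of Proposition~\ref{sym prime}.

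For $G = A_n$ with $n$ odd, the improved bound $\lfloor (n+3)/3 \rfloor$ exploits parity: since $n-1$ is even, the $(n-1, 1)$-cycles are odd permutations and lie outside $A_n$, and cycle types of $A_n$ satisfy additional divisibility constraints permitting fewer intransitive components. One uses $M_k \cap A_n$ for $k$ in a carefully chosen sparse subset of $\{2, \ldots, (n-1)/2\}$, together with a transitive component for the $n$-cycles. For $G = S_n$ or $A_n$ with $n$ even, the bound $\lfloor (n+4)/4 \rfloor$ is obtained similarly: any partition of even $n$ whose parts are all odd has an even number of parts and hence admits an even sub-sum, so $M_k$ for $k$ in an arithmetic progression of step $2$ in $\{2, \ldots, n/2\}$ (yielding roughly $n/4$ intransitive components), supplemented by at most one additional maximal subgroup (for instance a wreath product $S_d \wr S_{n/d}$) to absorb residual cycle types, suffice.

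The main obstacle is the construction of the transitive or imprimitive component $T$ covering the cycle types missed by the intransitive family --- most stubbornly the $n$-cycles and, for $S_n$ with $n$ odd, the $(n-1, 1)$-cycles. For prime $n$, $AGL_1(n)$ cleanly handles both, but for composite $n$ no single proper transitive subgroup generally contains both, and one must reshape the basic set, using for example wreath products, the alternating group $A_n$ itself as a component, or primitive subgroups classified by the O'Nan--Scott theorem. A secondary technicality is verifying that the selected subgroups are proper, maximal and pairwise non-conjugate, so that they form a genuine basic set of the claimed size.
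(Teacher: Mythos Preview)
Your overall architecture — a family of intransitive stabilisers $M_k$ together with one transitive component — is exactly the paper's approach, and your treatment of $S_p$ for $p$ prime (basic set $\{M_2,\ldots,M_{(p-1)/2},\ AGL_1(p)\}$) is correct and complete. The gaps lie in the composite cases, where the details you leave vague are in fact where the argument lives.

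The key idea you are missing is simply to start the intransitive family at $k=1$, not $k=2$. Once $M_1=S_{n-1}$ is included, the type $[1,n-1]$ is covered intransitively and the transitive component no longer needs to contain an $(n-1)$-cycle. The paper's uniform construction for composite $n$ is
\[
\delta(A)=\{A\}\cup\{M_k : 1\le k<n/2,\ p\nmid k\},
\]
with $p$ the smallest prime dividing $n$ and $A=S_p\wr S_{n/p}$. A type $[\ell_1,\ldots,\ell_r]$ not lying in any such $M_k$ must have every $\ell_i$ divisible by $p$ (otherwise some $\ell_i$ with $p\nmid\ell_i$ and $\ell_i<n/2$ exists), and every such type lies in $A$. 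A short count gives $|\delta(A)|=1+\tfrac{n}{2}(1-\tfrac{1}{p})$, which is $\lfloor (n+4)/4\rfloor$ when $p=2$ and at most $(n-1)/2$ when $p$ is odd. No reshaping via $A_n$ or the O'Nan--Scott theorem is needed.

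For even $n$ your specific argument actually fails: a partition of $n$ with all parts odd does have an even number of parts, but when that number is exactly two — e.g.\ $[1,n-1]$ or $[3,n-3]$ — the only proper sub-sums are the two parts themselves, both odd. So the family $\{M_k : k\ \text{even}\}$ misses all of these roughly $n/4$ two-part types, far more than one extra component can absorb. The correct parity is the other one: take $M_k$ for \emph{odd} $k$ (including $k=1$); then the uncovered types are precisely those with all parts even, and these all lie in $S_2\wr S_{n/2}$.

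For $A_n$ with $n$ odd the mechanism is not a ``sparse subset'' chosen via parity constraints, but the simple observation that every two-part type $[k,n-k]$ is an odd permutation (hence irrelevant for $A_n$), while every type with $r\ge 3$ parts has some part $\le n/3$. Hence
\[
\{\,M_k\cap A_n : 1\le k\le \lfloor n/3\rfloor\,\}
\]
together with one component containing an $n$-cycle (e.g.\ $[S_q\wr S_{n/q}]\cap A_n$ for composite $n$, or $AGL_1(n)\cap A_n$ for $n$ prime) is a basic set of size $\lfloor n/3\rfloor+1=\lfloor (n+3)/3\rfloor$.
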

Next we give our general lower bounds. The statement uses the function $\phi(I;n)$, for an interval $I\subseteq [0,n]$, which is defined as the number of positive integers $i\in I$ such that $(i,n)=1$ (see Lemma~\ref{asymptoticI}). We also use the notation $f\,\sim\,g$ to denote that `$f$ is asymptotic to $g$' (see vi) in Section~\ref{basic number}). These lower bounds follow from the results proved in Propositions~\ref{gamma twocycles}, \ref{even-sym}, and Corollary~\ref{cor-lower}, with reference to \cite{BBH,BU} for $n\leq8$.

\begin{thmB} Let  $n\geq 5$. Then for $G=A_n$, or for  $G=S_n$ with $n$ odd,
{\openup 2pt
\[
\gamma(G)\geq\left\{\begin{array}{ll}
\frac{\phi(n)}{2}+1 & \mbox{if $G=S_n$ with $n$ odd and composite,}\\
                    & \mbox{\ or $G=A_n$ with $n$ even and $n\ne8$}\\
\frac{\phi(n)}{2}              & \mbox{if $G=S_n$ with $n$ prime, or $G=A_8$}\\
\frac{\phi(n)+2}{4}              & \mbox{if $G=A_n$ with $n$ odd and $n\ne 2^e+1$}\\
\frac{\phi(n)}{4}              & \mbox{if $G=A_n$ with $n=2^e+1$ for some $e$}\\
                    \end{array}\right.
\]
}
while for $S_n$ with $n$ even,
$$
\gamma(S_n)\geq \frac{\phi(I(n);n)}{c(n)}+1\,\sim\,\frac{\phi(n)}{c(n)d(n)}
$$
where $I(n), c(n), d(n)$ are as in one of the lines of Table~{\rm\ref{tbl1}}.
\end{thmB}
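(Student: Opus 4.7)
The plan is to prove each lower bound by exhibiting, for the relevant case, a collection of elements of $G$ whose $G$-conjugates cannot be jointly contained in fewer than the claimed number of conjugacy classes of maximal subgroups. Since every normal covering may be replaced by one of the same size whose basic components are maximal subgroups (as noted in the Introduction), it suffices to argue in terms of maximal subgroups. The elements used are specified by their cycle type, and the crucial observation is that a suitable coprimality condition on the cycle lengths forces any containing maximal subgroup into a very restricted form.

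I would first treat the principal case: $G = S_n$ with $n$ odd composite, or $G = A_n$ with $n$ even and $n \ne 8$. For each integer $d$ with $1 \le d < n/2$ and $\gcd(d, n) = 1$, take an element $x_d$ of cycle type $(d, n-d)$. Because $\gcd(d, n-d) = 1$, the two cycle supports of $x_d$ form the unique nontrivial $\langle x_d\rangle$-invariant partition of $\{1, \dots, n\}$ into blocks of sizes dividing $n$; consequently any maximal subgroup of $G$ containing a conjugate of $x_d$ must stabilise this partition, and hence lies in the single $G$-conjugacy class of $S_d \times S_{n-d}$ (or its intersection with $A_n$). This produces $\phi(n)/2$ mutually distinct required classes. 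Adding one further class for an element with no such intransitive embedding --- an $n$-cycle when $G = S_n$ is odd composite, or a suitably chosen element whose cycles share a common factor with $n$ when $G = A_n$ is even --- gives the bound $\phi(n)/2 + 1$.

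For $G = S_n$ with $n$ prime, the sharpening comes from the fact that the normalizer $N_{S_n}(\langle \sigma \rangle) \cong AGL(1,n)$ of a cyclic group generated by an $n$-cycle $\sigma$ is a single maximal subgroup which simultaneously contains $n$-cycles and $(n-1)$-cycles; hence the classes needed for $x_1$ (type $(1, n-1)$) and for the $n$-cycle can be merged, lowering the bound to $\phi(n)/2$. An analogous collapse handles $A_8$ via the exceptional isomorphism $A_8 \cong L_4(2)$. For $G = A_n$ with $n$ odd, the elements $x_d$ are odd permutations (since the sign is $(-1)^{n-2} = -1$) and thus lie outside $A_n$; one must replace them by elements with three cycle parts summing to $n$ and of even sign. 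These triples pair up under a natural involution, halving the count and producing the coefficient $1/4$ in place of $1/2$, with a small additive correction that is present unless $n = 2^e + 1$, accounting for the two separate formulas.

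The main obstacle is the case $G = S_n$ with $n$ even. Here the element $x_d$ of cycle type $(d, n-d)$ with $\gcd(d, n) = 1$ can be contained in maximal subgroups beyond the class of $S_d \times S_{n-d}$, notably in imprimitive wreath products $S_m \wr S_{n/m}$ and in certain primitive groups made possible by the nontrivial divisor structure of $n$. One therefore restricts $d$ to an interval $I(n) \subseteq [0, n]$ chosen so as to exclude those values for which $x_d$ admits alternative maximal embeddings, and introduces correction factors $c(n)$ and $d(n)$ (depending on the prime factorisation of $n$) that account for the residual multiplicities, as tabulated in Table~\ref{tbl1}. The lower bound $\phi(I(n); n)/c(n) + 1$ is then derived by applying the intransitive-partition argument to this restricted set of $x_d$, and the asymptotic equivalence $\sim \phi(n)/(c(n) d(n))$ follows from classical estimates for the number of integers in an interval coprime to $n$ (Lemma~\ref{asymptoticI}).
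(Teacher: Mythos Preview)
Your outline captures the right spirit for the cases $S_n$ ($n$ odd) and $A_n$ ($n$ even), but there are genuine gaps, and your approach to $S_n$ with $n$ even is fundamentally mistaken.

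\textbf{The two-cycle argument and primitive subgroups.} Your justification that a maximal subgroup containing $x_d$ must be intransitive is incomplete: the invariant-partition reasoning you give handles only imprimitive transitive subgroups. Primitive maximal subgroups are not excluded by any block-system argument. The paper's Lemma~\ref{two cycles} deals with this by observing that $x_d^{\,n-d}$ is a $d$-cycle, and then invoking Wielandt's theorem (Lemma~\ref{WI 13.5}) to force $H\geq A_n$. That argument requires $2\le d<n/2$, so the paper works with $\mathcal U_n=\{2\le k<n/2:(k,n)=1\}$, which has only $\phi(n)/2-1$ elements. Including $d=1$ as you do is not legitimate: for instance $M_{12}<A_{12}$ contains type $[1,11]$, and $AGL_2(3)<S_9$ contains type $[1,8]$.

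\textbf{The extra ``$+1$''.} Because of the above, the two-cycle types yield only $\phi(n)/2-1$ forced intransitive components. Getting to $\phi(n)/2+1$ needs an additional argument: one must show that no single further maximal subgroup can simultaneously contain the types $[n]$, $[1,n-1]$ and (say) $[p,\dots,p,2p]$. The paper does this in Proposition~\ref{gamma twocycles}, using the CFSG-based classification of primitive groups containing an $n$-cycle (Lemma~\ref{Feit}) together with Lemma~\ref{nminusone} and Lemma~\ref{p-permutations}. Your ``adding one further class for an $n$-cycle'' skips this entirely.

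\textbf{The even symmetric case.} Here your plan fails outright. When $n$ is even and $(d,n)=1$, both $d$ and $n-d$ are odd, so $x_d$ is an \emph{even} permutation. Hence every $x_d$ lies in the single maximal subgroup $A_n$, and restricting $d$ to an interval cannot salvage the argument (nor are imprimitive wreath products an issue, since $(d,n-d)=1$ already rules those out). The paper abandons two-cycle types for this case and instead uses carefully chosen three-part types that are \emph{odd} permutations: the families $\mathcal T=\{[i,(a(n)-1)i,\,n-a(n)i]\}$ when $3\nmid n$, and $\mathcal T'=\{[m-i,m-2i,m+3i]\}$ when $6\mid n$ (with $n=3m$). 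Lemma~\ref{three cycles} pins these to intransitive maximals, and the counting in Proposition~\ref{even-sym} (each intransitive component absorbs at most $c(n)$ such types) produces the bound $\phi(I(n);n)/c(n)+1$. The same three-cycle mechanism, with $a(n)=2$, is what underlies the $A_n$ odd case and explains the factor $1/4$; the ``natural involution'' you allude to is really the observation that a given $[S_c\times S_{n-c}]\cap A_n$ contains at most two of the types $[i,i,n-2i]$.
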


{\openup 2pt
\begin{center}
 \begin{table}[H]
\begin{tabular}{llll}\hline
$n$ even&$I(n)$&$c(n)$&$d(n)$\\ \hline \hline
$3\,\nmid\,n$&$[1,\frac{n-1}{3})$&2&3\\
$n\equiv 3$ or $6\pmod{9}$&$[1,\frac{n}{9})$&1&9\\
$9\,\mid \,n$&$[\frac{n}{18},\frac{n}{6})$&1&9\\  \hline
\end{tabular}
\caption{Interval $I(n)$ and constants $c(n), d(n)$ for Theorem~B}\label{tbl1}
 \end{table}
\end{center}
}

We note that these two theorems show that there is a function $f(n)$, namely the minimum of the various functions occurring in Theorems A and B, such that $f(n)\,\sim\,\phi(n)$, and such that
\[
\frac{f(n)}{9}\leq \gamma(S_n)\leq\frac{n-1}{2}, \quad\mbox{and}\quad \frac{f(n)}{4}\leq \gamma(A_n)\leq \frac{n+3}{3}.
\]
Then, since $\ \displaystyle{\lim_{n\rightarrow +\infty}\phi(n)=+\infty}$, we see that the gamma functions are unbounded. Moreover, surprisingly, also the difference between $\gamma(S_n)$ and $\gamma(A_n)$ is unbounded (see Corollary \ref{distance}).

\begin{corC}\label{limits}\begin{description}\item[(a)] $ \displaystyle{\lim_{n\rightarrow +\infty}\gamma(S_n)}= \displaystyle{\lim_{n\rightarrow +\infty}\gamma(A_n)=+\infty}.$\\ In particular, for any $k\in \mathbf{N}$ with $k\geq 2$, and $G=S_n,\ A_n,$ the equation $\gamma(G)=k$ has only a finite number of solutions.
\item[(b)] $\displaystyle{\limsup_{n\rightarrow +\infty}\, [\gamma(S_n)-\gamma(A_n)]=+\infty.}$
\end{description}
\end{corC}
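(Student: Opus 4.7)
The plan is to deduce both parts directly from Theorems~A and~B together with the classical fact that $\phi(n)\to+\infty$ as $n\to+\infty$ (so also $f(n)\to+\infty$, where $f$ is the asymptotic lower envelope introduced in the remark immediately preceding the corollary). No serious obstacle arises: both halves reduce to a judicious application of bounds already in hand, and the only input from outside the paper is the elementary unboundedness of the Euler totient (e.g.\ from $\phi(n)\geq\sqrt{n/2}$ for $n\geq 2$, or directly from the fact that $\phi(n)\leq C$ admits only finitely many $n$).

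For part~(a), I simply invoke the two-sided estimates $f(n)/9\leq\gamma(S_n)$ and $f(n)/4\leq\gamma(A_n)$ displayed just before the statement of the corollary. Since $f(n)\sim\phi(n)\to+\infty$, both $\gamma(S_n)$ and $\gamma(A_n)$ tend to $+\infty$. The ``in particular'' clause is then an immediate reformulation: for each fixed $k\geq 2$, the equation $\gamma(G)=k$ forces $f(n)\leq 9k$ (respectively $f(n)\leq 4k$), a condition satisfied by only finitely many $n$.

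For part~(b) the strategy is to exhibit an explicit infinite sequence of integers along which $\gamma(S_n)-\gamma(A_n)$ diverges. The natural choice is to let $n$ range over the odd primes, which form an infinite set. For such $n$, the second line of Theorem~B gives the lower bound $\gamma(S_n)\geq\phi(n)/2=(n-1)/2$, while the third line of Theorem~A gives the upper bound $\gamma(A_n)\leq\lfloor(n+3)/3\rfloor\leq(n+3)/3$. Subtracting,
\[
\gamma(S_n)-\gamma(A_n)\;\geq\;\frac{n-1}{2}-\frac{n+3}{3}\;=\;\frac{n-9}{6},
\]
which tends to $+\infty$ as the odd prime $n$ grows. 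Hence $\limsup_{n\to+\infty}[\gamma(S_n)-\gamma(A_n)]=+\infty$, as required.
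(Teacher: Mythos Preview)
Your proof is correct and follows essentially the same route as the paper. For part~(a) the paper uses exactly the displayed bounds $f(n)/9\leq\gamma(S_n)$ and $f(n)/4\leq\gamma(A_n)$ together with $\phi(n)\to+\infty$; for part~(b) the paper (in Corollary~\ref{distance}) also restricts to primes $p\geq 5$, invokes $\gamma(S_p)=(p-1)/2$ and $\gamma(A_p)\leq (p+3)/3$, and obtains the identical lower bound $(p-9)/6$.
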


Finally we summarise our results giving considerable improvements to the general upper bounds, and in many cases exact values, for the $\gamma$-function when $n$ has at most two prime divisors. These results follow from  Proposition~\ref{sym prime} (for $S_p$), Proposition~\ref{prime power} (for $n=p^\alpha\geq p^2$), Proposition~\ref{two primes} (for $n=pq$), and Proposition~\ref{two primes power}.

\begin{thmD} 
Let $p, q$ be primes such that $2\leq p<q$, let $\alpha, \beta$ be positive integers, let $n$ and $\delta(n)$ satisfy one of the lines of Table ~{\rm \ref{tb1:delta}}, and let  $G=A_n$ or $S_n$. Then
\[
\gamma(G)\leq \frac{\phi(n)}{2} +\delta(n)
\]
and equality holds for $G=S_n$ with $n$ odd, and for $G=A_n$ with $n$ even.
{\openup 2pt
\begin{center}
 \begin{table}[H]
\begin{tabular}{lll}\hline
$n$&$\delta(n)$&Conditions on $p,\alpha,\beta,G$\\ \hline
$p$& $0$         &$p\geq5,\ G=S_p$\\ 
$p^\alpha$&$1$&$p$ odd, $\alpha\geq2,\ G=S_{p^{\alpha}};$ $p=2$,
$\alpha\geq4$ \\
$pq$ & $1$&\\
$p^\alpha q^\beta$&$2$&$\alpha+\beta\geq3,\ \alpha\geq1,\ \beta\geq 1$\\  \hline
\end{tabular}
\caption{Values of $n$ and $\delta(n)$ for Theorem D.}\label{tb1:delta}
 \end{table}
\end{center}
}
\end{thmD}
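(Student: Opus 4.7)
The plan is to combine the lower bounds of Theorem~B with matching upper bounds, obtained case by case by exhibiting an explicit basic set of the prescribed size. Since Theorem~B yields $\gamma(S_p)\ge\phi(p)/2$ for $p$ prime and $\gamma(G)\ge\phi(n)/2+1$ whenever $G=S_n$ with $n$ odd composite or $G=A_n$ with $n$ even and $n\ne 8$, the equality statements follow automatically as soon as the upper bounds are in place; so all the work is in constructing the basic sets.

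In each of the four lines of Table~\ref{tb1:delta} the basic set is assembled from three families of maximal subgroups: an \emph{affine} subgroup $AGL_1(p)\cong C_p\rtimes C_{p-1}$ (or an analogue $AGL_d(q)$ when $n=q^d$), whose conjugates in $S_n$ capture all $n$-cycles together with the elements of cycle type $(d^{(n-1)/d},1)$ for every divisor $d$ of $n-1$; \emph{intransitive} Young subgroups $S_k\times S_{n-k}$ for carefully chosen indices $k\le\lfloor n/2\rfloor$, each catching a permutation whenever its cycle lengths admit a subset summing to $k$; and, when $n$ is composite, \emph{imprimitive} wreath products $S_m\wr S_{n/m}$ for proper divisors $m>1$ of $n$, catching permutations that preserve the associated block system. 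For the $A_n$ cases one intersects the chosen $S_n$-maximal subgroups with $A_n$ (most such intersections remain maximal in $A_n$), and occasionally $A_n$ itself is a useful component of an $S_n$-basic set.

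The four cases are then disposed of as follows. For $G=S_p$ with $p\ge 5$ prime ($\delta=0$): take $AGL_1(p)$ together with $(p-3)/2$ Young subgroups $S_{k_i}\times S_{p-k_i}$, choosing the $k_i$ by a subset-sum argument so that every partition $\lambda\vdash p$ not of the form $(p)$ or $(d^{(p-1)/d},1)$ has a subset of its parts summing to some $k_i$. For $n=p^\alpha$ with $\alpha\ge 2$ ($\delta=1$): adjoin a single imprimitive component, typically $S_p\wr S_{p^{\alpha-1}}$, to catch cycle types (for instance $(m^{n/m})$) that are otherwise missed. For $n=pq$ ($\delta=1$): use one imprimitive component (say $S_p\wr S_q$) together with the appropriate intransitive subgroups. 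For $n=p^\alpha q^\beta$ with $\alpha+\beta\ge 3$ ($\delta=2$): a second extra maximal subgroup, typically either a further wreath product or a primitive affine subgroup, is needed to handle the richer partition structure. Throughout, the main obstacle is the combinatorial verification that each cycle type is captured by a listed conjugate: the delicate partitions are those supported on a single block size, or consisting only of ``large'' parts, which evade both the intransitive Young and primitive affine components, and it is precisely the need to cover these that forces the wreath or affine additions and thereby fixes the value of $\delta(n)$.
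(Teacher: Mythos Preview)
Your overall architecture is right for the first three rows of the table, and the basic sets you sketch are essentially the ones in the paper: $AGL_1(p)$ plus all $S_k\times S_{p-k}$ with $2\le k\le(p-1)/2$ for $S_p$; the wreath product $S_p\wr S_{p^{\alpha-1}}$ plus all $S_k\times S_{n-k}$ with $p\nmid k$ for $n=p^\alpha$; and one wreath product plus the intransitives coprime to $pq$ for $n=pq$. (Note that the ``carefully chosen indices'' are in fact forced: the $2$-cycle types $[k,n-k]$ with $(k,n)=1$ each lie in a \emph{unique} maximal intransitive subgroup, so you must take every such $k$. The count comes out to $\phi(n)/2$ or $\phi(n)/2-1$, not from a subset-sum optimisation.) A minor point: there is no affine group in degree $n$ when $n$ is not a prime power, so your suggested ``primitive affine subgroup'' alternative for the fourth row does not exist; the paper simply uses two wreath products $S_p\wr S_{n/p}$ and $S_q\wr S_{n/q}$.

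The genuine gap is in the equality claim for the fourth row, $n=p^\alpha q^\beta$ with $\alpha+\beta\ge3$ and $\delta(n)=2$. Theorem~B gives only $\gamma(G)\ge\phi(n)/2+1$ in these cases, not $\phi(n)/2+2$, so the equalities do \emph{not} follow automatically once the upper bound is in place. Ruling out $\gamma(G)=\phi(n)/2+1$ is the hardest part of the whole theorem and occupies most of the paper's Proposition~\ref{two primes power}: one supposes a minimal basic set $\delta'$ has only two members $M,K$ beyond the mandatory intransitives, and then shows by a lengthy case analysis (using the classification of $2$-transitive groups containing $n$- and $(n-1)$-cycles, the Liebeck--Saxl minimal degree bounds, and the specific types $[p,\dots,p,2p]$, $[2,\dots,2,4]$, $[2,\dots,2,6]$, $[aq,n-aq]$ for suitable $a$) that no such pair $M,K$ can absorb all the remaining types. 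Your proposal needs to acknowledge and carry out this step; as written it asserts something false.
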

Propositions~\ref{sym prime} and~\ref{prime power} also give the same upper bounds for $\gamma(A_n)$ as for $\gamma(S_n)$, in the case where $n=p^\alpha\geq p$ is odd. However in these cases the upper bounds are weaker than the general upper bounds in Theorem A, so we do not include them in the statement of Theorem D. For the other odd values of $n$, namely if $n=p^\alpha q^\beta$ is odd with both $\alpha\geq 1,\beta\geq1$, then the upper bound for $\gamma(A_n)$ given by Theorem D is better when $p^\alpha=3$ (for any $q^\beta$) and, for example,
when $n=35$, and in most other cases Theorem A gives a better upper bound.

These results, together with explicit additional information in \cite{BBH, BU} (for $n\leq8$), Examples~\ref{ex:a9}, \ref{10}, Lemma~\ref{ex:a11}, and Corollary~\ref{gamma S12}, give the exact values of the $\gamma$-function for small $n$, summarised in Table~\ref{tbl:exact}.
{\openup 2pt
\begin{center}
 \begin{table}[H]
\begin{tabular}{l|llllllllll}\hline
$n$&3&4&5&6&7&8&9&10&11&12\\ \hline
$\gamma(S_n)$&2&2&2&2&3&3&4&3&   5&4\\ 
$\gamma(A_n)$&-&2&2&2&2&2&3&3&4&4\\  \hline
\end{tabular}
\caption{Values of $\gamma(S_n)$ and $\gamma(A_n)$ for small $n$.}\label{tbl:exact}
 \end{table}
\end{center}
}

In particular, we observe that
 $\gamma(S_n)$ is not strictly increasing with $n$:
$$
\gamma(S_9)=4>\gamma(S_{10})=3.
$$
Moreover $\gamma(S_n)$ can ``jump":
$$
\gamma(S_{10})=3,\quad \gamma(S_{11})=5.
$$

Many aspects of the behavior of the $\gamma$-functions remain obscure. It would be good to know, for example, the exact value of $\gamma(A_p)$ for primes $p$. We do not know whether or not $\gamma(S_n)$ and $\gamma(A_n)$ are surjective or if there exists $n\in \mathbf N$ such that $\gamma(S_n)<\gamma(A_n).$
\medskip

\section{Number theoretic preliminaries}\label{basic number}
Here we present some results from number theory which will be used to prove the asymptotic estimates for $\gamma(S_n)$ and $\gamma(A_n).$
 For these topics
the general reference is the book of H. Shapiro \cite{SH}. We recall the essential definitions and properties for the convenience of the reader. We are indebted to F. Luca for most of the ideas of the proof of Lemma \ref{asymptoticI}.

As usual, we denote by:
\begin{itemize}\item[i)] $(a,b)$ the greatest common divisor of the integers $a$ and $b$;
\item[ii)] $\phi:\mathbf N^*\rightarrow \mathbf N^*$ the Euler function given by
$$
\phi(n)=|\{k\in \mathbf N\quad:\quad (k,n)=1, \quad 1\leq k\leq n\}|
$$
where $\mathbf N$ denotes the set of non-negative integers and $\mathbf N^* = \mathbf N\setminus\{0\}$;
\item[iii)]  $\nu:\mathbf N^*\rightarrow \mathbf N$ the function which gives the number of distinct prime divisors of $n$. We need the following fact:
\begin{equation}\label{nu}
|\{d\in \mathbf N\ :\ d\mid n,\  d\ \mbox{ square free}\}|=2^{\nu(n)}.
\end{equation}

\item[iv)]   $\mu:\mathbf N^*\rightarrow \{-1,0,1\}$ the M\"{o}bius function given by $\mu(1)=1$ and \[
\mu\left( n\right) =\left\{
\begin{array}{lll}
0 &\quad \quad &\mbox { if $n$ is not square free}\\
\\
(-1)^{\nu \left( n\right)}& \quad \quad &\mbox{ if $n$ is square free}
\end{array}%
\right.
\]%
We will use the relation
\begin{equation}\label{mu}
\sum _{d\mid n}\mu(d)=  \left\{
\begin{array}{lll}
1 &\quad \quad &\mbox { if $n=1$ }
\\
0 & \quad \quad &\mbox{ if $n>1$ }
\end{array}%
\right.
\end{equation}
 and also the link between the Euler function and the   M\"{o}bius function given by    \begin{equation}\label{phi}\displaystyle{\sum _{d\mid n}\frac{\mu(d)}{d}=\frac{\phi(n)}{n} }.
 \end{equation}

 \end{itemize}

For $f,\ g$ real functions defined over an upper unbounded domain, we write:
\begin{itemize}

\item[v)] $f\sim g$ if $\displaystyle{\lim_{x\rightarrow +\infty}\frac{f(x)}{g(x)}} = 1,$ and in this case we say that ``$f$ is asymptotic to $g$".
\end{itemize}

\begin{lemma}\label{asymptotic} For any $\alpha\in[0,1)$, \quad $\displaystyle{\lim_{n\rightarrow +\infty}\frac{2^{\nu(n)}n^{\alpha}}{\phi(n)}}=0.$

\end{lemma}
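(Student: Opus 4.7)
The plan is to compare $2^{\nu(n)}\,n^\alpha/\phi(n)$ to a simple power of $n$ by means of two standard estimates, one for $\phi$ and one for the function $c^{\nu(n)}$. Using the identity $\phi(n)=n\prod_{p\mid n}(1-1/p)$, I first rewrite
$$
\frac{2^{\nu(n)}\,n^\alpha}{\phi(n)}=\frac{1}{n^{1-\alpha}}\,\prod_{p\mid n}\frac{2p}{p-1}.
$$
Since $\frac{2p}{p-1}\leq 4$ for every prime $p$ (with equality only at $p=2$), this gives the crude majorisation
$$
\frac{2^{\nu(n)}\,n^\alpha}{\phi(n)}\leq \frac{4^{\nu(n)}}{n^{1-\alpha}},
$$
and the whole question reduces to showing that $4^{\nu(n)}=o(n^{1-\alpha})$.

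The heart of the argument is the following elementary estimate: for any $c>1$ and any $\epsilon>0$ there is a constant $C_\epsilon$ such that $c^{\nu(n)}\leq C_\epsilon\,n^\epsilon$ for every $n\geq 1$. To see this I pick the smallest prime $q$ with $q>c^{1/\epsilon}$, so that $c\leq p^\epsilon$ for every prime $p\geq q$. Splitting the product $c^{\nu(n)}=\prod_{p\mid n} c$ at the threshold $q$, the primes $p<q$ contribute at most $c^{\pi(q)}$, while the primes $p\geq q$ dividing $n$ contribute at most $\prod_{p\mid n,\,p\geq q} p^\epsilon\leq n^\epsilon$. Hence $C_\epsilon=c^{\pi(q)}$ works.

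Applying this with $c=4$ and $\epsilon=(1-\alpha)/2>0$ (which is positive since $\alpha\in[0,1)$) yields $4^{\nu(n)}\leq C\,n^{(1-\alpha)/2}$, and therefore
$$
\frac{2^{\nu(n)}\,n^\alpha}{\phi(n)}\leq \frac{C}{n^{(1-\alpha)/2}}\longrightarrow 0,
$$
which proves the lemma. The only conceptual obstacle is the bound $c^{\nu(n)}=O_\epsilon(n^\epsilon)$; the pigeonhole splitting above handles it without any analytic machinery, but one could equally well invoke $2^{\nu(n)}\leq\tau(n)$ together with the classical divisor bound $\tau(n)=o_\epsilon(n^\epsilon)$ proved in Shapiro's book.
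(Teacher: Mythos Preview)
Your proof is correct. It differs from the paper's in its ingredients: the paper quotes two ready-made estimates from Shapiro, namely $\nu(n)\ll \log n/\log\log n$ and $\phi(n)\gg n/\log\log n$, and combines them to bound the ratio by $(\log\log n)\,n^{c/\log\log n+\alpha-1}$. You instead rewrite the ratio via the Euler product, absorb the factor $2^{\nu(n)}/\phi(n)$ into $4^{\nu(n)}/n$, and then prove the elementary bound $c^{\nu(n)}=O_\epsilon(n^\epsilon)$ by splitting the prime divisors of $n$ at a fixed threshold. Your route is fully self-contained and avoids any appeal to outside asymptotics; the paper's route gives quantitatively sharper control (a $\log\log n$ loss rather than a polynomial one) but at the cost of citing results whose proofs are comparable in difficulty to yours. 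Either argument suffices for the lemma as stated, and your threshold-splitting is a nice illustration of how $c^{\nu(n)}=O_\epsilon(n^\epsilon)$ can be obtained without passing through the divisor function.
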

\begin{proof} In \cite[p. 347]{SH} we find
$$
\nu(n)\leq (c_4+1)\displaystyle{\frac{\log\,n}{\log(\log\,n)}}
$$
where $c_4>0$ is a constant and on \cite[p. 341]{SH} we find
$$
\phi(n)>c\displaystyle{\frac{n}{\log(\log\,n)}}
$$
for all $n\geq 3$, where $c>0$ is a constant. It follows that
$$
\frac{2^{\nu(n)}n^{\alpha}}{\phi(n)}\leq
\frac{\log(\log\,n)}{c}\,n^{\mbox{\small{$\frac{c_4+1}{\log_{_{2}}e}\frac{1}{\log(\log\,n)}+\alpha-1$}}}
$$
and with the substitution $m=\log(\log\,n),$ it is immediately seen that for $\alpha\in[0,1),$
$$
\log(\log\,n)\,n^{\mbox{\small{$\frac{c_1}{\log(\log\,n)}+\alpha-1$}}}
$$
tends to $0$ when $n$ goes to $+\infty,$ for any constant $c_1>0.$\\

\end{proof}

\begin{lemma}\label{sym} If $f_1\sim g_1,\ f_2\sim g_2$ and if the limit $\displaystyle{\lim _{x\rightarrow +\infty}\frac{g_1}{g_2}}$ exists and is not equal to $-1,$ then $f_1+f_2\sim g_1+g_2$.
\end{lemma}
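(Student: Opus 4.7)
The plan is to reduce everything to a direct limit computation and split according to whether $L:=\lim_{x\to+\infty} g_1/g_2$ is finite or infinite. First, note that because $f_i\sim g_i$ requires $g_i$ to be eventually nonzero, and because $L\neq -1$, the sum $g_1+g_2$ is also eventually nonzero: if $L$ is finite, $g_1/g_2+1\to L+1\neq 0$; if $L=\pm\infty$, then $1+g_2/g_1\to 1$. So the quotient $(f_1+f_2)/(g_1+g_2)$ is well-defined for sufficiently large $x$, and our task reduces to showing it tends to $1$.

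In the finite case, I would divide numerator and denominator by $g_2$ to write
\[
\frac{f_1+f_2}{g_1+g_2}=\frac{\dfrac{f_1}{g_1}\cdot\dfrac{g_1}{g_2}+\dfrac{f_2}{g_2}}{\dfrac{g_1}{g_2}+1}.
\]
By hypothesis $f_1/g_1\to 1$, $f_2/g_2\to 1$, and $g_1/g_2\to L$, so the numerator tends to $L+1$ and the denominator tends to $L+1$, giving limit $1$ as required.

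In the infinite case $L=\pm\infty$, I would instead divide by $g_1$ and obtain
\[
\frac{f_1+f_2}{g_1+g_2}=\frac{\dfrac{f_1}{g_1}+\dfrac{f_2}{g_2}\cdot\dfrac{g_2}{g_1}}{1+\dfrac{g_2}{g_1}},
\]
where now $g_2/g_1\to 0$, so both numerator and denominator tend to $1$, again giving limit $1$.

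The only delicate point is the exclusion $L\neq -1$, which is exactly what prevents $g_1+g_2$ from being asymptotic to $0$ relative to $g_2$ and thus what allows the division step to go through; no serious obstacle arises beyond bookkeeping of these two cases.
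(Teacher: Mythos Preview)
Your proof is correct and essentially follows the same strategy as the paper: reduce to a direct limit computation using the known limits $f_i/g_i\to 1$ and $g_1/g_2\to L$. The only cosmetic difference is that the paper uses the symmetric decomposition
\[
\frac{f_1+f_2}{g_1+g_2}=\frac{f_1}{g_1}\cdot\frac{1}{1+g_2/g_1}+\frac{f_2}{g_2}\cdot\frac{1}{1+g_1/g_2},
\]
which handles all values of $L$ (including $0$ and $\pm\infty$) in a single formula, whereas you divide through by $g_2$ or $g_1$ according to whether $L$ is finite or infinite; your explicit case split is arguably cleaner when $L\in\{0,\pm\infty\}$, since the paper's expression $\frac{1}{1+\ell^{-1}}$ then requires interpretation.
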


\begin{proof} Set $\ell:= \lim _{x\rightarrow +\infty}\frac{g_1}{g_2}$ and note that $\ell\ne -1$. Now
\[
\frac{f_1+f_2}{g_1+g_2}= \frac{f_1}{g_1}\cdot \frac{1}{1+g_2/g_1} + \frac{f_2}{g_2}\cdot \frac{1}{1+g_1/g_2}
\]
and since by assumption the limits, as $x\rightarrow +\infty$, of  $\frac{f_1}{g_1}, \frac{f_2}{g_2}, \frac{1}{1+g_2/g_1}, \frac{1}{1+g_1/g_2}$ all exist and are finite, it follows that
$\lim_{x\rightarrow +\infty} \frac{f_1+f_2}{g_1+g_2}$ exists and is equal to
\[
({\lim_{x\rightarrow +\infty} \frac{f_1}{g_1}})\cdot \frac{1}{1+\ell^{-1}} + (\lim_{x\rightarrow +\infty} \frac{f_2}{g_2})\cdot \frac{1}{1+\ell} = 1
\]
and hence $f_1+f_2 \sim g_1+g_2$.
\end{proof}

\begin{lemma}\label{asymptoticI} Let $n\in \mathbf{N}^*$ and let $0\leq x<y\leq n$ with $x,y\in \mathbf{R}$. For any interval $I$ with extremes $x$ and $y$, define
$$
\phi(I;n)=|\{ i\in \mathbf{N}^*\ :\ i\in I,\ (i,n)=1  \}|.
$$
Then $$|\phi(I;n)-\displaystyle{\frac{\phi(n)}{n}}(y-x)|\leq 2^{\nu(n)+1}
$$
with $\nu(n)$ as in iii) above. Moreover, if $y-x\sim c\,n^{\beta}$ for some $\beta\in (0,1]$ and $c>0$, then
$
\phi(I;n)\sim \displaystyle{\frac{\phi(n)}{n}}(y-x).
$
\end{lemma}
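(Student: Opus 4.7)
The plan is to use Möbius inversion on the coprimality condition. For each divisor $d\mid n$, set $N_d:=|\{i\in I\cap\mathbf N^*\ :\ d\mid i\}|$. Applying (\ref{mu}) to $(i,n)$ and swapping the order of summation gives
$$
\phi(I;n)=\sum_{i\in I\cap\mathbf N^*}\sum_{d\mid (i,n)}\mu(d)=\sum_{d\mid n}\mu(d)\,N_d.
$$

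The first step is to estimate each $N_d$. Since $I$ is an interval with extremes $x,y$, the number of integer multiples of $d$ lying in $I$ differs from $(y-x)/d$ by at most $1$ (boundary contribution of the two endpoints). Excluding $i=0$, which is a multiple of every $d$ and might belong to $I$ in case $x=0$, costs at most another $1$. Hence $|N_d-(y-x)/d|\leq 2$ for every $d\mid n$. Using (\ref{phi}) I would split
$$
\phi(I;n)=(y-x)\sum_{d\mid n}\frac{\mu(d)}{d}+\sum_{d\mid n}\mu(d)\Bigl(N_d-\frac{y-x}{d}\Bigr)=\frac{\phi(n)}{n}(y-x)+E.
$$
Since $\mu(d)=0$ unless $d$ is square free, the error $E$ is supported on the $2^{\nu(n)}$ square-free divisors of $n$ (by (\ref{nu})), each contributing at most $2$ in absolute value. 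Therefore $|E|\leq 2\cdot 2^{\nu(n)}=2^{\nu(n)+1}$, which is the first inequality of the lemma.

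For the asymptotic part, assume $y-x\sim c\,n^{\beta}$ with $\beta\in(0,1]$ and $c>0$. Dividing the inequality just proved by $\frac{\phi(n)}{n}(y-x)$, it is enough to show that
$$
\frac{2^{\nu(n)+1}\,n}{\phi(n)\,(y-x)}=\frac{2\cdot 2^{\nu(n)}\,n^{1-\beta}}{\phi(n)}\cdot\frac{n^{\beta}}{y-x}\longrightarrow 0.
$$
The second factor tends to $1/c$ by hypothesis, while the first tends to $0$ by Lemma~\ref{asymptotic} applied with $\alpha=1-\beta\in[0,1)$. The only step requiring care is the $\pm 2$ bookkeeping for $N_d$ (two endpoints plus the possible presence of $0$); the rest is the standard sieve identity combined with the number-theoretic estimates already available.
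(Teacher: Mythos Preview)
Your proof is correct and follows essentially the same approach as the paper: M\"{o}bius inversion on the coprimality condition, the identity $\sum_{d\mid n}\mu(d)/d=\phi(n)/n$ for the main term, and restriction of the error to the $2^{\nu(n)}$ square-free divisors of $n$, each contributing at most $2$. The asymptotic part is handled identically, factoring the relative error as $\frac{2^{\nu(n)+1}n^{1-\beta}}{\phi(n)}\cdot\frac{n^{\beta}}{y-x}$ and invoking Lemma~\ref{asymptotic} with $\alpha=1-\beta$; your treatment of the $\pm 2$ bookkeeping (endpoints plus the possible exclusion of $0$) is in fact slightly more explicit than the paper's.
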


\begin{proof} The relation \ref{mu} implies that
$$
\phi(I;n)=\sum_{i\in I}  \sum_{d\mid (n,i)}\mu(d)=\sum_{d\mid n}\mu(d)\sum_{\mbox{\tiny{$\begin{array}{c}
i\in I\\
d\mid i
\end{array}$%
}}}1.
$$
Now
$$
\displaystyle{\sum_{\mbox{\tiny{$\begin{array}{c}
i\in I\\
d\mid i
\end{array}$%
}}}1=\lfloor\frac{y-x}{d}\rfloor+\delta_d= \frac{y-x}{d}-\theta_{n,d}+\delta_d,}
$$
for some $\delta_d\in\{-1,1,0\}$ and $0\leq \theta_{n,d}<1.$
Thus, by the equality \ref{nu}, we get
\begin{eqnarray*}
\phi(I;n)&=&\sum_{d\mid n}\mu(d)\frac{y-x}{d}+\sum_{d\mid n}\mu(d)(\delta_d-\theta_{n,d})\\
&=&\frac{\phi(n)}{n}(y-x)+\sum_{\mbox{\tiny{$\begin{array}{c}
d\mid n\\
d\  \mbox{square free}
\end{array}$%
}}}\mu(d)(\delta_d-\theta_{n,d}).
\end{eqnarray*}
It follows, by the equation \ref{phi}, that
$$
|\phi(I;n)-\frac{\phi(n)}{n}(y-x)|\leq \sum_{\mbox{\tiny{$\begin{array}{c}
d\mid n\\
d\  \mbox{square free}
\end{array}$%
}}}|\delta_d|+|\theta_{n,d}|<\sum_{\mbox{\tiny{$\begin{array}{c}
d\mid n\\
d\  \mbox{square free}
\end{array}$%
}}}2=2^{\nu(n)+1}.
$$
If $y-x\sim c\,n^{\beta}$ for some $\beta\in (0,1]$ and $c>0$, then
$$
\left|\frac{\phi(I;n)}{\frac{\phi(n)(y-x)}{n}}-1\right|\leq
\frac{2^{\nu(n)+1}n}{\phi(n)(y-x)}= \frac{2^{\nu(n)+1}\,n^{1-\beta}}{\phi(n)}\cdot\frac{n^{\beta}}{y-x}.
$$
By Lemma \ref{asymptotic},
$\displaystyle{\lim_{n\rightarrow +\infty}\frac{2^{\nu(n)+1}\,n^{1-\beta}}{\phi(n)}}=0$ and, by assumption, $\displaystyle{\lim_{n\rightarrow +\infty}\frac{n^{\beta}}{y-x}=1/c},$ so that
$\displaystyle{\lim_{n\rightarrow +\infty}\frac{2^{\nu(n)+1}\,n^{1-\beta}}{\phi(n)}\frac{n^{\beta}}{y-x}}=0$ and therefore
$
\phi(I;n)\sim \displaystyle{\frac{\phi(n)}{n}}(y-x).
$\end{proof}

 We close this section with some arithmetical lemmas.

 \begin{lemma}\label{min-prime-odd} Let $n\in\mathbb{N}$ with $n$ odd and $n\geq3$, and let $p$ be its smallest prime divisor.
 \begin{description}\item[(a)] If $n$ is not a prime, the square of a prime or the product of twin primes, then $ p < \sqrt{n} -1;$
 \item[(b)] If $n$ is a prime, the square of a prime or the product of twin primes, then $ p>  \sqrt{n} -1.$
 \end{description}
 \end{lemma}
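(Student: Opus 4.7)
The plan is to translate both inequalities into statements about squares: since both sides are positive, $p < \sqrt{n}-1$ is equivalent to $(p+1)^2 < n$, and likewise $p > \sqrt{n}-1$ is equivalent to $(p+1)^2 > n$. This reformulation kills the square roots and reduces everything to polynomial comparisons involving $p$ and the factorization of $n$.

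For part (b), each of the three listed types gives a direct one-line check. If $n=p$ is prime, then $(p+1)^2 = p^2 + 2p + 1 > p = n$. If $n = p^2$, then $(p+1)^2 = p^2 + 2p + 1 > p^2 = n$. If $n = p(p+2)$ with $p,p+2$ twin primes, then $p$ is still the smallest prime divisor and $(p+1)^2 = p^2 + 2p + 1 > p^2 + 2p = n$. Each verification is immediate.

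For part (a), I would organize the complement into three structural cases. First, $n = p^k$ with $k \geq 3$ (since $k=1,2$ fall into case (b)). Second, $n = p^a q^b$ with $p<q$ distinct primes, subdivided into either $(a,b)=(1,1)$ with $q \geq p+4$ (the twin-prime case is excluded and, since $p,q$ are both odd, the gap $q-p$ is even, forcing $q-p \geq 4$), or $\max(a,b)\geq 2$. Third, $n$ has at least three distinct prime divisors. In each case I would produce an easy lower bound on $n$ (respectively $n \geq p^3$; $n \geq p(p+4)$, $n \geq p^2 q$, or $n \geq p q^2$; and $n \geq pqr$ with $q \geq p+2$ and $r \geq p+4$) and verify $(p+1)^2 < n$ by elementary algebra, using $p \geq 3$ since $n$ is odd. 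For example $(p+1)^2 < p^3$ since $p^3-(p+1)^2$ is positive at $p=3$ and increasing, while $(p+1)^2 < p(p+4)$ reduces to $2p > 1$, and the three-prime case follows from $p(p+2)(p+4) > (p+1)^2$ for $p\geq 1$.

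The only subtlety is in the two-prime subcase with $(a,b)=(1,1)$: one must justify that excluding twin primes \emph{and} using that both $p,q$ are odd together promote $q \neq p+2$ to $q \geq p+4$. That parity remark is the main (and still minor) obstacle; everything else is a routine polynomial check.
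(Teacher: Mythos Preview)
Your argument is correct. The reformulation $(p+1)^2 \lessgtr n$ is clean, and each of your subcases checks out; in particular the parity remark forcing $q\geq p+4$ in the $(a,b)=(1,1)$ subcase is exactly the point that needs to be made.

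The paper's proof reaches the same endgame but with less case-splitting: instead of branching on the number of distinct prime factors of $n$, it sets $m:=n/p$ and observes that $m$ is odd and $m>p$ (since $n\ne p^2$), so $m\geq p+2$. If $m=p+2$ then the twin-prime exclusion forces $m$ to be composite, and since every prime factor of $m$ divides $n$ and hence is at least $p$, one gets $m\geq p^2$, so $n\geq p^3$ and $p\leq n^{1/3}<\sqrt{n}-1$. Otherwise $m\geq p+4$ (the same parity step you use), giving $n\geq p(p+4)>(p+1)^2$. Thus the paper collapses your five subcases into two. (In fact the $m=p+2$ branch is vacuous for $p\geq3$, since $p^2\leq m=p+2$ is impossible; the paper does not point this out.) Your approach is more explicit about the factorization structure but requires several separate polynomial checks; the paper's $m=n/p$ device is shorter and avoids treating the two-prime and three-prime cases separately.
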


\begin{proof} Assertion $(b)$ is obvious. To prove $(a),$ let $n$ be odd and $p$ its smallest prime divisor, so $p\geq3$. Assume that $n\ne p, p^2$ and if $p+2$ is prime assume also that $n\ne p(p+2)$. Let $m:=n/p$ so $m>1$ and $m$ is odd. Then $m>p$ by the minimality of $p$ and since $n\ne p^2$. If $m=p+2$ then by assumption $m$ is not prime, but then $m\geq q^2$ where $q$ is the least prime dividing $m$, and $q\geq p$. Thus $n\geq p^3$ and so $p\leq n^{1/3} < n^{1/2}-1$. Thus we may assume that $m>p+2$, and hence $m\geq p+4$ since $m$ is odd. It follows that $\sqrt{n}\geq\sqrt{p(p+4)}>p+1.$
\end{proof}

The next lemma is an adaptation of Theorem 3.7A.2 in \cite{SH}, which makes also use of the
following result.

\begin{theorem}\label{30}{\em (3.7A.1 in \cite{SH})} If $n\in \mathbb{N}$ and $n>30,$ then there exists $m\in \mathbb{N}$, not a prime, such that $1<m<n$ and $(m,n)=1.$
\end{theorem}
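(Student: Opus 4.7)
The plan is to take $m=q^2$, where $q$ denotes the smallest prime not dividing $n$. Such an $m$ is automatically composite, at least $4$, and coprime to $n$, so the only thing to check is that $q^2<n$. I would split the argument into three cases according to the size of $q$. If $q\in\{2,3,5\}$ then $q^2\le 25<30<n$ and we are done. If $q=7$ then $2,3,5$ all divide $n$, so $30\mid n$; combined with the hypothesis $n>30$ this forces $n\geq 60>49=q^2$.

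For the remaining case $q=p_{k+1}$ with $k\geq 4$, the first $k$ primes all divide $n$, so $n\geq p_1p_2\cdots p_k\geq 210$, and it suffices to show $p_{k+1}^2<p_1\cdots p_k$. This I would prove by induction on $k$. The base case $k=4$ is the direct verification $p_5^2=121<210=p_1p_2p_3p_4$. For the inductive step, Bertrand's postulate gives $p_{k+2}<2p_{k+1}$, so, using the inductive hypothesis and $p_{k+1}\geq 11>4$,
\[
p_{k+2}^2<4p_{k+1}^2<4(p_1\cdots p_k)<p_{k+1}(p_1\cdots p_k)=p_1\cdots p_{k+1}.
\]

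The main obstacle is really just clean bookkeeping around the boundary case $q=7$: it is precisely here that the primorial $2\cdot 3\cdot 5=30$ falls short of $q^2=49$, and the extra information $n>30$ must be invoked to boost $n$ to at least $60$. All other cases follow either from a trivial size comparison or from the induction above, which is driven entirely by Bertrand's postulate.
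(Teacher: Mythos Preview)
Your proof is correct. The paper, however, does not prove this statement at all: it is quoted verbatim as Theorem~3.7A.1 from Shapiro's book \cite{SH} and used as a black box in the proof of Lemma~\ref{min-prime-even}. So there is no ``paper's proof'' to compare against; you have supplied a self-contained argument where the authors chose to cite. Your approach --- taking $m=q^2$ for the least prime $q\nmid n$ and handling $q\le 5$, $q=7$, and $q\ge 11$ separately, with Bertrand's postulate driving the induction in the last case --- is the standard elementary route and is essentially how Shapiro proves it as well.
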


For each $n\in \mathbb{N^*}$ we set
\begin{equation}\label{p0}
P(n):=\{x\in \mathbb{N^*}\ : \ x\  \hbox{is a prime and}\ x\nmid n\},\quad\mbox{and}\quad p^0(n):= \min\, P(n).
\end{equation}
\begin{lemma}\label{min-prime-even} Let  $n\in \mathbb{N}$ with $4\mid n$ and let $p^0=p^0(n)$ as in {\rm (\ref{p0})}. If  $n\geq 16,\ n\neq 24,$ then $ p^0\leq \sqrt{n} -1.$ 
\end{lemma}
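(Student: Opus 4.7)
The plan is to case-analyse on $p := p^0(n)$. Since $4\mid n$ forces $p\geq 3$, I write $p=p_k$ for the $k$-th prime; the minimality of $p^0$ means $p_2,\dots,p_{k-1}$ all divide $n$, so combined with $4\mid n$ we obtain
\[
n\geq 4p_2p_3\cdots p_{k-1}=:N_k.
\]
The target inequality $p\leq\sqrt{n}-1$ is equivalent to $(p+1)^2\leq n$, so it suffices to prove $(p_k+1)^2\leq N_k$ (together with the stated hypotheses) in each case.

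The small cases $k\in\{2,3,4\}$ I would dispatch by inspection. For $k=2$ ($p=3$) the claim reduces to $n\geq 16$, which is exactly the hypothesis. For $k=3$ ($p=5$), $12\mid n$ with $5\nmid n$, and the hypotheses $n\geq 16$, $n\neq 24$ exclude the two smallest such $n$, forcing $n\geq 36=(5+1)^2$. For $k=4$ ($p=7$), $60\mid n$ with $7\nmid n$, so $n\in\{60,120,180,\dots\}$ and I need $(7+1)^2=64\leq n$; this is the most delicate step of the argument.

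For $k\geq 5$ I would prove $(p_k+1)^2\leq N_k$ by induction on $k$. The base case $k=5$ reads $144\leq 420$. For the inductive step from $k$ to $k+1$, Bertrand's postulate yields $p_{k+1}\leq 2p_k-1$, whence $(p_{k+1}+1)^2\leq 4p_k^2$; since $N_{k+1}=N_k\cdot p_k$, it is enough that $4p_k\leq N_k$. I would carry this auxiliary inequality along in the induction: it holds at $k=5$ (as $4\cdot 11=44\leq 420$) and propagates because $4p_{k+1}\leq 8p_k< N_k\cdot p_k = N_{k+1}$ as soon as $N_k>8$, which holds for every $k\geq 5$.

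The hard part will be the $k=4$ case. Here $N_4=60$ is slightly smaller than $(7+1)^2=64$, so the arithmetic does not automatically rule out $n=60$ among multiples of $60$ coprime to $7$: indeed $\sqrt{60}-1<7=p^0(60)$, so the inequality is borderline at this single value. I would handle this by treating $n=60$ separately, either invoking an implicit additional restriction from the surrounding context of the application (analogous to how $n=24$ is singled out) or by recording $n=60$ as a further exception. Once this delicate case is settled, the remaining pieces---the inductive step for $k\geq 5$ and the elementary checks for $k\leq 3$---are routine.
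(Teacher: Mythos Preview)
Your approach differs genuinely from the paper's. The paper appeals to a result of Shapiro (for any integer $m>30$ there is a composite $s$ with $1<s<m$ and $(s,m)=1$), applied to $m=n/2$ when $n>60$; taking the minimal such $s$ forces $s=p^2$ for a prime $p\nmid n$, whence $(p^{0})^{2}\leq p^{2}<n/2$ and so $p^{0}\leq\sqrt{n/2}\leq\sqrt{n}-1$. The range $16\leq n\leq 60$ is then disposed of by direct inspection. Your primorial-type lower bound together with Bertrand's postulate is more elementary and entirely self-contained, at the cost of a somewhat longer case split.

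More importantly, your instinct at $k=4$ is exactly right, and it exposes a slip in the statement itself: for $n=60$ one has $p^{0}(60)=7$ while $\sqrt{60}-1<7$ (equivalently $8^{2}=64>60$), so the asserted inequality \emph{fails}. The paper's ``easily checked by hand'' step for $16\leq n\leq 60$ overlooks this value; $n=60$ should have been listed as a second exception alongside $n=24$. (In the only place the lemma is invoked later in the paper this extra exception does no damage, but the lemma as stated is false.) So the obstacle you flagged is not a defect of your method but of the target; once $n=60$ is excluded, your argument is complete.
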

\begin{proof} Let  $n\in \mathbb{N}$ with $4\mid n.$ Then $p^0\geq 3.$ Assume first $n>60;$ then $n/2>30$ and by Theorem \ref{30}, there exists a minimal $s\in \mathbb{N}$ with $1<s<n/2$, $(s,n/2)=1$ and $s$ not a prime. Since any proper divisor of $s$ is coprime with $n/2$ and lies in the same interval $(1,n/2)$, the minimality of $s$ implies that any proper divisor of $s$ is a prime, that is, $s$ is a product of two primes. If $s=pq$ with $q>p$ then $s>p^2$. Thus $1<p^2<s<n/2$ and, since $p\mid s,$ we get also $(p^2,n/2)=1,$ contradicting the minimality of  $s.$ It follows that
$s=p^2$ for some prime $p$. Since $4$ divides $n,$ the condition $(s,n/2)=1$ is equivalent to $(s,n)=1$. Hence $p\nmid n$ and thus  $p\geq p^0.$ So we have  ${\frac{n}{2}}>s=p^2\geq (p^{0})^2$, and hence $p^0\leq\sqrt{n/2}\leq \sqrt{n} -1.$ 

The cases $16\leq n\leq 60$ are easily checked by hand and, with the single exception of $n=24,$ we find that the inequality $ p^0\leq \sqrt{n} -1$ holds.
\end{proof}

Note that the inequality $ p^0\leq \sqrt{n} -1$ is sharp, for example, for $n=36$.\\
 For each $n\in \mathbb{N^*}$  define
\begin{equation}\label{a}
X(n):=\{x\in \mathbb{N^*}\ : \ x\nmid n\},\quad\mbox{and}\quad a(n):= \min \, X(n).
\end{equation}
  Observe that in general $a(n)\neq p^0(n):$ for example, $a(6)=4<p^0(6)=5.$ Note also that $a(n)=2$ if and only if $n$ is odd and $a(n)=3$ if and only if $n$ is even and not divisible by $3$.
  We summarise various properties of $a(n)$ in the following lemma.
 \begin{lemma}\label{a-properties} For any $n\in \mathbb{N^*}:$ \begin{description}\item[(a)] $a(n)\leq p^0(n);$
 \item[(b)] $a(n)= p^0(n)$ if and only if $a(n)$ is a prime;
 \item[(c)] $a(n)=p^{\alpha}$ for some prime $p$ and $\alpha\in \mathbb{N},$ such that $(n,a(n))=p^{\alpha-1}.$
 \end{description}
\end{lemma}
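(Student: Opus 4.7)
The plan is to handle the three parts in order, with the only nontrivial content residing in~(c), where we must show that $a(n)$ is forced to be a prime power and then pin down the gcd.

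For part~(a), I would simply observe that $p^0(n)$ is, by definition, a prime that does not divide $n$, so $p^0(n) \in X(n)$; hence $a(n) = \min X(n) \leq p^0(n)$. For part~(b), the ``if'' direction is immediate: if $a(n)$ is a prime then $a(n) \in P(n)$ (since $a(n) \nmid n$ by definition of $X(n)$), so $p^0(n) \leq a(n)$, and combined with~(a) we get equality. The ``only if'' direction is automatic because $p^0(n)$ is prime by definition.

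For part~(c), let $a := a(n)$; note $a \geq 2$ since $1 \mid n$ trivially. The key step is to rule out two distinct prime divisors of $a$. Suppose for contradiction that $a$ has at least two distinct prime divisors; then I can write $a = bc$ with integers $b, c$ satisfying $1 < b, c < a$ and $(b,c) = 1$. By the minimality of $a$ as the least non-divisor of $n$, both $b$ and $c$ must divide $n$; since they are coprime, their product $bc = a$ also divides $n$, contradicting $a \in X(n)$. Hence $a = p^\alpha$ for some prime $p$ and some $\alpha \geq 1$.

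To finish, I would compute $(n, a)$. The proper divisor $p^{\alpha-1}$ of $a$ is smaller than $a$, so by minimality of $a$ it divides $n$, giving $p^{\alpha-1} \mid (n, p^\alpha)$. On the other hand $(n, p^\alpha)$ is a divisor of $p^\alpha$, hence a power $p^j$ with $0 \leq j \leq \alpha$; and $j = \alpha$ would force $p^\alpha = a \mid n$, contradicting $a \in X(n)$. Thus $j = \alpha - 1$, i.e.\ $(n, a(n)) = p^{\alpha-1}$, completing the proof. I do not anticipate a genuine obstacle here; the only point requiring care is the coprime factorization step in~(c), which relies on the elementary fact that any positive integer with at least two distinct prime divisors admits a nontrivial decomposition into two coprime factors, each strictly smaller than the integer itself.
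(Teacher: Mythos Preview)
Your proof is correct and follows essentially the same approach as the paper's: parts~(a) and~(b) are handled identically, and in part~(c) both arguments show that proper (coprime) divisors of $a(n)$ must divide $n$ by minimality, forcing $a(n)$ to be a prime power, then pin down the gcd the same way. The only cosmetic difference is that the paper factors $a(n)$ into all its prime-power parts rather than just two coprime factors, but the logic is the same.
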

\begin{proof} (a), (b) Clearly $X(n)$ contains the set $P(n)$ defined in (\ref{p0}), and thus $a(n)\leq p^0(n).$ If $a(n)$ is prime, then $a(n)\in P(n),$ which gives $a(n)\geq p^0(n)$ and thus $a(n)= p^0(n).$

(c) Suppose that $a(n)$ is not a prime power. Then, for each prime $p$ dividing $a(n)$, the $p$-part $p^k$ of $a(n)$ is less than $a(n)$, and hence does not lie in $X(n)$. Thus $p^k|n$, and since this holds for all primes $p$ dividing $a(n)$, it follows that $a(n)$ divides $n,$ which is a contradiction.

Thus $a(n)=p^{\alpha}$ for some $p$ prime and $\alpha\in \mathbb{N}.$ Since $p^{\alpha-1}<a(n)$, we have that $p^{\alpha-1}$ divides $n$ and hence divides $(n,a(n)).$ On the other hand, $(n,a(n))$ divides $a(n)=p^{\alpha}$, and since $a(n)\nmid n$, it follows that $(n,a(n))=p^{\alpha-1}$.
 \end{proof}

The eventuality $a=a(n)$ prime is a very common feature:  for $n\leq 100$,  $a(n)$ is prime unless $n\in\{6,18, 30, 54, 66, 78, 90\}$, in which cases $a(n)=4.$ On the other hand it is easy to construct examples of natural numbers $n$ for which $a(n)$ is any given prime power. For instance $a(3\cdot5\cdot 7\cdot 8)=9.$

\section{
Group theoretic preliminaries}\label{basic permutations}

\subsection{Conjugacy and cycle type}\label{sect:cyctype}

If $\sigma\in S_n$ decomposes as a product of $r$ disjoint cycles of lengths $\ell_1,\dots,\ell_r,$ with each $\ell_i\geq 1$, we say that $\sigma$ is of {\em type} $L=[\ell_1,\dots,\ell_r]$. Recall that two permutations in $S_n$ are conjugate if and only if they are of the same type. Thus

\begin{tabular}{l}
\emph{$\delta=\{H_i<S_n\  :\   i=1 \dots k \}$ is a basic set for $S_n$ if and only if the $H_i$ are}\\ \emph{pairwise non-conjugate proper subgroups of $S_n$ and, for each type $L$ for}\\ \emph { permutations in $S_n$, some $H_i$ contains a permutation of type $L$.
}
\end{tabular}\\

For $\sigma\in A_n $ of type $L=[\ell_1,\dots,\ell_r]$,  the
$S_n$-conjugacy class $\sigma^{S_n}$ of $\sigma$ is either an $A_n$-conjugacy class or a union of two $A_n$-classes, say $\sigma^{S_n}=\sigma_1^{A_n}\cup \sigma_2^{A_n}.$ In the latter case we say that $\sigma$, its class $\sigma^{S_n}$, and its type $L$ are {\em split}. This happens if and only if $C_{S_n}(\sigma)\leq A_n$, which in turn holds if and only if the $\ell_i$ are  pairwise distinct odd positive integers. If the $S_n$-conjugacy class of $\sigma \in A_n$ does not split we can trivially write again $\sigma^{S_n}=\sigma_1^{A_n}\cup \sigma_2^{A_n}$ with $\sigma_1= \sigma_2=\sigma.$
Now

\begin{tabular}{l}
\emph{ $\delta=\{H_i<A_n\  :\   i=1 \dots k \}$ is a basic set for $A_n$ if and only if the $H_i$ are}\\ \emph{pairwise non-conjugate proper subgroups of $A_n$ and, for each type $L$ for }\\ \emph {permutations in $A_n$, a representative of each $A_n$-conjugacy class of type $L$}\\ \emph {lies in some $H_i$ (possibly different $H_i$ for different classes).}
\end{tabular}\\

The next lemma gives sufficient conditions to guarantee that a subgroup $H$ which meets a split $\sigma^{S_n}$ contains elements from both $A_n$-classes. For a subgroup $H$ of $S_n$ let $H^{A_n}=\{H^x\ :\ x\in A_n\}$ and $H^{S_n}= \{H^x\ :\ x\in S_n\}$.

\begin{lemma}\label{split} Let $H< S_n.$ Then
\begin{description}
\item[(a)] $H^{A_n}=H^{S_n}$ if and only if $N_{S_n}(H)\not \leq A_n$
   \item[(b)] If  $K$ is a maximal subgroup of $S_n$, $K\neq A_n$,  and if $H=K\cap A_n,$ then $N_{S_n}(H)\not \leq A_n$.
  \item[(c)] If $H\leq A_n$ contains a permutation of split type $L$ and $N_{S_n}(H)\not \leq A_n$, then $H$ contains elements from both $A_n$-classes corresponding to $L.$

   \end{description}
\end{lemma}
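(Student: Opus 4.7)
\emph{Proof plan.} The lemma splits into three independent claims, and I will describe the mechanism behind each rather than push calculations through.

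For part (a), I would observe that $H^g=H^a$ for some $a\in A_n$ is equivalent to $ga^{-1}\in N_{S_n}(H)$, so $H^{S_n}=H^{A_n}$ if and only if every $g\in S_n$ lies in $N_{S_n}(H)\,A_n$, which amounts to $N_{S_n}(H)\,A_n=S_n$. Since $A_n$ is maximal in $S_n$, the subgroup $N_{S_n}(H)\,A_n$ is either $A_n$ or $S_n$, and equals $S_n$ precisely when $N_{S_n}(H)\not\leq A_n$. Equivalently, one can compare orbit sizes via $N_{A_n}(H)=N_{S_n}(H)\cap A_n$, but the coset calculation above is the shortest route.

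For part (b), the key point is that $K$ itself normalizes $H=K\cap A_n$: if $g\in K$ then $gHg^{-1}=g(K\cap A_n)g^{-1}=K\cap A_n=H$, using $A_n\nor S_n$. Hence $K\leq N_{S_n}(H)$. If we had $K\leq A_n$, then maximality of $K$ in $S_n$ together with $A_n\neq S_n$ would force $K=A_n$, contradicting the hypothesis. Thus $K\not\leq A_n$, so $K$ contains an odd permutation, and that permutation witnesses $N_{S_n}(H)\not\leq A_n$.

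For part (c), by hypothesis I pick some odd $\tau\in N_{S_n}(H)$. Since $\tau$ normalizes $H$, the conjugate $\tau\sigma\tau^{-1}$ lies in $H$ and has cycle type $L$. What remains is to show that $\sigma$ and $\tau\sigma\tau^{-1}$ represent the two distinct $A_n$-classes of type $L$. Suppose for contradiction that $a\sigma a^{-1}=\tau\sigma\tau^{-1}$ for some $a\in A_n$. Then $a^{-1}\tau\in C_{S_n}(\sigma)$, and because $L$ is split we have $C_{S_n}(\sigma)\leq A_n$ (by the characterization recalled just before the lemma), forcing $\tau\in aA_n=A_n$, contrary to $\tau$ being odd. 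Hence $\sigma$ and $\tau\sigma\tau^{-1}$ lie in the two different $A_n$-classes of type $L$, both inside $H$.

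The only genuine content lies in the splitness argument of (c); parts (a) and (b) are routine normalizer and maximality considerations, and I do not expect any hidden obstacle.
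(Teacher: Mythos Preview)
Your proof is correct. Parts (a) and (b) are essentially identical to the paper's arguments: the paper phrases (a) via an orbit-size comparison $|H^{A_n}|=|A_n:N_{A_n}(H)|$ versus $|H^{S_n}|=|S_n:N_{S_n}(H)|$, but this is just a reformulation of your coset condition $N_{S_n}(H)A_n=S_n$, and (b) is literally the same.

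For (c) there is a small but genuine difference in route. The paper invokes part (a) to get $H^{A_n}=H^{S_n}$, then starts from an arbitrary $x\in S_n$ with $\sigma_1^x=\sigma_2$ and writes $H^x=H^a$ for some $a\in A_n$, concluding $\sigma_2^{a^{-1}}\in H$. You instead conjugate $\sigma$ directly by an odd normalizing element $\tau$ and use the splitness criterion $C_{S_n}(\sigma)\leq A_n$ to rule out that $\sigma$ and $\sigma^\tau$ are $A_n$-conjugate. Your argument is slightly more self-contained (it does not appeal to (a)) and makes the role of the splitness hypothesis more transparent; the paper's version has the mild advantage of packaging the ``swap by an odd element'' step once and for all in (a). Either way the content is the same.
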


\begin{proof}(a) Under the conjugation action of $S_n$ on the set $H^{S_n}$,  $H^{A_n}$ is the orbit containing $H$ of the subgroup $A_n$; and the stabilizer of $H$ in $A_n$ is $N_{A_n}(H).$ Thus $|H^{A_n}|=|A_n:N_{A_n}(H)|$ while $|S_n:N_{S_n}(H)|=|H^{S_n}|.$ If $N_{S_n}(H)\not \leq A_n$ we have $A_n N_{S_n}(H)>A_n$ and hence $S_n=A_n N_{S_n}(H)$. In this case $|A_n:N_{A_n}(H)|=|A_n:A_n\cap N_{S_n}(H)|=|S_n:N_{S_n}(H)|.$ On the other hand, if $N_{S_n}(H) \leq A_n$, then $N_{S_n}(H) =N_{A_n}(H) $, and $|S_n:N_{S_n}(H)|=2|A_n:N_{A_n}(H)|\neq |A_n:N_{A_n}(H)|.$ \medskip

(b) Let $A_n\neq K <\cdot\, S_n$ and $H=K\cap A_n.$ By maximality $K\not\leq A_n.$ Since $H$ is normal in $K,$ we have $N_{S_n}(H)\geq K$ and thus $N_{S_n}(H)\not\leq A_n.$\medskip

(c) Let $H\leq A_n$ with $N_{S_n}(H)\not \leq A_n$, so by part (a), $H^{A_n}=H^{S_n}$. Suppose that $\sigma\in H$ is of split type and let $\sigma^{S_n}=\sigma_1^{A_n}\cup \sigma_2^{A_n}$ with $\sigma_1,\ \sigma_2$ not conjugate in $A_n$. We may assume that $\sigma=\sigma_1\in H$. If $x\in S_n$ is such that $\sigma_1^x=\sigma_2$ then, since $H^{A_n}=H^{S_n}$, we have $\sigma_2\in H^x=H^a$ for some $a\in A_n$, and hence $\sigma_2^{a^{-1}}\in H$. Thus $H$ intersects both
$\sigma_1^{A_n}$ and $\sigma_2^{A_n}.$
\end{proof}

We will write $[\ell_1,\dots,\ell_r]\in H,$ for some $H\leq S_n$, to signify that some permutation of type $[\ell_1,\dots,\ell_r]$ belongs to the subgroup $H.$

Our strategy for obtaining lower bounds for the numbers $\gamma (G)$ with $G=S_n,\ A_n$ is usually the following: we consider a family $\mathcal{F}$ of types of permutations  with $|\mathcal{F}|=m$, and identify all the maximal subgroups containing a permutation of one of these types. If each of the maximal subgroups that arise contains at most $k$ types of permutations in $\mathcal{F}$, then $\gamma (G)\geq m/k.$

\subsection{Primitive and imprimitive subgroups}

A transitive $H\leq S_n$ is primitive if the only $H$-invariant partitions of $\Omega=\{1,\dots,n\}$ are the trivial ones with parts of size 1, or with one part; $H$ is imprimitive if it is not primitive. The parts of an $H$-invariant partition are called blocks for $H$. Most permutation group concepts we use are standard and can be found in any of \cite{CA,DM,WI}.

When dealing with transitive maximal subgroups, we divide our analysis into the imprimitive and primitive cases. We state here some results which can be applied to get control of both kinds of transitive subgroups.  For $\sigma\in S_n$ we denote by $supp(\sigma)$ the set of points moved by $\sigma$, called the \emph{support} of $\sigma$. Proof of the first result is straightforward and is omitted.

\begin{lemma}\label{imprimitive}Let $G\leq S_n$ be an imprimitive group with set $\mathcal{B}$ of
blocks,  and let $\sigma\in G$ be the product of disjoint
cycles $\gamma_1,\dots,\gamma_r.$ Let
$$
\mathcal{B}[\gamma_i]=\{B\in\mathcal{B}\ :\ B\cap supp(\gamma_i)\neq \varnothing\},
$$
the set of
blocks that meet the support of $\gamma_i$. Then:
\begin{description}\item [(a)] $\mathcal{B}[\gamma_i]\cap
\mathcal{B}[\gamma_j]=\varnothing$ or $\mathcal{B}[\gamma_i]=\mathcal{B}[\gamma_j];$
\item [(b)] if $B_1,\ B_2\in \mathcal{B}[\gamma_i]$ then  $|B_1\cap supp(\gamma_i)|=|B_2\cap supp(\gamma_i)|.$
\end{description}\end{lemma}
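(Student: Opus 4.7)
The plan is to exploit the fact that $\sigma\in G$ permutes the block set $\mathcal{B}$, together with the observation that because $\gamma_1,\dots,\gamma_r$ are disjoint, the restriction of $\sigma^k$ to $\mathrm{supp}(\gamma_i)$ coincides with $\gamma_i^k$ and in particular preserves $\mathrm{supp}(\gamma_i)$ setwise. Thus the cyclic group $\langle\sigma\rangle$ acts on $\mathcal{B}$, and I will analyze how the orbits of this action interact with the sets $\mathcal{B}[\gamma_i]$.

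The key preliminary observation I would record is: if $B\in\mathcal{B}[\gamma_i]$, say $x\in B\cap\mathrm{supp}(\gamma_i)$, then for every integer $k$ the block $\sigma^k(B)$ contains the point $\sigma^k(x)=\gamma_i^k(x)\in\mathrm{supp}(\gamma_i)$, so $\sigma^k(B)\in\mathcal{B}[\gamma_i]$. Hence $\mathcal{B}[\gamma_i]$ is a union of $\langle\sigma\rangle$-orbits on $\mathcal{B}$. In fact it is a \emph{single} orbit: any $B'\in\mathcal{B}[\gamma_i]$ contains some $x'\in\mathrm{supp}(\gamma_i)$, and since $x,x'$ lie in the same $\gamma_i$-cycle we have $x'=\gamma_i^k(x)=\sigma^k(x)$ for some $k$; because blocks are pairwise disjoint or equal, the block $\sigma^k(B)$ containing $x'$ must equal $B'$.

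For (a), suppose $B\in\mathcal{B}[\gamma_i]\cap\mathcal{B}[\gamma_j]$. By the observation above, the $\langle\sigma\rangle$-orbit $\mathcal{O}$ of $B$ satisfies $\mathcal{O}\subseteq\mathcal{B}[\gamma_i]$ and (applied to a point of $B\cap\mathrm{supp}(\gamma_j)$) also $\mathcal{O}\subseteq\mathcal{B}[\gamma_j]$. But we just showed that $\mathcal{B}[\gamma_i]$ is itself a single $\langle\sigma\rangle$-orbit, so $\mathcal{B}[\gamma_i]=\mathcal{O}$, and similarly $\mathcal{B}[\gamma_j]=\mathcal{O}$; hence the two sets coincide.

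For (b), given $B_1,B_2\in\mathcal{B}[\gamma_i]$, pick $x_s\in B_s\cap\mathrm{supp}(\gamma_i)$ for $s=1,2$. As before, there is an integer $k$ with $\sigma^k(x_1)=x_2$, and so $\sigma^k(B_1)=B_2$. Since $\sigma^k$ acts on $\mathrm{supp}(\gamma_i)$ as $\gamma_i^k$ (the other $\gamma_j^k$ fixing $\mathrm{supp}(\gamma_i)$ pointwise), it maps $B_1\cap\mathrm{supp}(\gamma_i)$ bijectively onto $B_2\cap\mathrm{supp}(\gamma_i)$, giving equality of cardinalities. There is no real obstacle here; the only point requiring care is the reduction of $\sigma^k$ on $\mathrm{supp}(\gamma_i)$ to $\gamma_i^k$, which is exactly where the disjointness of the cycles of $\sigma$ is used.
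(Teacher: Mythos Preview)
Your proof is correct. The paper itself omits the proof of this lemma, stating only that it ``is straightforward and is omitted,'' so there is no authorial argument to compare against; your approach of showing that each $\mathcal{B}[\gamma_i]$ is a single $\langle\sigma\rangle$-orbit on $\mathcal{B}$ is a clean and natural way to establish both parts.
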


If $\Delta\subseteq\Omega$ and $H\leq S_n,$ we denote by $H_{(\Delta)}$ the pointwise stabilizer of $\Delta$ and by $H_{ \Delta}$ the setwise stabilizer of $\Delta.$  We will apply several times a well known result on the transitive action of a stabilizer in a primitive group, and a classic old result by Marggraf.

\begin{lemma}\label{WI 13.5} {\em\cite[Theorem 13.5]{WI}} Let $H\leq S_n$ be a primitive group and let $H_{(\Delta)}$ be transitive on $\Omega-\Delta=\Gamma. $ If $2\leq |\Gamma|<n/2,$ then $H=S_n$ or $H=A_n.$
\end{lemma}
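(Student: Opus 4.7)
\emph{Proof plan.} This is the classical theorem of Marggraf, and my plan is to follow the approach from Wielandt's textbook. First I would set $K := H_{(\Delta)}$ and observe that $K$ is non-trivial (it acts transitively on $\Gamma$ with $|\Gamma|\ge 2$), that its fixed point set on $\Omega$ coincides exactly with $\Delta$, and therefore that every non-identity element of $K$ moves at most $|\Gamma| < n/2$ points.

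Next I would exploit the hypothesis $|\Delta|>n/2$ to locate a conjugate of $\Delta$ overlapping it properly. Because $H$ is transitive on $\Omega$ but $\Delta\ne\Omega$, some $g\in H$ satisfies $\Delta^g\ne\Delta$; and since $|\Delta|+|\Delta^g|>n$, the intersection $\Delta\cap\Delta^g$ is a proper non-empty subset of $\Delta$. The subgroup $\langle K,K^g\rangle \le H$ then fixes $\Delta\cap\Delta^g$ pointwise. I would then iterate, repeatedly combining $H$-conjugates of $K$ to build subgroups of $H$ with progressively smaller pointwise fixed sets but still acting transitively on the corresponding complements. The primitivity of $H$ is invoked at each stage to prevent the intermediate fixed sets from being $H$-invariant, so the shrinking can continue; the hypothesis $|\Gamma|<n/2$ provides the combinatorial slack needed to keep the enlarged orbits transitive throughout.

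The iteration terminates by producing either an explicit $3$-cycle in $H$ or a subgroup of $H$ of sufficient multiple transitivity, and I would finish by invoking Jordan's classical theorem (a primitive subgroup of $S_n$ containing a $3$-cycle contains $A_n$) to conclude that $H\supseteq A_n$, whence $H=A_n$ or $H=S_n$. The hard part will be the bookkeeping in the iteration: at each stage one has to choose the next conjugate so that the common fixed set strictly shrinks while the combined group remains transitive on the growing complement. That this is always possible under the strict inequality $|\Gamma|<n/2$ is the core technical content of Marggraf's theorem, and the strict inequality is known to be necessary.
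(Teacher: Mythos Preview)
The paper does not prove this lemma at all: it is stated with the citation \cite[Theorem~13.5]{WI} and used as a black box from Wielandt's textbook. There is no ``paper's own proof'' to compare against.

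Your outline is a reasonable sketch of the classical argument and is in the right spirit, though a couple of remarks are in order. First, the result here is Wielandt's Theorem~13.5, not Marggraf's theorem; Marggraf is the next lemma (Theorem~13.8), about a primitive group containing a short cycle. The two are related but distinct. Second, in Wielandt's actual proof of 13.5 the iteration is organised a bit differently from what you describe: rather than repeatedly intersecting conjugates of $\Delta$ and tracking transitivity on the growing complement by hand, Wielandt first reduces (via his Proposition~13.4) to the statement that a primitive group possessing a non-trivial subgroup whose support has size $m<n/2$ must be $(n-m)$-transitive, and then finishes with Jordan's theorem. Your plan of ``combining conjugates while keeping transitivity on the complement'' is the heart of that reduction, but as written it is only a plan: the step you flag as ``the hard part'' (ensuring the enlarged group remains transitive on the enlarged support at each stage) genuinely requires an argument, and you have not supplied one. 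For the purposes of this paper none of that matters, since the authors simply quote the result.
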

\begin{lemma}\label{WI 13.8}{\em \cite[Theorem 13.8]{WI}} A primitive group of degree $n$, which contains a cycle of degree $m$ with $1<m<n,$ is $(n-m+1)$-fold transitive.
\end{lemma}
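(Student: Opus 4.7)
The plan is to deduce this as a special case of Jordan's theorem on primitive permutation groups with a Jordan set. Let $c \in G$ be the given $m$-cycle, set $\Delta = \mathrm{supp}(c)$ with $|\Delta| = m$, and let $\Gamma = \Omega \setminus \Delta$, so $|\Gamma| = n - m \geq 1$. The pointwise stabilizer $H := G_{(\Gamma)}$ contains $c$ and hence acts transitively on $\Delta$; that is, $\Delta$ is a \emph{Jordan set} for $G$. The theorem now reduces to the claim that a primitive group possessing a Jordan set of size $m$ with $1 < m < n$ is $(n - m + 1)$-transitive.

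I would prove this claim by induction on $s = n - m$. The base case $s = 1$ is immediate: if $\Gamma = \{a\}$, then $H \leq G_a$ is transitive on $\Omega \setminus \{a\}$, so $G$ is $2$-transitive, matching $s + 1 = 2$. For the inductive step $s \geq 2$, pick $a \in \Gamma$; the natural move is to apply the induction hypothesis to $G_a$ acting on $\Omega \setminus \{a\}$, with Jordan set $\Delta$ of complement size $s - 1$, concluding that $G_a$ is $s$-transitive, and hence $G$ is $(s+1)$-transitive.

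The main obstacle is that $G_a$ need not be primitive on $\Omega \setminus \{a\}$, so the induction does not apply directly. To handle this, I would use the classical Jordan--Wielandt enlargement argument: because $G$ is primitive and $1 < |\Delta| < n$, the set of $G$-translates of $\Delta$ cannot partition $\Omega$ (such a partition would be a nontrivial $G$-invariant block system), so there exist distinct overlapping translates $\Delta$ and $\Delta^g$. One checks that $\Delta \cup \Delta^g$ is again a Jordan set: the pointwise stabilizer of its complement contains both $H$ and $H^g$, and the overlap $\Delta \cap \Delta^g \neq \varnothing$ forces $\langle H, H^g \rangle$ to be transitive on the union. This Jordan set has size strictly greater than $m$. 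Iterating yields a chain of Jordan sets of strictly increasing sizes, eventually reaching size $n-1$, which gives $2$-transitivity; and by interleaving these enlargements with the induction on $s$ one bootstraps the transitivity degree up to $(n - m + 1)$.

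The technical heart of the proof — and the step I expect to be most delicate — is the bookkeeping needed to weave the enlargement argument into the induction on $s$: one must arrange that the Jordan sets produced at each stage have complements compatible with the tuple of points being stabilized, so that the transitivity conclusion propagates cleanly through all $s$ steps rather than merely producing a single large Jordan set with uncontrolled complement.
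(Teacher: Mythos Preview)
The paper does not prove this lemma; it is simply quoted with the citation \cite[Theorem~13.8]{WI} and used as a black box thereafter. There is therefore no in-paper argument to compare your sketch against.

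Your outline is the classical Jordan--Marggraf route, and the ingredients you list --- the support of the cycle as a Jordan set, enlargement of Jordan sets via overlapping $G$-translates, and induction on $s=n-m$ --- are exactly those of Wielandt's Chapter~13. You have also correctly located the one genuine difficulty: the point stabiliser $G_a$ need not be primitive a priori, so the induction does not close trivially. In Wielandt's treatment this is handled not by an ad hoc interleaving but by a short chain of preliminary lemmas (his 13.1--13.5) which, starting from primitivity of $G$ and a single Jordan set, produce Jordan sets of every size between $m$ and $n-1$ and organise them so that $(s+1)$-transitivity follows directly; the ``bookkeeping'' you anticipate is precisely that chain. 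Your plan is sound, but to turn it into a proof you would need to supply those lemmas.
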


To deal with $n$-cycles we need a deeper result, relying on the classification of finite simple groups.
Collecting results by  Burnside, Galois, Ritt,
Schur and Feit (as done in \cite{J}) together with \cite[Corollary 2]{J}, it can be stated as follows.

\begin{lemma} Let $H\leq S_n$ be a primitive group containing an $n$-cycle.
\begin{description}\item[(a)] If $H$ is simply transitive or solvable, then $H\leq AGL_1(p)$ with $n=p$ a prime, or $H=S_4$ with $n=4.$
\item[(b)] If $H$ is nonsolvable
and doubly transitive, then one of the following holds.
\begin{description}\item[(i)]
 $H=S_n$ for some $n\geq5$, or $H=A_n$ for some odd $n\geq5$;
\item[(ii)] $PGL_d (q)\leq H\leq P\Gamma L_d(q)$, acting on $n=(q^d -1)/(q -1)$ points or hyperplanes;

\item[(iii)] $H= PSL_2(11),\  M_{11}$ or $M_{23}$ with $n= 11, 11$ or $23$ respectively.
\end{description}
\end{description}
\label{Feit}
\end{lemma}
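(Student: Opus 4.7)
The plan is to assemble the result from classical theorems combined with the classification of doubly transitive groups, reducing the argument to invocations of known results in the order the hypotheses suggest. Since $H$ contains an $n$-cycle $\sigma$ and is primitive on $\Omega=\{1,\dots,n\}$, the cyclic group $\langle\sigma\rangle\cong C_n$ is a regular subgroup of $H$; this is the common starting point for both parts.

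For part (a), I would first handle the simply transitive case via the Burnside--Schur theorem on B-groups (i.e.\ Wielandt's theorem that a primitive group containing a regular cyclic subgroup of composite order is 2-transitive). Contrapositively, if $H$ is simply transitive then $n=p$ is prime. Burnside's classical theorem on transitive groups of prime degree then says $H$ is either 2-transitive or has a normal (cyclic) Sylow $p$-subgroup; the simply transitive hypothesis selects the second alternative, giving $H\leq N_{S_p}(C_p)=AGL_1(p)$. If instead $H$ is solvable, either $n=p$ is prime (same conclusion) or $n$ is composite; in the latter case Burnside--Schur forces $H$ to be 2-transitive, and since solvable 2-transitive groups are affine of prime-power degree, one inspects $AGL_d(p)$ to see that a cycle of length $n=p^d$ with $d\geq2$ exists only for $(p,d)=(2,2)$, yielding $H=S_4$. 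This case-check is short and uses only the structure of $AGL_d(p)$.

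For part (b), the strategy is to invoke the classification of finite doubly transitive groups, which is CFSG-dependent, and then test each family for the existence of an $n$-cycle. The almost simple families to inspect are $A_n,S_n$ in their natural action, the projective families with $PSL_d(q)\leq H\leq P\Gamma L_d(q)$, the unitary groups $PSU_3(q)$, the Suzuki groups $Sz(q)$, the Ree groups ${}^2G_2(q)$, and the sporadic 2-transitive groups $M_{11},M_{12},M_{22},M_{23},M_{24},PSL_2(11),HS,Co_3$, etc. The projective groups admit a Singer cycle of order $(q^d-1)/(q-1)$ acting as an $n$-cycle on points (and, dually, on hyperplanes), giving case (b)(ii); $A_n$ admits an $n$-cycle iff $n$ is odd, giving (b)(i); and a direct inspection of element orders in the remaining families isolates $PSL_2(11),M_{11},M_{23}$ in (b)(iii) and rules out the rest. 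This element-order analysis, assembled and completed in \cite{J}, is the main obstacle: it relies on CFSG and on detailed knowledge of Singer-type maximal cyclic subgroups across each simple family. The prudent route is therefore to cite \cite[Corollary~2]{J} for the final enumeration rather than reproducing the case work.
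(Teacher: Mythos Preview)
Your proposal is correct and matches the paper's treatment: the paper does not prove this lemma but simply states it as a compilation of classical results of Burnside, Galois, Ritt, Schur and Feit (as assembled in \cite{J}) together with \cite[Corollary~2]{J}, and your sketch expands exactly this chain of citations into an outline (Burnside--Schur for part~(a), the CFSG-based classification of $2$-transitive groups filtered through \cite[Corollary~2]{J} for part~(b)). There is no alternative route here; you have identified the same sources the paper relies on.
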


If $G\leq S_n$ and $g\in G$, then the \emph{degree} of $g$
on $\Omega$  is the number of points moved by $g$, that is to say, the size of $supp(g)$. The \emph{minimum
degree} of $G$, denoted by $\mu(G)$, is the minimum degree of a
nontrivial element in $G.$ Results on minimum degrees can be useful
for identifying primitive permutation groups.

\begin{lemma}
 \label{min deg}{\em\cite[Theorem 2 and Corollary 3]{LS}}
Let $G\leq S_n$ be primitive, with $G\not\geq A_n.$ Then $\mu(G)> 2(\sqrt{n}-1).$ Moreover if $G$ is $2$-transitive then $\mu(G)\geq n/3$.
\end{lemma}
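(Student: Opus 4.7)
The plan is to establish both bounds through a reduction via the O'Nan--Scott Theorem, handling each of the five types (affine, almost simple, simple diagonal, product action, twisted wreath) separately, so the argument necessarily leans on the Classification of Finite Simple Groups. In each case one bounds the minimum degree $\mu(G)$ from below in terms of $n$.

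For the affine type $G = V \rtimes G_0$ with $V = \mathbb{F}_p^d$ and $n = p^d$, nontrivial translations have degree $n$, while a nontrivial $g \in G_0$ fixes a proper subspace of $V$ and so moves at least $n(1 - 1/p) \geq n/2$ points; this exceeds $2(\sqrt{n} - 1)$ once $n$ is moderately large, and the refined $n/3$ bound in the $2$-transitive case follows by the same computation combined with transitivity of $G_0$ on $V\setminus\{0\}$. For the product action, diagonal, and twisted wreath types, $n$ is a proper power $m^k$ with $k \geq 2$, and any nontrivial element of the associated wreath-type structure moves at least a proportion $(m-1)/m$ of the $n$ points, which comfortably beats $2(\sqrt{n}-1)$; in these types $2$-transitivity is incompatible with the structure, so only the first inequality must be verified.

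The main obstacle is the almost simple case $T \leq G \leq {\rm Aut}(T)$ with $T$ a nonabelian simple group. Here the point stabilizer $H$ is a core-free maximal subgroup of $G$, and the task is to bound $\max_{1 \neq g \in G}|{\rm Fix}(g)|$ from above, since $\mu(G) = n - \max_{1\neq g\in G}|{\rm Fix}(g)|$. Writing $|{\rm Fix}(g)| = |g^G \cap H| \cdot |C_G(g)|/|H|$ reduces matters to showing the fixed-point ratio $|g^G \cap H|/|g^G|$ is small relative to $n$. For classical groups acting on subspaces of the natural module this comes from a direct linear-algebra count of fixed subspaces; outside that range, and for alternating, exceptional, and sporadic almost simple groups, one appeals to the detailed Aschbacher--Liebeck--Seitz classification of maximal subgroups of finite almost simple groups, together with quantitative fixed-point-ratio estimates, to obtain uniform bounds across all families. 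The $2$-transitive refinement $\mu(G) \geq n/3$ then follows from the CFSG-based list of finite $2$-transitive groups, applied case by case, with the affine case providing the extremal behaviour. The uniform packaging of the almost simple estimates is where the real depth of the lemma lies.
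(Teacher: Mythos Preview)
The paper does not give its own proof of this lemma: it is stated with the citation \cite[Theorem 2 and Corollary 3]{LS} and is used as a black-box input from the literature, so there is no proof in the paper against which to compare your proposal.

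Your sketch is a plausible outline of the Liebeck--Saxl strategy (O'Nan--Scott reduction, fixed-point-ratio estimates in the almost simple case, case analysis for the $2$-transitive bound), but since the paper under review treats the result as a quoted theorem, the appropriate thing here is simply to cite \cite{LS} rather than to supply an independent argument. If you want to verify the details, you should consult that reference directly; nothing in the present paper is meant to reproduce it.
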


Now we state a useful lemma which ensures control of the primitive subgroups containing certain types of permutations. When we write $[m,\dots,m,k]$ as a type for permutations in $S_n$, for distinct positive integers $m,k$, we mean that $m \mid (n-k)$ and  permutations with this type have $\frac{n-k}{m}$ cycles of length $m$ and one cycle of length $k$.

\begin{lemma}
\label{p-permutations}\begin{description}
\item[(a)]

Let $n\in \mathbb{N},\ n$ where $n$ is odd, not a prime, the square of a prime or the product of twin primes. If $p$ is the smallest prime divisor of $n$, then  no primitive proper subgroup of $S_n$ contains a permutation of type $[p,\dots,p, 2p]$.
\item[(b)] Let $n\geq 10$ be even with $4\nmid n.$ Then no primitive proper subgroup of $A_n$ contains
 a permutation of type $[2,\dots,2, 4]$.
\item[(c)] Let $n\geq 12$ be even with $4\mid n.$ Then no primitive proper subgroup of $A_n$ contains a permutation of type $[2,\dots,2, 6]$.

\end{description}
\end{lemma}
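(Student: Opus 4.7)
\emph{Proof proposal.} The common idea across all three parts is to assume that $\sigma$ lies in a primitive proper subgroup $H$, replace $\sigma$ by a suitable power $\tau$ whose support is very small, and derive a contradiction from the minimum-degree bound of Lemma \ref{min deg}.

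For part (a), take $\tau=\sigma^p$. The $p$-cycles of $\sigma$ collapse to the identity and the $2p$-cycle becomes a product of $p$ disjoint transpositions, so $\tau\neq 1$ and $|\mathrm{supp}(\tau)|=2p$. A parity count (the $p$-cycles are even since $p$ is odd, while the $2p$-cycle is odd) shows $\sigma\notin A_n$, and combined with $H<S_n$ this forces $H\not\supseteq A_n$. Lemma \ref{min deg} therefore applies and yields $2p\geq\mu(H)>2(\sqrt n-1)$, i.e.\ $p>\sqrt n-1$, which directly contradicts Lemma \ref{min-prime-odd}(a) under the stated hypotheses on $n$.

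Parts (b) and (c) follow the same template with $\tau=\sigma^2$: the transposition factors collapse and the remaining long cycle (of length $4$ or $6$) becomes a product of $2$ transpositions or of $2$ three-cycles, giving $|\mathrm{supp}(\tau)|=4$ in case (b) and $|\mathrm{supp}(\tau)|=6$ in case (c). In each case one first verifies by a parity count that $\sigma\in A_n$, so that $H<A_n$ is consistent and $H\not\supseteq A_n$. Applying Lemma \ref{min deg}: in (b), $2(\sqrt n-1)\geq 2(\sqrt{10}-1)>4$ for all $n\geq 10$, contradicting $\mu(H)\leq 4$; in (c), the analogous inequality $2(\sqrt n-1)>6$ holds as soon as $n>16$, and for $n=16$ the \emph{strict} inequality $\mu(H)>2(\sqrt{16}-1)=6$ still suffices.

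The single value left over is $n=12$ in part (c), where Lemma \ref{min deg} only yields $\mu(H)\geq 5$, which is not enough to exclude a support of size $6$. This is the one real obstacle and has to be handled separately, by appealing to the classification of primitive proper subgroups of $A_{12}$. These are (up to conjugacy) $M_{12}$, $M_{11}$ in its $3$-transitive degree-$12$ representation, and $\mathrm{PSL}_2(11)$ acting on the projective line; each of these has minimum degree strictly greater than $6$, so none can contain an element whose square has support of size $6$, ruling out the cycle type $[2,\ldots,2,6]$.
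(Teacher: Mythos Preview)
Your proof is correct and follows essentially the same route as the paper: take a suitable power of $\sigma$ to obtain a short-support element, invoke the Liebeck--Saxl minimum-degree bound (Lemma~\ref{min deg}), and for part (a) finish with Lemma~\ref{min-prime-odd}. The only cosmetic difference is at $n=12$ in part (c): the paper simply observes that $M_{12}$ is the unique (maximal) primitive proper subgroup of $A_{12}$ and checks directly from the \textsc{Atlas} that it has no element of type $[2,2,2,6]$, whereas you list all three primitive proper subgroups $M_{12}$, $M_{11}$, $PSL_2(11)$ and rule them out via their minimum degrees---both arguments are valid and amount to the same verification.
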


\begin{proof}(a) Let $n$ be odd and not a prime, the square of a prime or the product of twin primes. Let $p$ be the smallest prime divisor of $n$, so $p\geq 3$. Note that necessarily $n\geq 21.$ Let $H<S_n$ be primitive and $\sigma\in H$ a permutation of type $[p,\dots,p, 2p]$. Since $\sigma$ is an odd permutation, $H\not\geq A_n,$ as otherwise $H=S_n.$ Now $H$ contains also $\sigma^p,$ a permutation of type $[1,\dots,1,2,\dots,2]$ which moves $2p$ points. Thus $\mu(H)\leq 2p$. Since by Theorem \ref{min deg},  $\mu(H)>2(\sqrt{n}-1)$, it follows that $p>\sqrt{n}-1$  contradicting Lemma \ref{min-prime-odd}.

\medskip\noindent
(b) Let $n\geq 10$ be even with $4\nmid n$ and $H<A_n$ primitive with $[2,\dots,2, 4]\in H.$ Then $H$ contains also a permutation of type $[1,\dots,1,2,2]$ and, as in part (a) we get
$4>2(\sqrt{n}-1),$ which gives $n<9,$ a contradiction.

\medskip\noindent
(c) If $n=12$ the only primitive proper subgroup of $A_{12}$ is $M_{12}$ which does not contain permutations of type $[2,2,2,6]$ (see \cite{WAT}).
So suppose that $n\geq 16$ is even with $4\mid n$, and that $H<A_n$ is primitive with $[2,\dots,2, 6]\in H.$
Then $H$ contains also a permutation of type $[1,\dots,1,3,3]$ and, arguing as before, we get
$6>2(\sqrt{n}-1),$ which gives $n<16,$ contrary to our assumption.  \end{proof}

\subsection{Projective linear groups}\label{sub:singer}

In order to get new exact values for the gamma function when the degree is the product of at most two prime numbers, we need to characterise the projective linear groups containing $(n-1)$-cycles.

\begin{lemma}\label{nminusone}
Let $H$ be a group such that $PSL_d(q)\leq H\leq P\Gamma L_d(q),$ with $d\geq 2$, acting on the set $\Omega$ of $n=(q^d-1)/(q-1)$ points or hyperplanes. Then $H$ contains an $(n-1)$-cycle if and only if $d=2$ and either $q$ is a prime or $(q,H)=(4,P\Gamma L_2(4))$.
\end{lemma}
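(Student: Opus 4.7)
The plan is to treat the cases $d=2$ and $d\geq 3$ separately.

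\smallskip
\noindent\textbf{For $d=2$.} Here $n-1=q$, and any $q$-cycle fixes exactly one projective point $P$ and acts regularly on the remaining $q$ points, so I will look for a cyclic subgroup of order $q$ inside the stabilizer $A\Gamma L_1(q)$ of $P$ in $P\Gamma L_2(q)$. Write $q=p^f$ and consider a general $p$-element $\tau_a\phi^s$ of $A\Gamma L_1(q)$, with $\tau_a$ a translation and $\phi$ the Frobenius. A direct calculation shows $(\tau_a\phi^s)^{p^e}$ is a translation by a trace-like sum, from which I will read off that the maximum $p$-order in $A\Gamma L_1(q)$ is $p^{v_p(f)+1}$. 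Imposing $p^{v_p(f)+1}=q=p^f$ then yields exactly the solutions $f=1$ (so $q$ prime) and $(p,f)=(2,2)$ (so $q=4$). For $q$ prime the required $q$-cycle is a translation already lying in $PSL_2(q)\leq H$; for $q=4$ the $C_4$ inside $A\Gamma L_1(4)\cong S_4$ is generated by a $4$-cycle, which, being odd, lies in $P\Gamma L_2(4)\cong S_5$ but not in $PSL_2(4)=A_5$, forcing $H=P\Gamma L_2(4)$.

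\smallskip
\noindent\textbf{For $d\geq 3$.} I will suppose for contradiction that $g\in H$ has cycle type $(1,n-1)$ with fixed projective point $P=[v]$, $L=\langle v\rangle$. Write $n-1=qm$ with $m=(q^{d-1}-1)/(q-1)$ coprime to $p$, and consider the $p$-part $g_p=g^m$, of order $q$ and cycle type $(1,q^m)$. The key structural step is to show $g_p$ lies in the subgroup $S$ of $PGL_d(q)$ consisting of images of matrices $\left(\begin{smallmatrix}a&\ast\\0&I\end{smallmatrix}\right)$, which is isomorphic to $\mathbb{F}_q^{d-1}\rtimes \mathbb{F}_q^*$ of order $(q-1)q^{d-1}$. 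Since the induced action $\bar g$ on the $m=|PG_{d-2}(q)|$ lines through $P$ is cyclic and transitive of order $m$, we get $\bar g_p=\bar g^m=1$, so $g_p$ acts trivially on $P(V/L)$. For $d\geq 3$ we have $\dim V/L\geq 2$, and a semilinearity computation (using $\mu_{w_1+w_2}=\mu_{w_1}=\mu_{w_2}$ for independent $w_1,w_2$, combined with $\mu_{\lambda w}=\sigma(\lambda)\mu_w/\lambda$ forced by $\sigma$-semilinearity) will then show any element acting trivially on $P(V/L)$ must have trivial Galois component, so $g_p\in S$.

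\smallskip
I now split on $q$. If $q$ is prime, then $g_p$ is a $p$-element of $PGL_d(p)$ with a unique projective fixed point, and primary decomposition together with the $1$-dimensional eigenspace condition forces $g_p$ to be the image of a regular unipotent Jordan block of size $d$. Its centralizer in $PGL_d(p)$, the image of $\mathbb{F}_p[\tilde g_p]^\times$, is a $p$-group of order $q^{d-1}$; since $g_{p'}=g^q$ commutes with $g_p$ but has $p'$-order $m\geq p+1>1$, this is impossible. If $q$ is not prime, then restricting $g_p$ to any $2$-dimensional $U\supset L$ gives a $q$-cycle on $PU\setminus\{L\}$ in $P\Gamma L_2(q)$, and the $d=2$ case of the lemma forces $q=4$; but a direct computation in $S\cong \mathbb{F}_4^{d-1}\rtimes \mathbb{F}_4^*$, exploiting $1+a+a^2=0$ for $a\in \mathbb{F}_4\setminus\{0,1\}$, shows every non-identity element of $S$ has order $2$ or $3$, contradicting $g_p\in S$ of order $4$. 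The main obstacle in this plan is the semilinearity step establishing that $g_p$ has no Galois twist, since both sub-case contradictions in the $d\geq 3$ analysis rely on placing $g_p$ inside the linear subgroup $S$.
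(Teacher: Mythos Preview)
Your proof is correct and takes a genuinely different, more self-contained route than the paper's. For $d=2$, the paper invokes the Burnside--Galois--Schur classification (Lemma~\ref{Feit}(a)) of soluble primitive groups containing a regular cycle, applied to the stabiliser $H_\alpha$; you instead compute directly that the maximal $p$-element order in $A\Gamma L_1(q)$ is $p^{v_p(f)+1}$ and solve $v_p(f)+1=f$. For $d\geq 3$, the paper appeals to Jones's result on regular cyclic subgroups of $P\Gamma L_{d-1}(q)$ to identify $\langle g^\Sigma\rangle$ as a Singer subgroup, deduces $q=p$, and then obtains a contradiction from the fixed-point-free conjugation action on the unipotent radical $N$; you instead first pin down $g_p$ inside the explicit group $S\cong \mathbb{F}_q^{d-1}\rtimes\mathbb{F}_q^*$ via the semilinearity calculation, and then finish with the regular-unipotent centraliser argument (for $q$ prime) or an elementary order bound in $S$ (for $q=4$). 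Your approach avoids both external classification results, at the cost of more hands-on computation; the paper's is shorter but leans on deeper cited theorems. Two minor remarks: in the $d=2$ step you are implicitly using that every $p$-element of $A\Gamma L_1(q)$ is conjugate into $T\rtimes(\text{Sylow }p\text{ of Gal})$, which is why the form $\tau_a\phi^s$ suffices; and in the $q$ not prime sub-case, since you have already shown $g_p\in S\leq PGL_d(q)$, the restriction $g_p|_U$ actually lands in $PGL_2(q)$, so the $d=2$ case already rules out $q=4$ and your separate order computation in $S$, while correct, is not needed.
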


\begin{proof}
If $d=2$ and $q$ is a prime, then $n=q+1$ and $PSL_2(q)$, and hence also $H$, contains a $q$-cycle. Also if $H=P\Gamma L_2(4)\cong S_5$, then $G$ contains a 4-cycle. Suppose conversely that $H$ is as in the statement and that $H$ contains an $(n-1)$-cycle $g$. Let $p$ be the prime dividing $q$, and let $\alpha\in\Omega$ be the point fixed by $g$. Suppose first that $d=2$. Then $H_\alpha$ is a soluble primitive permutation group on $\Omega\setminus\{\alpha\}$ of degree $n-1$ containing an $(n-1)$-cycle. Hence by Lemma~\ref{Feit}, either $q$ is prime or $q=4$ and $H_\alpha=S_4$. In the latter case, $H=P\Gamma L_2(4)$.

Thus we may assume that $d\geq3$. Then the set of lines containing $\alpha$ (with $\alpha$ removed from each) forms a set $\Sigma$ of blocks of imprimitivity for $H_\alpha$ in $\Omega\setminus\{\alpha\}$, each of size $q$.
The permutation group $L=H_\alpha^\Sigma$ induced by $H_\alpha$ on $\Sigma$ satisfies $PGL_{d-1}(q)\leq L\leq P\Gamma L_{d-1}(q)$ and $L$ contains an $\frac{n-1}{q}$-cycle $g^\Sigma$ (induced by $g$). By \cite[Corollary 2]{J}, either $\la g^\Sigma\ra$ is a Singer subgroup of $PGL_{d-1}(q)$, or $(d-1,q)=(2,8)$.

Now $\la g\ra$ induces a transitive abelian subrgoup $\la g^\Sigma\ra$ on $\Sigma$. Hence $\la g^\Sigma\ra$ is regular, and so $g^{(n-1)/q}$ fixes each of the lines through $\alpha$ setwise, and has order $q$. The subgroup of $P\Gamma L_d(q)_\alpha$ fixing all these lines setwise is an extension of an elementary abelian group $N$ of order $q^{d-1}$ by a cyclic group of order $q-1$. Since  $g^{(n-1)/q}$ is a $p$-element it follows that $g^{(n-1)/q}\in N$, and since $g^{(n-1)/q}$ has order $q$ while $N$ has exponent $p$, it follows that $q=p$ is prime. Thus $\la g^\Sigma\ra$ is a Singer subgroup of $PGL_{d-1}(q)$.  It follows that $\la g\ra$, acting by conjugation on $N$, is fixed point free. However  this contradicts the fact that $g$ centralises the non-identity element  $g^{(n-1)/q}$ of $N$.
\end{proof}

\section{
Basic facts on normal coverings}\label{normal general}
In this brief section we establish a link between $\gamma(S_n)$  and $\gamma(A_n)$ and derive upper bounds for $\gamma(S_n)$ and $\gamma(A_n)$.

\begin{lemma}
\label{SA} Any normal covering of $S_n$ with maximal components all different from $A_n$ defines, by intersection, a normal covering of $A_n.$ In particular if $\gamma(S_n)$ is realized by a normal covering in which all basic components are maximal subgroups and none equals $A_n$, then
$\gamma(A_n)\leq \gamma(S_n).$
\end{lemma}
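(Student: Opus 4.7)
The plan is to form the candidate normal covering of $A_n$ by taking $K_i:=H_i\cap A_n$ for each basic component $H_i$ of the given normal covering $\Sigma$ of $S_n$, and to show that the $K_i$ (together with their $A_n$-conjugates) cover $A_n$ with no more basic components than $\Sigma$ has. Three points must be checked: (1) each $K_i$ is a proper subgroup of $A_n$, (2) the $A_n$-conjugacy class of $K_i$ coincides with its $S_n$-conjugacy class (so that no extra classes are introduced), and (3) every $\sigma\in A_n$ lies in some $A_n$-conjugate of some $K_i$.

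Step (1) is immediate: since $H_i$ is maximal in $S_n$ with $H_i\neq A_n$, and since $A_n$ is itself maximal in $S_n$, we have $H_i\not\leq A_n$; consequently $H_iA_n=S_n$, so $|A_n:K_i|=|S_n:H_i|\geq2$, and $K_i<A_n$. Step (2) is where the hypothesis really bites: since $H_i$ is a maximal subgroup of $S_n$ distinct from $A_n$, Lemma~\ref{split}(b) gives $N_{S_n}(K_i)\not\leq A_n$, and then Lemma~\ref{split}(a) yields $K_i^{A_n}=K_i^{S_n}$. This is the key point that prevents $K_i$ from giving rise to two $A_n$-conjugacy classes of subgroups, which is what I expect to be the main (conceptual) obstacle. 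For step (3), given $\sigma\in A_n$, use that $\Sigma$ covers $S_n$ to find $g\in S_n$ and an index $i$ with $\sigma\in H_i^g$; then $\sigma\in H_i^g\cap A_n=(H_i\cap A_n)^g=K_i^g$ (using the normality of $A_n$ in $S_n$), and by step (2) there exists $a\in A_n$ with $K_i^g=K_i^a$, so $\sigma\in K_i^a$.

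Thus $\{K_i^a\mid 1\leq i\leq k,\ a\in A_n\}$ is an $A_n$-conjugation invariant covering of $A_n$ by proper subgroups, i.e., a normal covering of $A_n$. This proves the first assertion. For the second assertion, take the given covering to realise $\gamma(S_n)$, so $k=\gamma(S_n)$. After discarding duplicates among the $K_i$ that happen to be $A_n$-conjugate to one another, one obtains a basic set for $A_n$ of size at most $k$, whence $\gamma(A_n)\leq\gamma(S_n)$.
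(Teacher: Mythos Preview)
Your proof is correct and follows essentially the same approach as the paper: both arguments set $K_i=H_i\cap A_n$, invoke Lemma~\ref{split}(b) (maximality of $H_i\neq A_n$ gives $N_{S_n}(K_i)\not\leq A_n$) and then Lemma~\ref{split}(a) (so $K_i^{A_n}=K_i^{S_n}$) to pass from the $S_n$-conjugates covering $A_n$ to the $A_n$-conjugates doing so. Your version is simply more explicit, in particular in checking that each $K_i$ is proper in $A_n$ and in noting that possible $A_n$-conjugacies among the $K_i$ can only decrease the size of the resulting basic set.
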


\begin{proof}  Let $\delta=\{H_1,\dots,H_k\}$ be a basic set for $S_n$  with $H_i$ maximal in $S_n$ and $H_i\neq A_n$ for any $i=1,\dots, k.$ Then, by Lemma \ref{split}(a) and (b),
$$
A_n=A_n\cap [\,\bigcup_{g\in S_n} \bigcup_{i=1}^k H_i^g\,]=\bigcup_{g\in S_n} \bigcup_{i=1}^k [A_n\cap H_i]^g=\bigcup_{g\in A_n} \bigcup_{i=1}^k [A_n\cap H_i]^g
$$
which means that $\delta_{A_n}=\{A_n\cap H_1,\dots,A_n\cap H_k\}$ generates a normal covering for $A_n.$\end{proof}

Sometimes we use the previous Lemma with no explicit reference. Here we apply it and Lemma~\ref{split} to obtain upper bounds for $\gamma(A_n)$ for any $n$ and for $\gamma(S_n)$ in the case when $n$ is not prime. The symmetric case for $n$ prime is completely determined in Proposition~\ref{sym prime}.

\begin{proposition} \label{upper} \begin{description}
\item[(a)] Let $n\in \mathbb{N}$ be composite, and let $p$ be the least prime divisor of $n$.
Then
{\openup 3pt
\[
      \gamma(S_n)\leq \left\{\begin{array}{ll}
                              1+\frac{n}{2}\left(1-\frac{1}{p}\right)\leq \frac{n-1}{2}&\mbox{if $n$ is odd}\\
                              & \\

\left\lfloor\frac{n+4}{4}\right\rfloor &\mbox{if $n$ is even}\\
                             \end{array}\right.
     \]
}\noindent
and the set
\[
\delta(A)=\{ A,\ S_k\times S_{n-k}\ :\ 1\leq k<n/2,\ p\nmid k\}
\]
for $A:= S_p\wr S_{n/p}$ and for $A:=S_{n/p}\wr S_p,$ is a basic set for $S_n.$
\item[(b)] Let $n\geq 4.$ Then

{\openup 3pt
\[
      \gamma(A_n)\leq \left\{\begin{array}{ll}
\left\lfloor\frac{n+3}{3}\right\rfloor &\mbox{if $n$ is odd}\\
  & \\
\left\lfloor\frac{n+4}{4}\right\rfloor &\mbox{if $n$ is even.}\\
                             \end{array}\right.
\]
}\noindent
If $n$ is even, the set $\delta(A)$ defines a basic set for $A_n$ by intersection, for each $A$, while if $n$ is odd then, for any maximal subgroup $K$ of $S_n$ containing an $n$-cycle and not equal to $A_n$ (such $K$ exist),  the set
\[
\widehat{\delta}(K)=\{ K\cap A_n,\ [S_k\times S_{n-k}]\cap A_n\ :\ 1\leq k\leq \left\lfloor n/3\right\rfloor\}
\]
 is a basic set for $A_n.$
\end{description}
\end{proposition}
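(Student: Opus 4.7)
\emph{Part (a).} The strategy is to verify that $\delta(A)$ is a basic set for $S_n$ and then count it. Pairwise non-conjugacy is automatic: the intransitive pieces $S_k\times S_{n-k}$ with $k<n/2$ are distinguished by their orbit-size partitions, while $A$ is transitive; all are maximal in $S_n$. To show every cycle type $L=[\ell_1,\ldots,\ell_r]$ (realised by some $\sigma=\gamma_1\cdots\gamma_r$, with $\gamma_i$ an $\ell_i$-cycle) is hit, I split on divisibility of the parts by $p$. If every $\ell_i$ is divisible by $p$, then $\sigma$ lies in a conjugate of $A$ for either wreath form: in $S_p\wr S_{n/p}$ via the invariant partition obtained by cutting each $\gamma_i$ into $\ell_i/p$ consecutive arcs of length $p$; in $S_{n/p}\wr S_p$ by labelling the points of $\gamma_i$ cyclically $0,\ldots,\ell_i-1$ and sending the point labelled $j$ to block $j\bmod p$. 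Otherwise some $\ell_j$ is not divisible by $p$, and since $p\mid n$ we have $r\geq 2$; I would then seek a subset $S\subsetneq\{1,\ldots,r\}$ with $1\leq k:=\sum_{i\in S}\ell_i<n/2$ and $p\nmid k$, placing $\sigma$ in a conjugate of $S_k\times S_{n-k}$. One takes $S=\{j\}$ if $\ell_j<n/2$, and $S=\{1,\ldots,r\}\setminus\{j\}$ if $\ell_j>n/2$ (then $k=n-\ell_j$ and $p\nmid k$ because $p\mid n$ and $p\nmid \ell_j$). The borderline $\ell_j=n/2$ forces $p=2$, and a parity count (the odd parts of $L$ sum to the even integer $n$, so their number is even) pins the cycle type to $[n/2,n/2]$ with $n/2$ odd; for this lone type I would check by hand that $\sigma$ lies in both wreath forms of $A$ --- in $S_2\wr S_{n/2}$ by pairing the $i$-th point of the first cycle with the $i$-th point of the second, and in $S_{n/2}\wr S_2$ by taking each of the two cycles as a block. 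A direct count then gives $|\delta(A)|=1+(n/2)(1-1/p)$ when $n$ is odd composite (using that $n=pm$ with $m$ odd makes $(n-p)/(2p)$ an integer), and $|\delta(A)|=\lfloor(n+4)/4\rfloor$ when $n$ is even; the inequality $1+(n/2)(1-1/p)\leq(n-1)/2$ rearranges to $n\geq 3p$, which holds as $n=pm$ composite odd forces $m\geq p\geq 3$.

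\emph{Part (b).} When $n$ is even, every component of $\delta(A)$ is maximal in $S_n$ and contains a transposition (hence differs from $A_n$), so Lemma~\ref{SA} immediately gives $\gamma(A_n)\leq|\delta(A)|=\lfloor(n+4)/4\rfloor$. When $n$ is odd, I would first note the existence of $K$: take $K=AGL_1(n)$ if $n$ is prime, or $K=S_a\wr S_b$ with $ab=n$, $a,b\geq 2$ if $n$ is composite; each such $K$ is maximal in $S_n$, contains an $n$-cycle, and is distinct from $A_n$. To see that $\widehat{\delta}(K)$ is basic for $A_n$, observe that $n$ odd forces each cycle type in $A_n$ to have an odd number $r$ of parts. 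If $r=1$, $\sigma$ is an $n$-cycle (even, since $n$ is odd) and lies in $K\cap A_n$; if $r\geq 3$, the smallest part is $\leq\lfloor n/3\rfloor$ and $\sigma$ lies in the corresponding $[S_k\times S_{n-k}]\cap A_n$. Split types demand both $A_n$-classes: each component has the form $L\cap A_n$ for some $L$ maximal in $S_n$ with $L\neq A_n$, so Lemma~\ref{split}(b) delivers $N_{S_n}(L\cap A_n)\not\leq A_n$, and Lemma~\ref{split}(c) then ensures both split classes lie in the component. The size $|\widehat{\delta}(K)|=1+\lfloor n/3\rfloor=\lfloor(n+3)/3\rfloor$ yields the bound.

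The main obstacle I expect is the parity bookkeeping in Case~(B) of part~(a): one must isolate the exceptional cycle type $[n/2,n/2]$ with $n/2$ odd as the sole type not captured by an intransitive $S_k\times S_{n-k}$ with $k<n/2$ and $p\nmid k$, and then verify by hand that this type lies in both wreath realisations of $A$, so that a uniform statement holds for either choice of $A$.
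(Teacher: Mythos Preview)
Your proof is correct and follows essentially the same strategy as the paper: verify that $\delta(A)$ (respectively $\widehat{\delta}(K)$) is a basic set by case analysis on the divisibility of cycle lengths by $p$, then count. The only organisational difference is that the paper splits according to the number $r$ of parts ($r=1$, $r=2$, $r\geq 3$); for $r\geq 3$ the paper observes that if the chosen $\ell_j\geq n/2$ with $p\nmid\ell_j$, then some \emph{other} part $\ell_i<n/2$ also satisfies $p\nmid\ell_i$ (since $p\mid n$ forces $p\nmid\sum_{i\neq j}\ell_i$), which sidesteps the $[n/2,n/2]$ edge case entirely. You instead isolate $[n/2,n/2]$ explicitly and verify it lies in both wreath forms by hand, which is fine and arguably more careful---indeed the paper's dichotomy for $r=2$ (``$[k,n-k]\in B_k$ if $p\nmid k$, and $\in A$ otherwise'') glosses over exactly this case. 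Part (b) is identical to the paper's argument.
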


\begin{proof}
(a) Both $A:= S_p\wr S_{n/p}$ and $A':=S_{n/p}\wr S_p$ are maximal subgroups of $S_n.$ For $1\leq k<n/2$ with $k$ coprime to $p$, let $B_k:=S_k\times S_{n-k}$. To prove that both $\delta(A)$ and $\delta(A')$ are basic sets for $S_n$, we note first that each $n$-cycle lies in some conjugate of $A$ and in some conjugate of $A'$. Next, the type $[k,n-k]\in B_k$ if $p\nmid k$, and $[k,n-k]\in A, A'$ otherwise. Now consider a type $L=[\ell_1,\dots,\ell_r]$ with $r\geq3$ and $\sum_{i=1}^r\ell_i=n$. If each $\ell_i$ is divisible by $p$ then $L\in A$ and $L\in A'$. So assume that there exists $\ell_j$ with $p\nmid \ell_j$. If $\ell_j<n/2$ then $L\in B_{\ell_j}.$ If $\ell_j\geq n/2$ then, since $r\geq3$  all the other entries are less than $n/2;$ since $p\mid n,$ there exists one of them $\ell_i$ such that $p\nmid \ell_i$ and thus $L\in B_{\ell_i}$. Thus $\gamma(S_n)\leq |\delta(A)|=|\delta(A')|$.

The set $U:=\{ k\ :\ 1\leq k<n/2,\ p\nmid k\}$ is in one-to-one correspondence with $U':=\{ k\ :\ n/2<k\leq n,\ p\nmid k\}$, and the sets $U$ and $U'$ are disjoint. The set of all integers less than $n$ and coprime to $p$ is equal to $U\cup U'$ unless $p=2$ and $4 \nmid n$, in which case it is $U\cup U'\cup\{n/2\}$. Thus  if either $n$ is odd or $n$ is divisible by $4$, we have
\[
|\delta(A)|=1+|U| = 1 + \frac{n}{2}\left(1-\frac{1}{p}\right)
\]
which is $\frac{n+4}{4}$ if $n$ is divisible by $4$ (since then $p=2$), and at most $\frac{n-1}{2}$ if $n$ is odd (since then $n/p\geq3$). On the other hand if $n\equiv 2\pmod{4}$, then
\[
|\delta(A)|=1+|U| = 1 + \frac{n-2}{4} = \left\lfloor\frac{n+4}{4}\right\rfloor.
\]

(b) By Lemma~\ref{SA}, if $n$ is composite, then each of $\delta(A)$ and $\delta (A')$ defines by intersection a normal covering of $A_n$ and hence also $\gamma(A_n)\leq |\delta(A)|$. All assertions are now proved if $n$ is even, so assume that $n$ is odd and $n\geq5$. Then $A_n$ does not contain permutations of type $[k,n-k]$ for any $k$. Moreover the $n$-cycles of $A_n$ split into two $A_n$-conjugacy classes and we claim that all $n$-cycles can be covered by a single basic component. There exists a maximal subgroup $K$ of $S_n$ such that $K\neq A_n$ and $K$ contains an $n$-cycle: if $n$ is prime, take $K=AGL_1(n)$ and if
$n$ is composite, take $K=S_q\wr S_{n/q},$ for any prime divisor $q$ of $n.$ Then, by Lemma \ref{split} (b) and (c), $H=K\cap A_n<A_n$ contains elements from both the $A_n$-classes of $n$-cycles. This proves the claim.
All other types occurring in $A_n$ have at least three entries and hence are covered by some subgroup $[S_k\times S_{n-k}]\cap A_n$ where $k$ is an integer with $1\leq k\leq n/3$. Thus $\gamma(A_n)\leq \left\lfloor\frac{n+3}{3}\right\rfloor.$ We note that in general, if $n$ is composite, then this upper bound is better than the upper bound $|\delta(A)|$,  since for $p$ odd we have $1 + \frac{n}{2}\left(1-\frac{1}{p}\right)\geq \frac{n+3}{3},$ with equality if and only if the smallest prime dividing $n$ is $p=3.$
\end{proof}

\section{
2-Cycle decompositions and intransitive components.}\label{intrans}

In this section we consider permutations $\sigma_k\in S_n,\ n\geq 5$ of type
$[k,n-k]$ with $k$ belonging to

\begin{equation}\label{Un}
\mathcal {U}_n=\{k\in \mathbb N\quad:\quad (k,n)=1, \quad 2\leq k<n/2\}.
\end{equation}

\begin{lemma}\label{lem:Un}
The set of types
$$
U_n=\{[k,n-k]\quad:\quad k\in \mathcal {U}_n\}
$$
is in one-to-one correspondence with $\mathcal{U}_n$, and $|\mathcal{U}_n|=\frac{\phi(n)}{2}-1$.
\end{lemma}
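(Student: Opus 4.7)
The plan is straightforward: establish a bijection to get the first assertion, and count by exploiting the involution $k\mapsto n-k$ on the set of residues coprime to $n$.

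First I would verify that the map $\Phi:\mathcal{U}_n\to U_n$ defined by $\Phi(k)=[k,n-k]$ is a bijection. Surjectivity is automatic from the definition of $U_n$. For injectivity, suppose $[k,n-k]=[k',n-k']$ as cycle types (i.e.\ as unordered multisets). Then either $k=k'$ or $k=n-k'$. In the second case, since $k'<n/2$ we would have $k=n-k'>n/2$, contradicting $k\in \mathcal{U}_n$. Hence $k=k'$.

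Next I would count $|\mathcal{U}_n|$ by a pairing argument on the set $C_n=\{k\in\mathbb{N}\,:\,1\leq k\leq n-1,\,(k,n)=1\}$, which has cardinality $\phi(n)$ (note that $(n,n)=n>1$ for $n\geq 2$, so the element $n$ does not contribute). The involution $k\mapsto n-k$ acts on $C_n$, since $(n-k,n)=(k,n)=1$. A fixed point would require $k=n/2$, but then $n/2\mid n$ contradicts $(k,n)=1$ (using $n\geq5$). Therefore the involution is fixed-point-free and pairs up the elements of $C_n$ into $\phi(n)/2$ pairs $\{k,n-k\}$, each with exactly one representative in $[1,n/2)$. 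Hence
\[
|\{k\in\mathbb{N}\,:\,1\leq k<n/2,\,(k,n)=1\}|=\frac{\phi(n)}{2}.
\]
Since $k=1$ is always coprime to $n$ and lies in this set but not in $\mathcal{U}_n$, and since no other element of $[1,n/2)$ is excluded by the condition $k\geq 2$, we conclude that $|\mathcal{U}_n|=\phi(n)/2-1$.

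There is essentially no obstacle here; the only subtlety is the ``boundary'' check that $k=n/2$ can never be coprime to $n$ (so the involution has no fixed points), and that the hypothesis $n\geq 5$ guarantees $\phi(n)/2-1\geq 0$ so the statement is non-vacuous.
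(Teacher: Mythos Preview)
Your proof is correct and follows essentially the same approach as the paper's own proof: both establish injectivity of $k\mapsto[k,n-k]$ via the inequality $k<n/2<n-k'$, and both count $|\mathcal{U}_n|$ by pairing $k$ with $n-k$ among the residues coprime to $n$ (the paper phrases this as an explicit bijection between $\{k:(k,n)=1,\,1\le k<n/2\}$ and $\{k:(k,n)=1,\,n/2<k\le n-1\}$, you phrase it as a fixed-point-free involution, but the content is identical).
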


\begin{proof}
If distinct $k_1, k_2\in \mathcal {U}_n$ correspond to equal types $[k_1,n-k_1]$, $[k_2,n-k_2]$, then $k_1=n-k_2, $ which cannot happen since $ k_1<n/2$ while
$n-k_2>n/2.$ To find $|\mathcal {U}_n|,$ we consider the sets
 $$\begin{array}{lll}
\mathcal {U}_n^1&=&\{k\in \mathbb N\ :\ (k,n)=1, \  1\leq k<n/2\}\\
\mathcal {U}_n^2&=&\{k\in \mathbb N\ :\  (k,n)=1, \  n/2 <k\leq n-1\}.
   \end{array}
$$
   Since $(n-x,n)=(x,n),$ the map $f:\mathcal {U}_n^1\rightarrow \mathcal {U}_n^2$ defined
   by $f(x)=n-x$ is a bijection between $\mathcal {U}_n^1$ and $\mathcal {U}_n^2.$
    Moreover $\mathcal {U}_n^1\cap \mathcal {U}_n^2=\varnothing$ and
    $$\mathcal {U}_n^1\cup \mathcal {U}_n^2=\{k\in \mathbb N\ :\  (k,n)=1, \
     1\leq k\leq n\}.$$ Thus $|\mathcal {U}_n^1|= |\mathcal {U}_n^2|=
     \displaystyle{\frac{\phi(n)}{2}}$ and $|\mathcal {U}_n|=|\mathcal {U}_n^1|-1=
      \displaystyle{\frac{\phi(n)}{2}}-1.$\\
\end{proof}

The $\sigma_k$ typically lie in $S_k\times S_{n-k}$ and we will show that, with one exception in the even case, $S_k\times S_{n-k}$ is the only maximal subgroup which can contain $\sigma_k$.
\begin{lemma} Let $\sigma_k\in S_n, n\geq 5$ be a permutation of type $[k,n-k]\in U_n$  and $H$ a maximal subgroup of $G=S_n$ or $A_n$ containing $\sigma_k.$ Then we have the following:

\begin{description}
\item[(a)] if $n$ is odd  then $G=S_n$ and $H=S_k\times S_{n-k};$
\item[(b)] if $n$ is even and $G=S_n,$ then $H=S_k\times S_{n-k}$ or $H=A_n;$
\item[(c)] if $n$ is even and $G=A_n,$ then $H=[S_k\times S_{n-k}]\cap A_n.$
\end{description}
In particular any basic set for $G=S_n,$ with $n$ odd, contains
properly
$$
\{S_k\times S_{n-k}\quad:\quad k\in\mathcal{U}_n\}
$$
 and any basic set for $G=A_n$ with $n$ even, contains properly
$$
\{[S_k\times S_{n-k}]\cap A_n\quad:\quad k\in\mathcal{U}_n\}.
$$
\label{two cycles}
\end{lemma}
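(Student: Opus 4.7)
The plan is to classify the maximal subgroups $H\leq G$ containing $\sigma_k$ by splitting on the action of $H$ on $\Omega$: intransitive, transitive imprimitive, or primitive. A quick preliminary observation is that $\sigma_k$ has sign $(-1)^{(k-1)+(n-k-1)}=(-1)^n$, so $\sigma_k\in A_n$ precisely when $n$ is even, which at once rules out $G=A_n$ in case (a).

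If $H$ is intransitive then, since the two $\langle\sigma_k\rangle$-orbits have coprime sizes $k$ and $n-k$, they cannot fuse, and so $H$ preserves the partition $\{\Delta,\Omega\setminus\Delta\}$ with $|\Delta|=k$; hence $H\leq S_k\times S_{n-k}$. Because $k\neq n-k$, the product $S_k\times S_{n-k}$ is maximal in $S_n$, and $[S_k\times S_{n-k}]\cap A_n$ is maximal in $A_n$ (as $S_k\times S_{n-k}$ contains an odd permutation, e.g.\ a transposition in $S_k$, so is not contained in $A_n$). Maximality of $H$ in $G$ then pins down $H$ in each case.

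If $H$ is transitive and imprimitive, with blocks of size $d$, $1<d<n$, I apply Lemma~\ref{imprimitive} to each cycle of $\sigma_k$: the blocks meeting the support of the $k$-cycle all intersect it in a common number $k/t$, with $t\mid k$, and similarly $s\mid n-k$ for the $(n-k)$-cycle. By part (a) of that lemma the two block-collections are either disjoint or identical. In the disjoint case the blocks partition each cycle-support separately, forcing $d\mid k$ and $d\mid n-k$, contradicting $\gcd(k,n-k)=1$; in the identical case one has $t=s$ dividing both $k$ and $n-k$, so $t=1$ and $d=n$, again impossible. If $H$ is primitive, then $\sigma_k^{\,n-k}$ is a single $k$-cycle (since $\gcd(k,n-k)=1$) fixing a set $\Delta'$ of size $n-k$, so $H_{(\Delta')}$ acts transitively on the complement, of size $k$, with $2\leq k<n/2$. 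Lemma~\ref{WI 13.5} then forces $H\supseteq A_n$: this is incompatible with $H<A_n$, ruling out the primitive case entirely for $G=A_n$; for $G=S_n$ it leaves only $H=A_n$, which in turn requires $n$ even. This completes the casework for parts (a)--(c).

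The ``contains properly'' assertions will follow by exhibiting, in each of the two settings, a type in $G$ that is absent from every listed subgroup. For $G=S_n$ with $n$ odd, any $n$-cycle is even but transitive, so lies in no intransitive $S_k\times S_{n-k}$; for $G=A_n$ with $n$ even, a permutation of type $[1,n-1]$ is even but has an orbit of size $1$, so lies in no $[S_k\times S_{n-k}]\cap A_n$ with $k\geq2$. A basic set therefore requires at least one further component beyond the listed intransitive ones. The only step that deserves real care is the imprimitive analysis, but the dichotomy of Lemma~\ref{imprimitive}(a) combined with $\gcd(k,n-k)=1$ closes it immediately, so the true content of the lemma lies in extracting the $k$-cycle $\sigma_k^{\,n-k}$ and applying Lemma~\ref{WI 13.5} in the primitive case.
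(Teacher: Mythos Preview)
Your proof is correct and follows essentially the same route as the paper's: split into intransitive, imprimitive, and primitive cases, kill the imprimitive case via Lemma~\ref{imprimitive} together with $(k,n-k)=1$, and in the primitive case extract the $k$-cycle $\sigma_k^{\,n-k}$ and invoke Lemma~\ref{WI 13.5} to force $H\geq A_n$. The only cosmetic difference is that the coprimality remark in your intransitive case is superfluous (intransitivity alone forces $H$ to preserve the two $\langle\sigma_k\rangle$-orbits), and the paper phrases the ``contains properly'' step by noting that no intransitive component contains an $n$-cycle or an $(n-1)$-cycle, exactly as you do.
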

\begin{proof} If $n$ is odd then $\sigma_k$ is an odd permutation and hence $G=S_n$ in this case. If $H$ is intransitive then, up to conjugacy, $H=[S_k\times S_{n-k}]\cap G$.

Assume now that $H$ is transitive. Suppose first that $H$ is imprimitive preserving a partition with $m\geq 2$ blocks of size $b\geq 2$, where $n=b\,m.$ By Lemma \ref {imprimitive}, each of the two cycles of $\sigma_k$ meets either the same set of blocks or two disjoint sets of blocks. Hence $m\mid (k, n-k)=1$ or $b\mid (k, n-k)=1$ respectively, which is a contradiction.
Thus $H$ is primitive on $\Omega=\{1,\dots,n\},$ and  we may assume that
$$
\sigma_k=(1,\dots, k)(k+1,\dots,n)\in H.
$$
Consider the partition $\Omega=\Gamma\, \cup\, \Delta,$ where $\Gamma=\{1,\dots, k\}$ and $\Delta=\{k+1,\dots,n\}.$ Since $(k,n)=1,$ we have that $\sigma_k^{n-k}$ is a $k$-cycle in $H$. This implies that the pointwise stabilizer $H_{(\Delta)}$ of $\Delta$ in $H$ is transitive on $\Gamma,$ with $1<|\Gamma|=k<n/2= |\Omega|/2$. Thus Lemma \ref{WI 13.5} applies giving, in the case $n$ even, $H=A_n$  and $G=S_n$ as in (b), and in case $n$ odd a contradiction. Parts (a) - (c) are now proved.

Since the only intransitive maximal subgroup of $G$ containing $\sigma_k$ is, up to conjugacy, $H=[S_k\times S_{n-k}]\cap G$, it follows that all the components $[S_k\times S_{n-k}]\cap G$ with $k\in \mathcal{U}_n$ are mandatory for $G=S_n$ if $n$ is odd and  for $G=A_n$ if $n$ is even. Since none of these groups contains an $n$-cycle or an $(n-1)$-cycle, there must be an additional component in either of these cases.
\end{proof}

\begin{proposition} \label{gamma twocycles}\begin{description}
\item[(a)] If $n\geq5$ is odd then $\gamma(S_n)\geq \phi(n)/2$, and if in addition
$n$ is not a prime, then
$\gamma(S_n)\geq 1+\phi(n)/2$.
\item[(b)] If $n\geq 4$ is even and $n\neq 8,$ then
$\gamma(A_n)\geq 1+\phi(n)/2$.
\end{description}
\end{proposition}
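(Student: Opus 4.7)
The plan is to derive the weak lower bounds $\gamma(S_n)\ge\phi(n)/2$ (for $n$ odd) and $\gamma(A_n)\ge\phi(n)/2$ (for $n$ even) directly from Lemma~\ref{two cycles} combined with Lemma~\ref{lem:Un}: any basic set properly contains the family $\mathcal{M}_n$ of $|\mathcal{U}_n|=\phi(n)/2-1$ mandatory intransitive components. This already settles part~(a) when $n$ is prime. To strengthen the bound by one in each of the remaining cases, I will exhibit two cycle types $T_1$ and $T_2$ of elements of $G$ not realised in $\mathcal{M}_n$, and show that no single maximal subgroup of $G$ contains elements of both types. This forces at least two further basic components beyond the mandatory ones.

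For part~(a) with $n$ odd composite, I would take $T_1=[n]$ and $T_2=[1,n-1]$. Neither is a subset sum of the parts of any $[k,n-k]$ with $k\in\mathcal{U}_n\subseteq[2,n/2)$. A maximal subgroup $H<S_n$ containing both is transitive because of $T_1$, and imprimitivity with block size $k$, $1<k<n$, is excluded by applying Lemma~\ref{imprimitive}(b) to the unique non-trivial cycle of an $(n-1)$-cycle: the block containing its fixed point meets the support in $k-1$ points while every other block would meet it in $k$ points, forcing the whole support into a single block and hence $n-1\le k-1$. So $H$ is primitive, and Lemma~\ref{Feit} together with $n$ odd composite reduces $H$ to $A_n$ or $PGL_d(q)\le H\le P\Gamma L_d(q)$ with $n=(q^d-1)/(q-1)$. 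The first is excluded because $(n-1)$-cycles are odd permutations when $n$ is odd; the second is excluded by Lemma~\ref{nminusone}, which forces either $d=2$ with $q$ prime (and $n=q+1$ can be odd only for $q=2$, giving $n=3$) or $(q,H)=(4,P\Gamma L_2(4))$ with $n=5$ prime.

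For part~(b) with $n$ even and $n\ne8$, I would take $T_1=[1,n-1]$ and $T_2=[2,n-2]$: both are products of odd-length cycles and hence even, and neither is realised in $\mathcal{M}_n$. Intransitive maximal subgroups of $A_n$ split them neatly: up to conjugacy, $A_{n-1}$ is the only intransitive maximal subgroup containing $T_1$, yet all its elements have a fixed point, so $T_2\notin A_{n-1}$; similarly $(S_2\times S_{n-2})\cap A_n$ is the only intransitive maximal subgroup containing $T_2$, and it cannot contain an $(n-1)$-cycle because the invariant $2$-set would have to be stabilised by the cycle. A transitive $H$ containing $T_1$ is $2$-transitive (the stabilizer of the fixed point is transitive on the remaining $n-1$ points via the cycle), imprimitivity is dispatched exactly as in part~(a), and the primitive $2$-transitive candidates are then analysed by applying Lemma~\ref{Feit} to the point stabilizer $H_\alpha$ of degree $n-1$, which inherits an $(n-1)$-cycle, and by invoking Lemma~\ref{nminusone} to control the projective linear possibilities.

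The genuine obstacle is exactly this last step. When $n=2^d$ the affine group $AGL_d(2)\le A_n$ is primitive, $2$-transitive, and contains a Singer $(n-1)$-cycle; a direct orbit computation with the element $(e_0,g)$, where $g$ is a Singer cycle on a hyperplane fixing $e_0$, shows that $AGL_d(2)$ also contains an element of cycle type $[2,n-2]$. The case $n=8$ is thereby a genuine exception, accounting for its exclusion in the statement, and for $n=2^d$ with $d\ge4$ the pair $T_1,T_2$ above must be replaced; a workable alternative is to keep $T_1=[1,n-1]$ but take $T_2=[4^{n/4}]$, since any order-$4$ element of $AGL_d(2)$ arises from a Jordan block of size~$4$ in $GL_d(2)$ together with a translation, and the simultaneous requirements that the order equal $4$ (rather than $8$) and that the element be fixed-point-free turn out to be incompatible, excluding the cycle type $[4^{n/4}]$. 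Combining this refinement with a case analysis of the short list of almost simple $2$-transitive subgroups of $A_n$ that survives for $n$ not a power of $2$, in which the classification of finite simple groups enters implicitly through Lemma~\ref{Feit}, completes the argument.
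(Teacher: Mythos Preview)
Your argument for part~(a) is correct and in fact slightly cleaner than the paper's: you observe directly that an $(n-1)$-cycle is an odd permutation when $n$ is odd, which already rules out $H=A_n$, whereas the paper invokes the odd permutation of type $[p,\dots,p,2p]$ for the same purpose. Otherwise the two arguments coincide (Lemma~\ref{Feit} plus Lemma~\ref{nminusone}).

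Part~(b), however, has a genuine gap. Your replacement type $T_2=[4^{n/4}]$ for $n=2^d$ does \emph{not} avoid $AGL_d(2)$. The assertion that an order-$4$ element of $AGL_d(2)$ must have linear part a single Jordan block of size~$4$ is false: linear parts of Jordan type $[3,1]$ also yield order-$4$ affine maps, and these can be fixed-point-free with all orbits of length~$4$. Concretely, for $d=4$ take $A=J_3\oplus J_1\in GL_4(2)$ (so $Ae_1=e_1$, $Ae_2=e_1+e_2$, $Ae_3=e_2+e_3$, $Ae_4=e_4$) and $b=e_3$; the affine map $v\mapsto Av+e_3$ has the four orbits
\[
\{0,\,e_3,\,e_2,\,e_1{+}e_2{+}e_3\},\quad
\{e_1,\,e_1{+}e_3,\,e_1{+}e_2,\,e_2{+}e_3\},
\]
together with their translates by $e_4$, giving cycle type $[4,4,4,4]$. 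Thus $AGL_4(2)\le A_{16}$ contains both $[1,15]$ and $[4^4]$, and your pair $(T_1,T_2)$ fails to separate it, just as the pair $([1,n-1],[2,n-2])$ did. The same construction (Jordan type $[3,1^{d-3}]$ with $b=e_3$) works for every $d\ge3$, so no easy repair along these lines is available.

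The paper sidesteps this difficulty by choosing, alongside $[1,n-1]$ and $[n/2,n/2]$, the type $[2,\dots,2,4]$ (when $4\nmid n$) or $[2,\dots,2,6]$ (when $4\mid n$). Squaring such a permutation produces an element moving only $4$ or $6$ points, and the Liebeck--Saxl minimal-degree bound (Lemma~\ref{min deg}) then excludes every primitive proper subgroup of $A_n$ uniformly, with no classification-based case analysis at all. This is both shorter and avoids the affine obstruction you encountered.
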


\begin{proof}
If either $G=S_n$ with $n$ odd, or $G=A_n$ with $n$ even, then it follows from Lemma~\ref{two cycles} that $\gamma(G)\geq 1+|\mathcal{U}_n|$ with $\mathcal{U}_n$ as in (\ref{Un}). Thus by Lemma~\ref{lem:Un},  $\gamma(G)\geq \phi(n)/2$. We need a little more than this.

Suppose that $G=S_n$ with $n$ odd and not a prime,
and let $\delta$ be a basic set generating a minimal normal covering of $G$. By  Lemma~\ref{two cycles}, $\delta\supset
\{S_k\times S_{n-k}\ :\  k\in \mathcal {U}_n\}.$ Suppose that $\delta$ contains a single additional subgroup, $M$ say. Then $M$ must contain permutations of each type not occurring in $\{S_k\times S_{n-k}\ :\  k\in \mathcal {U}_n\}.$ In particular $M$ contains  at least one permutation of each of the following types: $[n],  \ [1,n-1]$
and  $[\ell_1,\dots,\ell_r]$, with $p\mid \ell_i$ for each $i=1,\dots, r$ and $r\geq 2$,
where $p$ is the smallest prime dividing $n$.  In particular $M$ is $2$-transitive and contains a permutation of type $[p,\dots,p,2p]$. An element of the latter type is an odd permutation and so $M\ne A_n$. Thus, by Lemma~\ref{Feit}, $PGL_d(q)\leq M\leq P\Gamma L_d(q)$ on the set $\Omega$ of $n=(q^d-1)/(q-1)$ points or hyperplanes, and since $M$ contains an $(n-1)$-cycle it follows from Lemma~\ref{nminusone} that $d=2$ and $q$ is prime. Thus $n=q+1$, and
since $n$ is odd and $q$ is prime, it follows that $q=2$ and $n=3$ is prime, which is a contradiction.
Thus we conclude that, if $n$ is odd and not a prime, then $\gamma(G)\geq 2+|\mathcal{U}_n|=1+\phi(n)/2$.

Now suppose that $G=A_n$ with $n$ even. For $n=4, 6$, we have from \cite{BU} that $\gamma(A_n)=2=1+\phi(n)/2$ (while $\gamma(A_8)=2 < 1+\phi(8)/2$). Thus we may assume that $n\geq10$. Let $\delta$ be  a basic set for $G$. By Lemma \ref{two cycles} (c), $\delta$ properly contains
$\{[S_k\times S_{n-k}]\cap A_n\ :\  k\in \mathcal {U}_n\}$. Suppose that $\delta$ contains  a single additional subgroup, $M$ say. Then $M$ must contain permutations of each type not occurring in $\{[S_k\times S_{n-k}]\cap A_n\ :\  k\in \mathcal {U}_n\}.$ In particular $M$ contains  at least one permutation of each of the following types:  $[n/2,n/2],  \ [1,n-1]$  and  $[\ell_1,\dots,\ell_r]$ with each $\ell_i$ even and $r$ even. A maximal intransitive subgroup is of the form $[S_k\times
S_{n-k}]\cap A_{n}$ for some $k<n/2$, so such a subgroup contains no permutation of type $[n/2,n/2]\in M.$ It follows that $M$ is transitive.
Since $[1,n-1]\in M,$ we have that $M$ is $2$-transitive and hence
primitive. By Lemma \ref{p-permutations}(b)(c), $M$ contains no permutation of type $[2,\dots,2,4],$ or of type
     $[2,\dots,2,6],$ a contradiction. Thus
$\gamma(A_n)\geq 2+|\mathcal{U}_n|=1+\phi(n)/2.$
\end{proof}

We finish this section by showing that minimal normal coverings almost always contain a transitive component.

\begin{corollary}\label{transitive component} If $n\geq 3$ and all components of a minimal normal covering of $G=A_n$ or $S_n$ are intransitive, then $n=4, G=A_4$ and the normal covering is generated by $\delta=\{A_3, C_2\}$.
\end{corollary}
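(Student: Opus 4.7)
\emph{Proof plan.} The strategy has two main ingredients: a transitivity obstruction coming from $n$-cycles, and a counting argument based on the two-cycle types $[\ell,n-\ell]$.

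First, I observe that any subgroup of $S_n$ containing an $n$-cycle is transitive, since a single $n$-cycle already generates a transitive cyclic group. So a group $G$ having an $n$-cycle must contain a transitive component in every normal covering. Now $S_n$ contains $n$-cycles for all $n\geq 2$, and $A_n$ contains $n$-cycles precisely when $n$ is odd. Hence an all-intransitive minimal normal covering of $G=S_n$ or $A_n$ can only occur for $G=A_n$ with $n$ even.

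Next, assume $G=A_n$ with $n$ even, $n\geq 4$, and let $\delta=\{H_1,\ldots,H_k\}$ be a basic set of size $k=\gamma(A_n)$ with all $H_i$ intransitive. For each $\ell\in\{1,\ldots,n/2\}$ the parities of an $\ell$-cycle and an $(n-\ell)$-cycle multiply to $(-1)^{n-2}=1$, so the type $[\ell,n-\ell]$ lies in $A_n$. The key observation is orbit rigidity: if some $\sigma\in H_i$ has type $[\ell,n-\ell]$, then the orbits of $\langle\sigma\rangle$, namely an $\ell$-set and an $(n-\ell)$-set, refine the $H_i$-orbits; the only strict coarsening is $\{\Omega\}$, which would force $H_i$ transitive. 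Hence the $H_i$-orbits are exactly those of $\langle\sigma\rangle$, and $H_i$ lies in the corresponding conjugate of $(S_\ell\times S_{n-\ell})\cap A_n$. In particular $H_i$ can cover at most one type $[\ell,n-\ell]$ with $\ell\leq n/2$, and therefore $k\geq n/2$. Combining with the upper bound $\gamma(A_n)\leq\lfloor(n+4)/4\rfloor$ of Proposition~\ref{upper}(b) yields $n/2\leq\lfloor(n+4)/4\rfloor$, which forces $n\leq 4$ and hence $n=4$.

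Finally, for $n=4$ the two mandatory values $\ell\in\{1,2\}$ pin down the two components. The component covering the type $[3,1]$ sits inside (a conjugate of) $(S_1\times S_3)\cap A_4=A_3$ and, containing a $3$-cycle, must equal $A_3$. The component covering $[2,2]$ sits inside (a conjugate of) $(S_2\times S_2)\cap A_4=\{e,(12)(34)\}=C_2$ and, containing a double transposition, equals $C_2$. Thus $\delta=\{A_3,C_2\}$, and a direct count confirms that the four conjugates of $A_3$ exhaust the eight $3$-cycles while the three conjugates of $C_2$ exhaust the three double transpositions, so $1+8+3=12=|A_4|$ as required.

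The step I expect to be most delicate is the orbit-rigidity observation: one must be careful that \emph{every} intransitive subgroup containing a two-cycle element is forced to have the specified partition shape (hence can serve only one value of $\ell$). Once this rigidity is established, the arithmetic comparison $n/2>\lfloor(n+4)/4\rfloor$ for $n>4$ instantly eliminates all remaining cases, and the $n=4$ identification is immediate.
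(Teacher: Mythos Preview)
Your proof is correct and follows essentially the same route as the paper's: reduce to $G=A_n$ with $n$ even via the $n$-cycle obstruction, show each intransitive basic component can cover at most one two-cycle type $[\ell,n-\ell]$ (you argue this via orbit rigidity, the paper via passing to maximal intransitive overgroups $[S_i\times S_{n-i}]\cap A_n$, but these are equivalent), compare $n/2$ with the upper bound $\lfloor(n+4)/4\rfloor$ from Proposition~\ref{upper}, and then identify the $n=4$ covering directly. The only cosmetic difference is that the paper first enlarges each component to a maximal intransitive subgroup before the counting step, whereas you work with the given components; both are fine.
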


\begin{proof} Suppose that $\delta$ is a basic set generating a  minimal normal covering of $G=A_n$ or $S_n$ with all components intransitive. Since for $S_n$, and if $n$ is odd also for $A_n$, some component must contain an $n$-cycle, it follows that $G=A_n$ with $n$ even. Without loss of generality we may assume that each $H\in\delta$ is maximal by inclusion among the intransitive subgroups of $G$, that is to say each $H$ is of the form $H(i):=[S_i\times S_{n-i}]\cap A_n$ for some positive integer $i\leq n/2$. Consider types $[i,n-i]$ with $1\leq i\leq n/2$. Since $n$ is even, all permutations of these types are even and so lie in $A_n$. If $H\in \delta$ contained a permutation of type $[i,n-i]$  and also one of type $[j,n-j]$ with $1\leq i< j\leq n/2$, then we would have $H\cong H(i)\cong H(j)$, which is impossible since $i<n/2$ while $n-j\geq n/2.$  Thus $\delta$ contains distinct components for distinct types $[i,n-i]$ with $1\leq i\leq n/2$, and hence $\gamma(A_n)=|\delta|\geq n/2.$ However, by Proposition~\ref{upper}, $\gamma(A_n)\leq\left\lfloor\frac{n+4}{4}\right\rfloor$, and it follows that
$n\leq 4$. Thus $n=4$ and $G=A_4$. Since the only intransitive subgroups of $A_4$ are contained in $[S_1\times S_{3}]\cap A_4=A_3$ or in $[S_2\times S_{2}]\cap A_4=C_2,$ the only possibility is $\delta=\{A_3, C_2\}.$ It is easily checked that this set $\delta$ is a basic set.
\end{proof}

\section{
$3$-cycle decompositions and intransitive components}\label{Sym-even}
Proposition~\ref{gamma twocycles} gave a generic lower bound for $\gamma(S_n)$ when $n$ is odd, and for $\gamma(A_n)$ when $n$ is even. In this section we obtain similar general lower bounds for degrees $n$ of the opposite parity. Thus we deal with $G:=S_n$ for $n$ even with $n\geq4$, and $G:=A_n$ for $n$ odd with $n\geq 5$. Here types with three entries play an important role analogous to the role played by types with two entries in the previous section. We note that types with two entries are of no use in this new situation since no such type belongs to $A_n$ with $n$ odd, and when $n$ is even then all such types belong to $A_n$ and hence may contribute to only one basic component for $S_n$.

Recalling the definition of $a(n)$ from {\rm (\ref{a})} in Section \ref{basic number} as the minimum positive integer which does  not divide $n$, we define the following set of types:
\begin{equation}\label{T}
\mathcal{T}:=\{ [i,(a(n)-1)i,n-a(n)i]\,:\ 1\leq i<\frac{n-1}{a(n)}\ \mbox{and}\ (i,n)=1\}.
\end{equation}
Note that the minimal $n$ for which $\mathcal{T}\neq\varnothing$ is $n=5.$
In the case where $n$ is divisible by 6 and $n\geq12$, we write $n=3m$ (so $m\geq 4$ is even), and consider also the following set of types, for certain intervals $I\subseteq [1,\frac{m}{2})$:
\begin{equation}\label{Tprime}
\mathcal{T}'(I):=\{ [m-i,m-2i,m+3i]\,:\ i\in I\ \mbox{and}\ (i,n)=1\}.
\end{equation}
Since $m$ is even, $i\in I$ implies that $i\leq (m-2)/2$ and so $m-i> m-2i\geq 2$. With only two exceptions in each case, the maximal proper subgroups of $G$ containing permutations of any of these types turn out to be intransitive.

\begin{lemma} \label{three cycles} Let $n\geq 5$, let $G=S_n$ with $n$ even, or $G=A_n$ with $n$ odd, and let 
$\mathcal{T}, \mathcal{T}'(I)$ be as in 
{\rm (\ref{T}), (\ref{Tprime})} where $I\subseteq [1,\frac{m}{2})$.  Suppose that $H$ is a maximal subgroup of $G$ containing a permutation of type $T$, where either $T\in\mathcal{T}$, 
or  $n\equiv 0\pmod{6}$, $n\geq12$, and $T\in \mathcal{T}'(I)$. Then either $H$ is intransitive or one of the following holds:
\begin{description}
\item[(a)]  $n=2^e+1\geq 5$, $T=[1,1,2^e-1]\in\mathcal{T}$, $G=A_n$, and $H=P\Gamma L_2(2^e)\cap G$;
\item[(b)] $n\equiv 0\pmod{6}$, and one of:
\begin{description}
\item[(i)] $n=6$, $T=[1,3,2]\in\mathcal{T}$, $G=S_6$, and $H=S_3\wr S_2$;
 \item[(ii)] $n=18$, $m=6, i=1$, $T=[5,4,9]\in\mathcal{T}'(I)$, and $H=S_9\wr S_2$; or
\item[(iii)] $n=36$, $m=12, i=5$, $T=[7,2,27]\in\mathcal{T}'(I)$, and $H=S_9\wr S_4$.
 \end{description}
\end{description}
\end{lemma}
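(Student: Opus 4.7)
The plan is to assume $H$ is transitive and derive that $H$ lies in the listed exceptions, splitting into the imprimitive and primitive cases.

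\emph{Imprimitive case.} Suppose $H$ preserves a non-trivial partition $\mathcal{B}$ with $m\geq 2$ blocks of common size $b\geq 2$, so $n=mb$. Write $\sigma=\gamma_1\gamma_2\gamma_3$ as its three disjoint cycles, with lengths prescribed by $T$. By Lemma~\ref{imprimitive}(a), the block-supports $\mathcal{B}[\gamma_j]$ are pairwise either equal or disjoint, and by (b) the intersections of blocks with a given $\mathrm{supp}(\gamma_j)$ have a common size. I would case-split on the coincidence pattern of the three sets. For $T=[i,(a(n)-1)i,n-a(n)i]\in\mathcal{T}$, the constraints $(i,n)=1$, $(a(n)-1)\mid n$ (by the minimality of $a(n)$) and $b\mid n$ eliminate every configuration except the unique small case $n=6$, $a(n)=4$, $i=1$, which gives $H=S_3\wr S_2$ as in (b)(i). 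For $T=[m-i,m-2i,m+3i]\in\mathcal{T}'(I)$ with $n=3m$, a similar but finer case analysis---using $(i,n)=1$, that $b$ divides $n=3m$, and that $m-2i\geq 2$---leaves precisely the two sporadic triples $(n,m,i)=(18,6,1)$ and $(36,12,5)$, giving $H=S_9\wr S_2$ and $H=S_9\wr S_4$ respectively as in (b)(ii)-(iii).

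\emph{Primitive case.} For $T\in\mathcal{T}$, I would consider the power $\tau:=\sigma^{(a(n)-1)i}$. Using $(a(n)-1)\mid n$ and $(i,n)=1$, one checks $\gcd((a(n)-1)i,\,n-a(n)i)=1$, so $\tau$ has type $[1^{a(n)i},\,n-a(n)i]$: an $(n-a(n)i)$-cycle fixing the $a(n)i$ points of the two shorter cycles of $\sigma$. When $n-a(n)i<n/2$, i.e.\ $i>n/(2a(n))$, Lemma~\ref{WI 13.5} applied with $\Delta$ equal to the fixed-point set of $\tau$ forces $H\supseteq A_n$, contradicting maximality of $H$ as a proper subgroup of $G$. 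In the complementary range $i\leq n/(2a(n))$, Marggraf's theorem (Lemma~\ref{WI 13.8}) applied to the cycle of $\tau$ shows $H$ is $(a(n)i+1)$-fold transitive, and combining this with the minimum-degree bound of Lemma~\ref{min deg} and with Lemma~\ref{p-permutations}(a) rules out primitive $H$ in every situation except $a(n)=2$ and $i=1$. In that residual case, $G=A_n$ with $n$ odd, $\sigma$ is an $(n-2)$-cycle fixing a pair $\Delta=\{\alpha,\beta\}$, and Marggraf yields $3$-transitivity of $H$. I would then pass to the $2$-transitive action of $H_\alpha$ on $\Omega\setminus\{\alpha\}$, which contains $\sigma$ as an $(n-2)$-cycle fixing a single point; applying Lemma~\ref{Feit} together with the parity constraint $\sigma\in A_n$ (forcing $n-2$ odd, hence $q$ a power of $2$) identifies $H=P\Gamma L_2(2^e)\cap A_n$ with $n=2^e+1$, yielding exception (a). For $T\in\mathcal{T}'(I)$, suitable powers of $\sigma$ produce permutations of type $[2,\dots,2,4]$ (when $4\nmid n$) or $[2,\dots,2,6]$ (when $4\mid n$), so Lemma~\ref{p-permutations}(b),(c) excludes every primitive proper subgroup of $A_n$.

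The main obstacle is the imprimitive analysis: pinning down precisely the sporadic exceptions (b)(ii)-(iii) requires careful book-keeping of how the three cycles of $\sigma$ can simultaneously respect a block system of size $b\mid n$, using the uniform-intersection condition of Lemma~\ref{imprimitive}(b) together with $(i,n)=1$. A secondary delicate point is the primitive case with $a(n)=2$, $i=1$: one must correctly use the maximality of $H$ inside $A_n$ and the parity of the Frobenius automorphism to isolate precisely $P\Gamma L_2(2^e)\cap A_n$ rather than some other subgroup of $P\Gamma L_2(q)$ with $q$ a general prime power.
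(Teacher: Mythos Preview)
Your overall split into imprimitive and primitive cases is right, and your imprimitive sketch tracks the paper's argument. The primitive case, however, has genuine gaps.

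For $T=[m-i,m-2i,m+3i]\in\mathcal{T}'(I)$ your primitive argument is wrong: since $m$ is even and $(i,n)=1$ forces $i$ odd, both $m-i$ and $m+3i$ are odd, so any power of $\sigma$ restricted to those two cycles decomposes into odd-length cycles or fixed points. No power of $\sigma$ can therefore have type $[2,\dots,2,4]$ or $[2,\dots,2,6]$, and Lemma~\ref{p-permutations}(b),(c) cannot be invoked. The paper instead uses the gcd relations $(m-i,m-2i)=(m-i,m+3i)=1$ to see that $\sigma^{(m-2i)(m+3i)}$ is a single $(m-i)$-cycle with $2<m-i<n/2$, and then Lemma~\ref{WI 13.5} immediately rules out primitivity.

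For $T\in\mathcal{T}$ in the primitive case, neither tool you name does the job. Lemma~\ref{p-permutations}(a) concerns the fixed type $[p,\dots,p,2p]$ with $p$ the least prime dividing $n$; this is not of the form $[i,(a-1)i,n-ai]$ and gives no information here. The minimum-degree bound yields only $\mu(H)\leq ai$ (via $\sigma^{\,n-ai}$, which moves exactly the $ai$ points of the two short cycles) together with $\mu(H)\geq n/3$ from $2$-transitivity, hence $ai\geq n/3$, which is far from forcing $ai=2$. The paper's route is to use Marggraf's $(ai+1)$-fold transitivity together with the classification of multiply transitive groups (Theorem~4.11 of \cite{CA}) to bound $ai+1\leq 5$, and then a short check of $M_{12}$, $M_{24}$ and of the $4$-transitive degrees reduces to $a=2$, $i=1$. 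In that residual case your appeal to Lemma~\ref{Feit} via $H_\alpha$ also fails: Lemma~\ref{Feit} classifies primitive groups containing a \emph{full} cycle, whereas $\sigma$ is only an $(n-2)$-cycle in the degree-$(n-1)$ action of $H_\alpha$. The paper instead reads off the $3$-transitive groups of odd degree from the tables of $2$-transitive groups and eliminates $M_{11}$ and $M_{23}$ by checking that they lack elements of order $9$ and $21$ respectively.
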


\begin{proof} 
Let $a:=a(n)$ and suppose first that
$\sigma\in G$ is of type $T=[i,(a-1)i,n-ai]\in\mathcal{T}$. By (\ref{T}),  $n-ai\geq 2$ so, since $a\geq 2$, we have $i < n/2.$
Let $H$ be a maximal subgroup of $G$ containing $\sigma$, and suppose that $H$ is transitive. We claim that $H\not\geq A_n$. This is true by assumption if $n$ is odd. If $n$ is even then $i$ is odd and it follows that $\sigma$ has exactly one even length cycle, namely of length $(a-1)i$ if $a$ is odd, and of length $n-ai$ if $a$ is even. Hence $\sigma$ is an odd permutation, and since $H\ne S_n$ we have $H\not\geq A_n$ in this case also. This proves the claim.

Note that, by the definition of $a$ in (\ref{a}), $a-1$ divides $n$. Thus, since $(i,n)=1$ we have $(a-1,i)=1$. Also $(i,n-ai)=(i,n)=1$, and
$$
((a-1)i,n-ai)=((a-1)i,n-i)=(a-1,n-i)=(a-1,i) = 1.
$$
Thus the permutation $\sigma^{(a-1)i}$ is an $(n-ai)$-cycle and we have $n-ai\geq2$. Suppose that $H$ is primitive. Since $H\not\geq A_n,$ by Lemmas~\ref{WI 13.8}, we get that $H$ is $(ai+1)$-fold transitive. Since $a\geq 2$, we must have $3\leq ai+1\leq 5$
(see Theorem 4.11 in \cite{CA}). We consult the classification of the $2$-transitive groups as given, for example, in Tables 7.3, 7.4 of \cite{CA}.
If $H$ is 5-transitive then $H=M_{12}$ or $M_{24}$, but these cases do not arise since $a(12)=a(24)=5$. Thus $ai+1\leq4$.  If $ai+1=4$ then $a=3$ and $n$ is even and not divisible by $3$; however there are no 4-transitive groups of such a degree $n$ not containing $A_n$. Thus $ai+1=3$, so $a=2$, $i=1$, and $n$ is odd.
Since $n$ is odd and $n\geq5$, there are no affine $3$-transitive groups. For the almost simple cases the $3$-transitive possibilities of odd degree are $H=P\Gamma L_2(2^e)\cap G$ with $n=2^e+1$, $M_{11}$ and $M_{23}.$ The first family are examples as listed in (a), $M_{11}$ is excluded as it does not contain elements of order $9$, and $M_{23}$ is excluded as it does not contain elements of order $21.$

Thus we may assume that $H$ is imprimitive, so $H=[S_b\wr S_c]\cap G$ with $n=bc,2\leq b\leq n/2$. The condition $(i,n-ai)=1$ implies that the three cycles of $\sigma$ cannot be all union of blocks and also that they cannot all meet  the same set of blocks.
Suppose that the $(n-ai)$-cycle of $\sigma$ is a union of blocks of size $b,$ while the $i$-cycle and the $(a-1)i$-cycle meet the same set of $n/b-(n-ai)/b=(a/b)i$ blocks. Then $b$ divides $(n,n-ai)=(n,ai)=(n,a)<a,$ since $a$ does not divide $n,$ which gives $a/b > 1.$ It follows that the $i$-cycle meets more than $i$ blocks, a contradiction.
If the $i$-cycle of $\sigma$ is a union of blocks of size $b,$ while the $(n-ai)$-cycle and the $(a-1)i$-cycle meet the same set of $c'=\frac {n-i}{b}$ blocks, then $c'$ divides $(n-ai,(a-1)i)=1,$ which gives the contradiction $b=n-i>n/2.$
Finally if the $(a-1)i$-cycle of $\sigma$ is a union of blocks of size $b,$ while the $(n-ai)$-cycle and the $i$-cycle meet the same set of $c'=\frac {n-(a-1)i}{b}$ blocks, then $c'$ divides $(n-ai,i)=1,$ which gives
$$
b=n-(a-1)i=(n,n-(a-1)i) = (n,(a-1)i) = (n,a-1)=a-1
$$
and thus $i=\frac{n}{a-1}-1<\frac{n-1}{a}$. This inequality is equivalent to
$n< a^2-2a+1=(a-1)^2.$ Since $n\geq 5$, this implies that $a\geq4$, and hence, by the definition of $a$, that $6 \,|\, n$. If $a=4$ we have $n<(a-1)^2=9$ and hence $n=6$,
$b=3$,  $T=[1,3,2]$, and $H=S_3\wr S_2$, as in (b)(i). Thus we may assume that $a\geq 5$, and hence, by the definition of $a$, that $12 \,|\, n$.
If $n=12$ then $a=5,\ b=4$ and $i=2,$ contradicting $(i,12)=1.$ If $n=24$ then $a=5$, $b=4$ and  $i=5,$ contradicting $i<23/5.$ Thus $n\geq 36$, and Lemmas~\ref{min-prime-even} and~\ref{a-properties} apply giving $a\leq p^0(n)\leq \sqrt{n}-1.$ It follows that $n<(\sqrt{n}-2)^2=n+4-4\sqrt{n},$ which is a contradiction.

Now suppose that  $n=3m$ with $m$ even, $m\geq4$, and consider $\sigma\in G=S_n$ of type $T=[m-i,m-2i,m+3i]\in\mathcal{T}'(I)$. Let $H$ be a maximal subgroup of $G$ containing $\sigma$, and suppose that $H$ is transitive. Here $\sigma$ has exactly one even length cycle, namely of length $m-2i$ (noting that $m-2i>0$), and so $\sigma$ is an odd permutation, and in particular $H\ne A_n$.

Note also that, since $(i,n)=1$, $m$ is even and $i$ is odd, we have
\begin{equation}\label{gcd}
\begin{array}{cll}
(m-i,m-2i)&=&(m-i,i) = (m,i)=1\\
(m-i,m+3i)&=&(m-i,4i)=(m-i,i)=1\\
(m+3i,m-2i)&=&(5i,m-2i)=(5,m-2i) = 1\ \mbox{or}\ 5.\\
\end{array}
\end{equation}
Thus the permutation $\sigma^{(m-2i)(m+3i)}$ is an $(m-i)$-cycle and we have $2< m-i<n/2$. It follows from Lemma~\ref{WI 13.5} that $H$ is not primitive.
Thus $H=S_b\wr S_c$ with $n=bc, b\geq 2, c\geq2$. Suppose that the $(m-i)$-cycle of $\sigma$ is a union of blocks of size $b$. Then $b$ divides $(n,m-i)=(3m,m-i)=(3,m-i)$, and hence $b=3$ divides $m-i$. By (\ref{gcd}), $3$ does not divide $m-2i$ or $m+3i$, and so each of the remaining $\sigma$-cycles meets the same number of blocks, say $c'$, by Lemma~\ref{imprimitive}. Thus $c'$ divides $(m+3i,m-2i)$. Since $b=3 < m+3i$ we must have $c'>1$, and therefore (\ref{gcd}) implies that $c'=5$ and these $c'$ blocks comprise $15=bc'=(m+3i)+(m-2i)=2m+i$ points. Since $1\leq i<m/2$, this means that $2m<15<5m/2$, and hence $m=7$, which is a contradiction since $m$ is even. Thus the $(m-i)$-cycle of $\sigma$  is not a union of blocks. If it meets more than one block then, by Lemma~\ref{imprimitive}, it must meet the same number of blocks as the $\sigma$-cycle of length $m-2i$ or $m+3i$, and hence either $(m-i,m-2i)>1$ or $(m-i,m+3i)>1$, and neither of these holds by (\ref{gcd}). Hence the $(m-i)$-cycle of $\sigma$ is a proper subset of a block, and this block must contain exactly one of the other $\sigma$-cycles. Hence $b=(m-i)+(m-2i)=2m-3i$ or $b=(m-i)+(m+3i)=2m+2i$. Since $b\leq n/2$ it follows that $b=2m-3i$, and the $(m+3i)$-cycle is a union of blocks. Hence
\[
b=2m-3i = (2m-3i,m+3i) = (9i,m+3i) = (9,m+3i)\ \mbox{which divides 9}.
\]
Since some block is a union of two $\sigma$-cycles, if $b=3$ we would have $m-i=2, m-2i=1$ and hence $m=3$, which is a contradiction since $m$ is even. Hence $b=9$, and since $i=(2m-9)/3$ is an integer, it follows that 6 divides $m$. Moreover since $1\leq i<m/2$, we have $6\leq m<18$, whence either $m=6$, $i=1$, or $m=12, i=5$, and we have identified the two solutions in part (b).
\end{proof}

Our next result is most easily expressed using the notation introduced in Lemma~\ref{asymptoticI}, namely, for an interval $I\subseteq [0,n]$,
\[
\phi(I;n) =  \left|\left\{i\in \mathbb N^*\ :\ i\in I,\ \mbox{and}\ (i,n)=1\right\}\right|.
 \]
Recall that, if the interval $I$ has length $|I|\sim cn$, for some constant $c$, then by Lemma~\ref{asymptoticI}, $\phi(I;n) \sim c \phi(n)$.

\begin{proposition} \label{even-sym}
\begin{description}
\item[(a)] Let $G=S_n$ with $n$ even, $n\geq 4$, and not divisible by $3$, or let $G=A_n$ with $n$ odd, $n\geq 5$. Let $a=3$ if $n$ is even, and $a=2$ if $n$ is odd.
Let $I$ be the interval $(1,\frac{n-1}{a})=(1,2^{e-1})$ if $n = 2^e+1$ for some $e$, and otherwise let $I=[1,\frac{n-1}{a})$. Then
\[
\gamma(G) \geq \frac{\phi(I;n)}{2}+1\ \sim\ \frac{\phi(n)}{2a}.
\]
\item[(b)] Let $n$ be divisible by $6$, and let $I=[1,\frac{n}{9})$ if $9$ does not divide $n$, and $I=[\frac{n}{18},\frac{n}{6})$ if $9$ divides $n$. Then
\[
\gamma(S_n)\geq \phi(I;n)+1\ \sim \frac{\phi(n)}{9}.
\]
 \end{description}
\end{proposition}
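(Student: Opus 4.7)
The plan is to apply the general lower-bound strategy outlined at the end of Subsection~\ref{sect:cyctype}: exhibit a family $\mathcal{F}$ of permutation types of known size, bound from above the number of types of $\mathcal{F}$ that any maximal subgroup arising as a basic component can contain, and conclude $\gamma(G)\geq |\mathcal{F}|/c$ plus an additive term for the $n$-cycles.

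For part (a), I take $\mathcal{F}=\{T_i=[i,(a-1)i,n-ai]:i\in I,\ (i,n)=1\}$, so $|\mathcal{F}|=\phi(I;n)$. The interval $I$ is chosen to exclude the value $i=1$ whenever $n=2^e+1$, which is precisely the exceptional case of Lemma~\ref{three cycles}(a); thus that lemma forces every maximal subgroup of $G$ containing a type from $\mathcal{F}$ to be intransitive, hence conjugate to $H_k:=[S_k\times S_{n-k}]\cap G$ with $1\leq k\leq n/2$. The central combinatorial step is the claim that each such $H_k$ contains at most two types of $\mathcal{F}$: from the cycle lengths $i,(a-1)i,n-ai$, the condition $T_i\in H_k$ forces $i$ to lie in
\[
\{k,\ n-k,\ k/(a-1),\ (n-k)/(a-1),\ k/a,\ (n-k)/a\},
\]
and a case split on $(k,n)$ using Lemma~\ref{a-properties}(c), $a-1\mid n$, and the parity (if $n$ is even) or congruence modulo $a$ (in the odd-$n$ case, where $a=2$ collapses the list to four entries) imposed by $(i,n)=1$ shows that at most two are valid. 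Since no intransitive $H_k$ covers an $n$-cycle, a minimal basic set needs at least one further transitive component; writing $\gamma(G)=s+t$ with $s$ intransitive and $t\geq 1$ transitive components yields $s\geq\phi(I;n)/2$, hence $\gamma(G)\geq \phi(I;n)/2+1$. The asymptotic $\phi(I;n)/2\sim\phi(n)/(2a)$ is immediate from Lemma~\ref{asymptoticI}, since $|I|\sim n/a$.

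For part (b) I apply the same framework with $\mathcal{F}=\mathcal{T}'(I)$. Lemma~\ref{three cycles}(b) lists only finitely many transitive exceptions (namely $n\in\{18,36\}$), whose contribution is absorbed into the additive $+1$. The improvement is the stronger claim that each $H_k$ contains at most one type of $\mathcal{T}'(I)$. Writing $n=3m$ with $m$ even, the six candidates for $i$ reduce, after imposing $k\leq n/2$, to $m-k$, $(m-k)/2$, $(k-m)/3$, $(2m-k)/3$. Two arithmetic observations then collapse these to at most one element of $\mathcal{F}$. First, the two ``small-$k$'' candidates differ by a factor of two, so they cannot both be odd; since $n$ is even, at most one is coprime to $n$. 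Second, the two ``large-$k$'' candidates sum to $m/3$, hence are both integers only when $3\mid m$ (i.e.\ $9\mid n$), in which case the choice $I=[n/18,n/6)$ forces both to equal their common average $m/6$ whenever both lie in $I$, so they represent the same type. Combined with $t\geq 1$ for the $n$-cycles, this yields $\gamma(S_n)\geq \phi(I;n)+1$, and Lemma~\ref{asymptoticI} gives $\phi(I;n)\sim\phi(n)/9$ as $|I|=n/9$ in both regimes.

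The main obstacle is the combinatorial bounding, especially the tight ``at most one'' claim in (b): it requires a careful analysis of the residues of $k$ modulo $2$, $3$, and the primes dividing $n$, together with verification that the candidate $i$-expressions interact with $(i,n)=1$ exactly as outlined. One must also confirm that the finitely many transitive exceptions of Lemma~\ref{three cycles}(b) contribute only a bounded perturbation, which motivates formulating the bounds with an additive $+1$.
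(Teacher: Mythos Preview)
Your overall framework for both parts matches the paper's proof: use the type families $\mathcal{T}$ (part (a)) and $\mathcal{T}'(I)$ (part (b)) from (\ref{T})--(\ref{Tprime}), invoke Lemma~\ref{three cycles} to force intransitive components (modulo the listed exceptions), bound the number of types from the family that a single intransitive $H_k=[S_k\times S_{n-k}]\cap G$ can contain, and add one for a transitive component covering the $n$-cycles. Part~(a) is essentially the paper's argument, sketched correctly.

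There is, however, a genuine gap in your argument for part~(b). Your two ``arithmetic observations'' show that among the four candidates $m-k$, $(m-k)/2$, $(k-m)/3$, $(2m-k)/3$, at most one of the first pair and at most one of the second pair can be coprime to $n$ and lie in $I$. That yields at most \emph{two} valid candidates, not one; you have not excluded the possibility that one candidate from each pair is simultaneously valid. In the case $9\nmid n$ (so $I=[1,m/3)$) this cross-case is in fact vacuous, because membership in $I$ forces $k<m$ for the first pair and $k>m$ for the second---but you do not say so. In the case $9\mid n$ (so $I=[m/6,m/2)$) the ranges genuinely overlap: for instance $(2m-k)/3\in I$ allows any $k\in(m/2,3m/2]$, which meets the ranges where $m-k\in I$ and where $(m-k)/2\in I$. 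A further argument is therefore required. The paper closes this gap by first separating on the parity of $k$ (the candidate $i=(m-k)/2$ arises only for $k$ even, the others only for $k$ odd), and then, for $k$ odd in the $9\mid n$ case, separating $i=m-k$ from $i=(2m-k)/3$ by whether $3\mid k$, using $3\mid m$ and $3\nmid i$. Without this step your argument would only give $\gamma(S_n)\geq \phi(I;n)/2+1$ when $9\mid n$.

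A minor point: for $n\in\{18,36\}$ the exceptional transitive components of Lemma~\ref{three cycles}(b)(ii)--(iii) are not ``absorbed into the $+1$''; rather, for these $n$ one checks directly that $\phi(I;n)=1$, so the asserted bound reduces to $\gamma(S_n)\geq 2$, which holds automatically.
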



\begin{proof}
(a) For the values of $n$ in this case, we note that the parameter $a$ is equal to
$a(n)$, as defined in {\rm (\ref{a})}. If $n\leq 6$ the quantity $\phi(I;n)$ is zero, and $\phi(I;7)=1$ so the asserted lower bound for $\gamma(G)$ holds in these cases. Thus we may assume that $n\geq8$.

Let $\delta$ be a basic set, with maximal components, generating a minimal normal covering of $G$. Let $i\in I$ such that $(i,n)=1$, (so $i>1$ if $n=2^e+1$ for some $e$). Then by Lemma~\ref{three cycles}, $\delta$ contains an intransitive subgroup, say $H(i)$, containing a permutation of type $T_i:=[i,(a-1)i,n-ai]$.

Let $U\in\delta$ be intransitive. We claim that at most two of the types $T_i$, with $i\in I$ and $(i,n)=1$, belong to $U=[S_c\times S_{n-c}]\cap G$ for some $c$ such that $1\leq c<n/2$. If $T_i$ belongs to $U$ then $\{c,n-c\}=\{i,n-i\},$ or $\{ai,n-ai\}$, or $n$ is even, $a=3$ and $\{c,n-c\}=\{2i,n-2i\}$.
\begin{description}
\item[(i)] If  $\{c,n-c\}=\{i,n-i\},$ then $c=i\in I$, since $n-i>n/2$, and hence $(c,n)=1$.
\item[(ii)] If $a=3$ and $\{c,n-c\}=\{2i,n-2i\},$ then either $c=2i$ or $c=n-2i$, and for both possibilities $(c,n)=2$.
\item[(iii)]  If  $\{c,n-c\}=\{ai,n-ai\},$ then either $c=ai$ or $c=n-ai$, and for both possibilities $(c,n)=1$; also we have $a\,\mid c$ if $c=ai$ while $(c,a)=1$ if $c=n-ai$.
 \end{description}
Suppose first that $n$ is odd and hence $a=2.$ If $c$ is even, then either $c=i\in I$ in case (i) or $c=2i$ in case (iii), and so at most two of these types belongs to $U$ (with two occurring only if $c$ and $c/2$ both lie in $I$). If $c$ is odd, then $c=i$ or $c=n-2i$ are the only two possibilities (with both occurring only if $c$ and $(n-c)/2$ both lie in $I$). Now suppose that $n$ even with $a=3$. If $(c,n)>1$ then there are at most two possibilities, namely the ones in case (ii). If $(c,n)=1$ then again there are at most two possibilities: $c=i$ and either $c=3i$ (if 3 divides $c$) or $c=n-3i$ (if 3 does not divide $c$). Thus the claim is proved.

Hence, in order to cover all the types $T_i$ with $i\in I$ and $(i,n)=1$, we need at least $\phi(I;n)/2$ intransitive basic components in $\delta$. Since we also need a transitive component to contain an $n$-cycle, we have $|\delta|\geq 1+ \phi(I;n)/2$. That this lower bound satisfies the asymptotic property $1+\phi(I;n)/2\ \sim\ \phi(n)/2a$ follows from Lemma~\ref{asymptoticI}.

(b) Suppose now that $n=3m$ with $m$ even and $m>2$, and that $I=[1,m/3)$ if $m$ is not divisible by $3$, and $I=[m/6\,,\,m/2)$ if 3 divides $m$. Let $\delta$ be a basic set, with maximal components, generating a minimal normal covering of $S_n.$ Let $i\in I$ such that $(i,n)=1$. Then by Lemma~\ref{three cycles}, $\delta$ has a subgroup, say $H(i)$, containing a permutation of type $T_i:=[m-i,m-2i,m+3i]$, and either $H(i)$ is intransitive or $m, i, T_i,H(i)$ are as in parts (b)(ii) or (iii) of Lemma~\ref{three cycles}. We deal with the exceptional cases first. If $m=6$ then $I=[1,3)$ and $\phi(I;18)=1$; and if $m=12$ then $I=[2,6)$ and again $\phi(I;36)=1$. The inequality $\gamma(S_n)\geq 2$ is clearly true in both cases. Thus we may assume that $n\ne 18, 36$ and hence that $H(i)$ is intransitive.

Let $U$ be an intransitive subgroup in $\delta$, so $U=S_c\times S_{n-c}$ with $1\leq c<n/2=3m/2$. We claim that at most one of the types $T_i$ in the previous paragraph belongs to $U$. If $T_i$ belongs to $U$ then  $\{c,n-c\}=\{m-i,2m+i\},$ or $\{m-2i,2m+2i\}$, or $\{m+3i,2m-3i\}$. We consider these possibilities first for $i$ in $[1,m/2)$ (the union of the two possible intervals $I$) and $(i,n)=1$. Note that $m$ is even, $i$ is odd and $(3,i)=1.$
\begin{description}
\item[(i)] If  $\{c,n-c\}=\{m-i,2m+i\},$ then $c=m-i$ since $2m+i>n/2$, and hence $c$ is odd and  $m/2 <c<m$.
\item[(ii)] If $\{c,n-c\}=\{m-2i,2m+2i\},$ then $c=m-2i$ since $2m+2i>n/2$, and hence $c$ is even.
\item[(iii)]  If  $\{c,n-c\}=\{m+3i,2m-3i\},$ then either $c=m+3i$ or $c=2m-3i$, and for both possibilities $c$ is odd.
 \end{description}
If $c$ is even then at most one type $T_i$ is covered, namely $i=(m-c)/2$. So assume that $c$ is odd which implies that case (ii) does not arise. Consider first the case where 3 divides $m$. Then $I=[m/6\,,\,m/2)$, and we have $m+3i\geq 3m/2>c$ so in case (iii) only the second $c$-value may arise. Thus there is at most one type, as we can only have $c=2m-3i$ if 3 divides $c$, and  we can only have $c=m-i$ if 3 does not divide $c$. Finally assume that $(m,3)=1$ so that $I=[1,m/3)$. Then $c<m$ in case (i) and $c>m$ for either of the possible $c$-values in case (iii). Thus there is at most one type if $c<m$.  If $c>m$ then again we have at most one possibility: $c=m+3i$ if $c\equiv m\pmod{3}$, and $c=2m-3i$ if $c\equiv 2m\pmod{3}$ (and since $3\nmid m,$ the relations $c\equiv m\pmod{3}$ and $c\equiv 2m\pmod{3}$ cannot be both true). This completes the proof of the claim.

Hence, in order to cover all the types $T_i$ with $i\in I$ and $(i,n)=1$, we need at least $\phi(I;n)$ intransitive basic components in $\delta$. Since we also need a transitive component to contain an $n$-cycle, we have $|\delta|\geq \phi(I;n)+1$, and
by Lemma~\ref{asymptoticI}, this lower bound satisfies $\phi(I;n)+1\ \sim\ \phi(n)/9$.
\end{proof}

We immediately obtain the following corollary to Proposition~\ref{even-sym}.

\begin{corollary}\label{cor-lower}
If $n$ is even, not divisible by $3$, and $n\geq 4$, then setting $I=[1,\frac{n-1}{3})$,
\[
\gamma(S_n) \geq \frac{\phi(I;n)}{2}+1\ \sim\ \frac{\phi(n)}{6}.
\]
If $n$ is odd and $n\geq5$, then setting
$I'=(1,2^{e-1})$ if $n = 2^e+1$ for some $e$, and otherwise setting $I'=[1,\frac{n-1}{2})$, we have
\[
\gamma(A_n) \geq \frac{\phi(I';n)}{2}+1 = \left\{\begin{array}{ll}  \frac{\phi(n)}{4}+\frac{1}{2}
                                               &\mbox{if $n\ne 2^e+1$ for any $e$}\\
\frac{\phi(n)}{4}
                                               &\mbox{if $n= 2^e+1$ for some $e$.}\\
                                              \end{array}\right.
\]
\end{corollary}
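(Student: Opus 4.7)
The plan is to derive this corollary as a direct specialization of Proposition~\ref{even-sym}(a), followed by an elementary count of the integers coprime to $n$ in the relevant intervals.

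For the first assertion, note that when $n$ is even and not divisible by $3$, the parameter $a=a(n)$ defined in (\ref{a}) equals $3$. Thus Proposition~\ref{even-sym}(a), applied with this value of $a$, gives directly $\gamma(S_n)\geq \phi(I;n)/2+1$ for $I=[1,(n-1)/3)$. The asymptotic $\phi(I;n)/2+1\sim \phi(n)/6$ then follows from Lemma~\ref{asymptoticI}, since this interval has length $(n-1)/3\sim n/3$ and hence $\phi(I;n)\sim \phi(n)/3$.

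For the second assertion, $n$ is odd, so $a(n)=2$, and Proposition~\ref{even-sym}(a) yields $\gamma(A_n)\geq \phi(I';n)/2+1$ with $I'$ as described in the statement. The remaining work is to evaluate $\phi(I';n)$ explicitly. I would use the involution $i\mapsto n-i$ on the integers in $[1,n-1]$ coprime to $n$, which is fixed-point free since $n$ is odd and which pairs elements straddling $n/2$. This yields $|\{i : 1\leq i\leq (n-1)/2,\ (i,n)=1\}|=\phi(n)/2$. A small observation shows $(n-1)/2$ is itself coprime to $n$: any common divisor of $n$ and $(n-1)/2$ also divides $n-1$, hence divides $\gcd(n,n-1)=1$. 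Consequently $\phi([1,(n-1)/2);n)=\phi(n)/2-1$, producing the generic value $\phi(I';n)/2+1=\phi(n)/4+1/2$.

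In the exceptional case $n=2^e+1$, the interval $I'=(1,2^{e-1})$ is also open at its left endpoint, so one must further discard $i=1$ (which is coprime to $n$). This gives $\phi(I';n)=\phi(n)/2-2$ and therefore $\phi(I';n)/2+1=\phi(n)/4$. There is no substantive obstacle: the entire corollary reduces to Proposition~\ref{even-sym}(a) together with elementary manipulation of $\phi$. The only point requiring a brief check is that the right endpoint $(n-1)/2$ of $I'$ is always coprime to $n$, which explains the $+1/2$ correction in the generic case and, together with the additional exclusion of $i=1$, yields the cleaner formula in the exceptional case.
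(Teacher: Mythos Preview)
Your proof is correct and follows essentially the same approach as the paper's: both derive the inequality directly from Proposition~\ref{even-sym}(a) and then compute $\phi(I';n)$ by exploiting the involution $i\mapsto n-i$ on residues coprime to $n$, noting that the midpoint(s) $(n\pm1)/2$ are themselves coprime to $n$ (the paper phrases the count as $\phi(n)=2\phi(I';n)+2$ or $+4$, but this is the same calculation). One trivial slip: the length of $[1,(n-1)/3)$ is $(n-4)/3$, not $(n-1)/3$, though this does not affect the asymptotic.
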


\begin{proof}
Everything follows immediately from  Proposition~\ref{even-sym} except for the value of
$\phi(I';n)$. Note that $i\in [1,\frac{n-1}{2})$ with $(i,n)=1$ if and only if $n-i\in(\frac{n+1}{2},n-1]$ with $(n-i,n)=1$. Since $(n\pm 1)/2$ are both coprime to $n$, it follows that, if $n\ne 2^e+1$ for any $e$, then $\phi(n)=2\phi(I';n)+2$. If $n=2^e+1$, then $I'=(1,2^{e-1})$, and since also 1 and $n-1$ are coprime to $n$ we have
$\phi(n)=2\phi(I';n)+4$ in this case.
\end{proof}

While the lower bound in Corollary~\ref{cor-lower} is useful asymptotically, the result points to the possibly exceptional nature of integers of the form $n=2^e+1$.
For such integers the subgroups $P\Gamma L_2(2^e)$ may play a significant role for  normal coverings of $A_n$, especially for small $n$. By inspecting the ATLAS \cite{AT} and using $\gamma(A_9)\neq 2$ from \cite {BU}, we have the following.

\begin{example}\label{ex:a9}
$A_{9}$ admits a minimal normal covering
with basic set
$$
\{[S_{4}\times S_{5}]\cap A_9, H, K\}\quad\mbox{where}\quad H\cong K\cong P\Gamma L_2(8).
$$
In particular $\gamma(A_{9})=3.$
\end{example}
Note that we need both the two non-conjugate copies of $P\Gamma L_2(8)$ in $A_9$ because each of them meets just one of the two $A_9$-conjugacy classes of $9$-cycles (see \cite[p.37]{AT}).

\section{Minimal normal coverings if at most two primes divide $n$}

We are now in position for a first explicit computation of the gamma function. Our first result deals with $n$ prime.

\begin{proposition}\label{sym prime}
For any prime $p\geq 5$ the group $S_p$ admits a unique minimal normal covering generated by the basic set
$$
\delta=\{AGL_1(p)\cong C_p\rtimes C_{p-1}, \
S_k\times S_{p-k}\  :\ 2\leq k\leq \frac{p-1}{2}\}.
$$
In particular $\gamma(S_p)=\frac{p-1}{2}.$
\end{proposition}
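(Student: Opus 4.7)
The plan divides into three steps: establishing the lower bound, verifying that $\delta$ is a basic set of the claimed size, and proving uniqueness. The lower bound is immediate, since by Proposition~\ref{gamma twocycles}(a), $p\geq 5$ an odd prime gives $\gamma(S_p)\geq \phi(p)/2=(p-1)/2$.

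Next I would check that $\delta$ is a basic set. We have $\mathcal{U}_p=\{2,\ldots,(p-1)/2\}$, so $|\delta|=1+(p-3)/2=(p-1)/2$, and the components are pairwise non-conjugate (one transitive, the others intransitive with distinct orbit sizes). To see that every cycle type in $S_p$ is represented, I would argue that the types $[p]$ and $[1,p-1]$ lie in $AGL_1(p)$: it contains $p$-cycles by construction, and being sharply $2$-transitive its point stabiliser $C_{p-1}$ acts regularly, hence as a $(p-1)$-cycle, on the remaining points. For any other type $L=[\ell_1,\ldots,\ell_r]$, I claim some proper sub-sum of the $\ell_i$'s lies in $\{2,\ldots,(p-1)/2\}$, placing the permutation in the corresponding $S_k\times S_{p-k}$. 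If some $\ell_i$ itself lies in $\{2,\ldots,(p-1)/2\}$ take $k=\ell_i$; otherwise every $\ell_i$ is $1$ or exceeds $(p-1)/2$, and since two parts exceeding $(p-1)/2$ would sum to more than $p$, the type is $[1^f,\ell]$ with $\ell\geq(p+1)/2$, which for $\ell\in\{p,p-1\}$ is already covered and for the remaining range leaves $f\geq 2$ fixed points, so $k=2$ works.

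For uniqueness, let $\delta'$ be any minimal basic set, taken to consist of maximal subgroups. By Lemma~\ref{two cycles}(a), each of the $(p-3)/2$ subgroups $S_k\times S_{p-k}$ with $k\in\mathcal{U}_p$ must appear in $\delta'$ up to conjugacy, so $\delta'$ contains exactly one further component $M$, which in turn must cover the two types $[p]$ and $[1,p-1]$ not carried by the intransitive components. Thus $M$ is transitive (containing a $p$-cycle) and $M\ne A_p$ (since $p-1$ is even, a $(p-1)$-cycle is an odd permutation). Lemma~\ref{Feit}(b) then leaves only two possibilities besides $AGL_1(p)$: either $PGL_d(q)\leq M\leq P\Gamma L_d(q)$ with $p=(q^d-1)/(q-1)$, or $M\in\{PSL_2(11),M_{11},M_{23}\}$ with $p\in\{11,23\}$. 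Lemma~\ref{nminusone} applied to the $(p-1)$-cycle in $M$ forces the projective case to have $d=2$ with $q$ prime and $p=q+1$ also prime (hence $q=2$, $p=3$, excluded), or $(q,M)=(4,P\Gamma L_2(4))\cong S_5$, which is not proper. Finally, an element-order inspection rules out the three sporadic candidates: $PSL_2(11)$ and $M_{11}$ contain no element of order $10$, and $M_{23}$ contains none of order $22$. Hence $M=AGL_1(p)$, yielding uniqueness.

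The main obstacle is the uniqueness step, specifically isolating $AGL_1(p)$ as the only transitive maximal proper subgroup of $S_p$ simultaneously containing a $p$-cycle and a $(p-1)$-cycle; this is handled cleanly by combining Lemmas~\ref{Feit} and~\ref{nminusone} with elementary element-order checks for the three sporadic cases.
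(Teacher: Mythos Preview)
Your proof is correct and follows essentially the same three-step approach as the paper: the lower bound via Proposition~\ref{gamma twocycles}, verification that $\delta$ covers all cycle types, and uniqueness via Lemma~\ref{two cycles} combined with Lemmas~\ref{Feit} and~\ref{nminusone}. The one notable difference is that you explicitly dispose of the sporadic possibilities $PSL_2(11)$, $M_{11}$, $M_{23}$ from Lemma~\ref{Feit}(b)(iii) by an element-order check, whereas the paper's proof passes over case~(iii) in silence; in this respect your argument is more complete.
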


\begin{proof} By Proposition \ref{gamma twocycles},
$\gamma(S_p)\geq \frac{\phi(p)}{2}=\frac{p-1}{2}.$ We claim that the set $\delta$
in the statement generates a normal covering of $S_p$. Note that, since $p\geq 5$, the subgroup $AGL_1(p)$ is a proper subgroup. Since $|\delta|=(p-1)/2$, it will follow from this claim that $\gamma(S_p)= \frac{\phi(p)}{2}=\frac{p-1}{2}.$
First we observe that $AGL_1(p)$ is a maximal subgroup of $S_p$ and contains both
   a $p$-cycle and a $(p-1)$-cycle.
Consider now a type $L=[\ell_1,\dots,\ell_r]$ with $r\geq 2,$ $1\leq \ell_1\leq \dots\leq \ell_r$ and
$\sum_1^r \ell_i=p.$ Suppose first that $r=2$. Since $L=[1,p-1]\in AGL_1(p)$, we may assume that $\ell_1\geq2$, and thus $2\leq \ell_1<p/2$. This means that $L\in S_{\ell_1}\times S_{p-l_1}$ which lies in $\delta.$ Now suppose that $r\geq 3.$ If $\ell_1\neq 1,$ then $\ell_1\leq p/3<p/2$ and hence $L\in S_{\ell_1}\times S_{p-\ell_1}$ which lies in $\delta$. Suppose then that
$\ell_1=1$, so that $\ell_2\leq(p-1)/(r-1)\leq(p-1)/2$. Thus if $\ell_2\geq 2,$ then $L\in S_{\ell_2}\times S_{p-\ell_2}$  which lies in $\delta$. On the other hand if $\ell_2=1,$ then $L=[1,1,\ell_3,\dots,\ell_r]\in S_2\times S_{p-2}$, which lies in $\delta$.

Now we prove the uniqueness of the normal covering. Let $\mu$ be any minimal basic set for $S_p.$ Then $|\mu|= (p-1)/2$ and, by Lemma \ref{two cycles},
$$
\mu=\{H,\ S_k\times S_{p-k}\ :\ 2\leq k\leq (p-1)/2\}
$$
for some proper subgroup $H$ which must contain the types $[p],\ [1,p-1].$
In particular $H$ contains an odd permutation so $H\neq A_n$.
It follows that $H$ is $2$-transitive and contains a regular cycle which, by Lemma \ref {Feit}, gives the two choices $H=AGL_1(p)$ or $PGL_d(q)\leq H\leq P\Gamma L_d(q)$ with $p=\frac{q^d-1}{q-1}.$ In the second case, by Lemma \ref{nminusone}, we have $d=2,$ so $p=q+1$, and either $q$ is prime or $(q,H)=(4,P\Gamma L_2(4))$. The former case is impossible as $q=p-1\geq4$ and is even so cannot be prime, and the latter case is impossible since it would imply $H=G$. It follows that $H=AGL_1(p)$ and  $\mu=\delta.$
\end{proof}

\begin{remark}{\rm
We do not know the exact value of $\gamma(A_p)$ for $p$ prime. Proposition~\ref{upper} gives an upper bound  $\gamma(A_{p})\leq \left\lfloor\frac{p+3}{3}\right\rfloor$, while
Corollary~\ref{cor-lower} gives the lower bound  $\gamma(A_p)\geq\lceil\frac{p+1}{4}\rceil$ when $p$ is not a Fermat prime and  $\gamma(A_p)\geq\lceil\frac{p-1}{4}\rceil=2^{e-2}$ when $p=2^e+1$ is a Fermat prime.
An analysis of $A_{11}$ shows that the upper bound is sometimes sharp, but in other cases the upper bound is not attained, the smallest example being  $\gamma(A_7)=2=\lceil\frac{8}{4}\rceil<\left\lfloor\frac{10}{3}\right\rfloor$ (see Table~\ref{tbl:exact}). In fact for $A_7$ the lower bound is attained.
}
\end{remark}

\begin{example}\label{ex:a11} The group
$A_{11}$ admits a minimal normal covering with basic set
\[
 \delta=\{ A_{10},\ [S_{2}\times S_{9}]\cap A_{11},\ [S_{3}\times S_{8}]\cap A_{11},\ M_{11}\}.
\]
 In particular $\gamma(A_{11})=4$.
\end{example}

\begin{proof}
Let $\delta$ be a basic set for a minimal normal covering of $A_{11}.$ Without loss of generality we may assume that all subgroups in $\delta$ are maximal in $A_{11}$. Since the two conjugacy classes of subgroups $M_{11}$ are the only transitive maximal subgroups, and since some basic component $M$ contains an $11$-cycle, we may assume that $M=M_{11}\in\delta$ (for one of the $M_{11}$-classes). Also, by Lemma \ref{three cycles} each of the following types belongs to an intransitive basic component:
$
[1,1,9],\ [2,2,7],\ [3,3,5],\ [4,4,3].
$
By Corollary~\ref{cor-lower} and Proposition \ref{upper}, $|\delta|=3$ or $4$ and we must prove it is 4.

Suppose to the contrary that $|\delta|=3$.
If $[S_{2}\times S_{9}]\cap A_{11}\notin\delta,$ then the only intransitive maximal subgroups containing types $[1,1,9]$ and $[2,2,7]$ are $A_{10}$ and $[S_{4}\times S_{7}]\cap A_{11}$ respectively, so we may assume that both lie in $\delta$ and the remaining component is $M$. However then no component contains the type $[3,3,5].$ Thus $M, [S_{2}\times S_{9}]\cap A_{11}\in\delta$. Similarly if $[S_{3}\times S_{8}]\cap A_{11}\notin\delta,$ then the only intransitive basic component containing $[3,3,5]$ is $[S_{5}\times S_{6}]\cap A_{11}$ so this subgroup lies in $\delta$; but then there is no component containing $[4,4,3].$ Thus
$$
\delta=\{[S_{2}\times S_{9}]\cap A_{11},\ [S_{3}\times S_{8}]\cap A_{11},\ M_{11}\}.
$$
Since $M_{11}$ does not contain elements of order $12,$ no component in $\delta$ contains a permutation of type $[1,4,6]$, a contradiction.

Thus $\gamma (A_{11})=4$ and the normal covering described in Proposition \ref{upper} with basic set
$\delta=\{A_{10},\ [S_{2}\times S_{9}]\cap A_{11},\ [S_{3}\times S_{8}]\cap A_{11},\ M_{11}\}$ is a minimal normal covering for $A_{11}.$\medskip
\end{proof}

The case of prime degree allows us to understand that the difference between $\gamma(S_n)$ and $\gamma(A_n)$ is unbounded.
\begin{corollary}\label{distance} $$\displaystyle{\limsup_{n\rightarrow +\infty}\, [\gamma(S_n)-\gamma(A_n)]=+\infty.}$$

\end{corollary}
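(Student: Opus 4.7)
The plan is to exploit the exact value of $\gamma(S_p)$ for prime $p$ obtained in Proposition~\ref{sym prime}, and compare it with the upper bound for $\gamma(A_p)$ given by Proposition~\ref{upper}. Since the set of primes is infinite, it suffices to exhibit a sequence of primes $p_k\to+\infty$ for which the difference $\gamma(S_{p_k})-\gamma(A_{p_k})$ is unbounded; this will automatically force the $\limsup$ over all natural $n$ to equal $+\infty$.

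First, I would restrict to primes $p\geq 5$. By Proposition~\ref{sym prime} we have the exact value
\[
\gamma(S_p)=\frac{p-1}{2},
\]
and by Proposition~\ref{upper}(b) applied to $G=A_p$ (noting that $p$ is odd), we have the upper bound
\[
\gamma(A_p)\leq\left\lfloor\frac{p+3}{3}\right\rfloor\leq\frac{p+3}{3}.
\]
Subtracting, I would obtain
\[
\gamma(S_p)-\gamma(A_p)\,\geq\,\frac{p-1}{2}-\frac{p+3}{3}\,=\,\frac{3(p-1)-2(p+3)}{6}\,=\,\frac{p-9}{6}.
\]
Since $\frac{p-9}{6}\to+\infty$ as $p\to+\infty$ through primes, and since there are infinitely many primes, the conclusion
\[
\limsup_{n\to+\infty}\,[\gamma(S_n)-\gamma(A_n)]=+\infty
\]
follows immediately.

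There is no serious obstacle here: both ingredients are already in hand, the only care needed is to invoke Proposition~\ref{sym prime} (which requires $p\geq 5$) and Proposition~\ref{upper}(b) in the odd-degree form. One could equally well use the sharper lower bound $\gamma(A_p)\geq\lceil(p+1)/4\rceil$ (or $\geq 2^{e-2}$ when $p=2^e+1$) from Corollary~\ref{cor-lower} to get a two-sided estimate $\gamma(S_p)-\gamma(A_p)\,\sim\,p/6$ along non-Fermat primes, but for the $\limsup$ statement the one-sided bound above is enough.
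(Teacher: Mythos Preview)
Your proof is correct and is essentially identical to the paper's own argument: both restrict to primes $p\geq 5$, use Proposition~\ref{sym prime} for $\gamma(S_p)=(p-1)/2$ and Proposition~\ref{upper}(b) for $\gamma(A_p)\leq (p+3)/3$, and deduce $\gamma(S_p)-\gamma(A_p)\geq (p-9)/6\to+\infty$.
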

\begin{proof} Let $p$ be a prime, with $p\geq 5$. Then, by Proposition \ref{sym prime} we have $\gamma(S_p)=(p-1)/2$ while by Proposition \ref{upper}, $\gamma(A_p)\leq (p+3)/3.$ It follows that $\gamma(S_p)-\gamma(A_p)\geq (p-1)/2-(p+3)/3= (p-9)/6$ which tends to $+\infty$ as $p$ tends to $+\infty.$
\end{proof}

Next we consider prime powers.

\begin{proposition}\label{prime power} Let $n=p^{\,\alpha},$ with $p$
prime and $\alpha\geq 2$, and let
\begin{eqnarray*}
\delta&=&\{S_p\wr S_{p^{\,\alpha -1}},\ S_k\times S_{p^{\,\alpha}-k}\
 :\ 1\leq k<\frac {p^{\,\alpha}}{2},\  p\nmid k\}\quad\mbox{and} \\
\delta_{A_n}&=&\{[S_p\wr S_{p^{\,\alpha -1}}]\cap A_n,\ [S_k\times S_{p^{\,\alpha}-k}]\cap A_n\
 :\ 1\leq k<\frac {p^{\,\alpha}}{2},\  p\nmid k\}.
\end{eqnarray*}
\begin{description}
\item[(a)] The sets $\delta, \delta_{A_n}$ are basic sets for
 $S_{n}, A_n$ respectively; $\delta$ is minimal if $p$ is odd, while $\delta_{A_n}$ is minimal if $p=2$ and $n\ne8$.
\item[(b)] $\gamma(S_{\,p^{\alpha}})=\frac{\phi(p^{\alpha})}{2}+1$ if $p$ is odd, and
$\frac{2^{\alpha-2}+2}{3}\leq \gamma(S_{\,2^{\alpha}})\leq 2^{\alpha-2}+1$.
\item[(c)]  $\gamma(A_{\,p^{\alpha}})\leq \frac{\phi(p^{\alpha})}{2}+1$ with equality if
$p=2$ and $n\ne8$; and $\gamma(A_8)=2$.
\end{description}
\end{proposition}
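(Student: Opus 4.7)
My plan is to derive Proposition~\ref{prime power} almost entirely by assembling results already established, since for $n=p^\alpha$ with $\alpha\geq 2$ the prime $p$ is the unique (hence smallest) prime divisor of $n$, and the set $\delta$ in the statement coincides with the set $\delta(A)$ of Proposition~\ref{upper}(a) taken with $A=S_p\wr S_{p^{\alpha-1}}$. I would begin with a short arithmetic check, pairing $k\leftrightarrow p^\alpha-k$ and noting that $p^\alpha/2$ is never coprime to $p$ (either it is not an integer, when $p$ is odd, or it is divisible by $p$, when $p=2$), to obtain $|\delta|=1+\phi(p^\alpha)/2=|\delta_{A_n}|$.

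For part (a), I would invoke Proposition~\ref{upper}(a) directly to conclude that $\delta$ is a basic set for $S_n$. Since every component of $\delta$ is a proper maximal subgroup of $S_n$ different from $A_n$, Lemma~\ref{SA} then implies that the intersections generate a normal covering of $A_n$; pairwise non-conjugacy of the intersections is clear because $[S_p\wr S_{p^{\alpha-1}}]\cap A_n$ is transitive while each $[S_k\times S_{n-k}]\cap A_n$ is intransitive with a distinguishing orbit length, so $\delta_{A_n}$ is basic. The two minimality claims then reduce to matching lower bounds from Proposition~\ref{gamma twocycles}: its part (a) gives $\gamma(S_n)\geq 1+\phi(n)/2=|\delta|$ when $p$ is odd (so $n$ is odd and composite), and its part (b) gives $\gamma(A_n)\geq 1+\phi(n)/2=|\delta_{A_n}|$ when $p=2$ and $n\neq 8$.

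Parts (b) and (c) will then follow from (a) together with one additional lower bound. For part (b), the case $p$ odd is the equality proved in (a); when $p=2$, the upper bound $\gamma(S_{2^\alpha})\leq 2^{\alpha-2}+1$ is $|\delta|$, and for the lower bound I would apply Corollary~\ref{cor-lower} with $I=[1,(2^\alpha-1)/3)$ and count the odd integers in $I$, splitting on the parity of $\alpha$ to resolve whether $(2^\alpha-1)/3$ is an integer, in order to obtain $\phi(I;2^\alpha)\geq (2^{\alpha-1}-2)/3$ and hence $\gamma(S_{2^\alpha})\geq (2^{\alpha-2}+2)/3$. For part (c), the upper bound is just $|\delta_{A_n}|$, equality when $p=2$ and $n\neq 8$ is part of (a), and the value $\gamma(A_8)=2$ is quoted from \cite{BU}. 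The only mildly technical point is the explicit count of $\phi(I;2^\alpha)$, but since all the serious structural work---ruling out transitive components that might replace intransitive ones in a basic set---has already been carried out in Sections~\ref{intrans} and~\ref{Sym-even}, I do not foresee any real obstacle.
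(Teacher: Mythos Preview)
Your proposal is correct and follows essentially the same route as the paper's proof. The only cosmetic difference is that you invoke Proposition~\ref{upper}(a) directly to conclude that $\delta$ is a basic set for $S_n$ (which is entirely legitimate, since for $n=p^\alpha$ the prime $p$ is the least prime divisor and your $\delta$ is exactly $\delta(A)$ with $A=S_p\wr S_{p^{\alpha-1}}$), whereas the paper redundantly re-verifies the type-covering argument from scratch; and you cite Corollary~\ref{cor-lower} rather than Proposition~\ref{even-sym}(a) for the lower bound on $\gamma(S_{2^\alpha})$, which is equivalent here since $2^\alpha$ is not divisible by~$3$.
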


\begin{proof}  First we consider the set $\delta$. The group $H=S_p\wr S_{p^{\,\alpha -1}}$ contains a $p^{\,\alpha}$-cycle as well as permutations of all types $[\ell_1,\dots,\ell_r]$ for which each $\ell_i$ is divisible by $p.$ Next consider a type $[k,n-k]$ with $k\leq n/2$ and $p\nmid k$. Note that we cannot have
$k=n-k=n/2$ since this would imply $n, k, p$ all even and hence $p=2$ would divide $k$. Hence $[k,n-k]$  belongs to $S_k\times S_{p^{\,\alpha}-k}\in \delta.$ Now consider a  type $L=[\ell_1,\dots,\ell_r]$ with $r\geq 3,\ \sum_{i=1}^r \ell_i=p^{\,\alpha}$, each $\ell_i\geq 1$, and say $p\nmid \ell_1.$  Since $\ell_1=p^{\,\alpha}-\sum_{i=2}^r \ell_i$ there is a second length, say $\ell_2$, coprime to $p.$ Since $\ell_1 +\ell_2<n,$ at least one of $\ell_1$ and $\ell_2$, say $\ell_1$, is less than $n/2$.
Therefore $L\in S_{\ell_1}\times S_{p^{\,\alpha}-\ell_1}$ in
   $\delta.$  Thus $\delta$ generates a normal covering for $S_{\,p^\alpha}$ and so
$\gamma(S_{\,p^{\alpha}})\leq\frac{\phi(p^{\alpha})}{2}+1$. If $p$ is odd then equality holds by Proposition~\ref{gamma twocycles}(a). If $p=2$ and $\alpha \geq 3,$ then by Proposition~\ref{even-sym}(a), $\gamma(S_{2^\alpha})\geq 1+\phi(I;2^\alpha)/2$, where $I=[1,\frac{2^\alpha-1}{3})$. Now $\phi(I;2^\alpha)$ is the number of odd positive integers less than $\frac{2^\alpha-1}{3}$, and this number is at least $\frac{2^\alpha-4}{6}$. It follows that $\gamma(S_{2^\alpha})\geq \frac{2^{\alpha-2}+2}{3},$ which is trivially true also when $\alpha=2.$
The assertions in (b) are now proved.

Since $\delta$  does not involve $A_n$, by intersection, we get a normal covering for $A_n$, so $\gamma(A_{n})\leq 1+\phi(n)/2$. If $n=2^\alpha\geq 16$ then  $\delta_{A_n}$ is minimal by Proposition \ref{gamma twocycles}(b).  This is also true for $n=4$ but fails for $n=8$ since $\gamma(A_8)=2$ (see \cite{BU}).
\end{proof}

Observe that, by Proposition \ref{prime power}(c), it follows that the
upper bound $\gamma(A_n)\leq \lfloor\frac{n+4}{4}\rfloor$ given in Proposition \ref{upper} cannot be improved for general even $n$. Also, as noted in the proof above, the equality
$\gamma(A_{\,2^\alpha})=2^{\alpha-2}+1$
fails when $\alpha=3.$ In that case
$$
\{AGL_3(2),\ [S_3\times S_5]\cap A_8\}
$$
is a basic set for $A_8$. There are two non-conjugate copies of $AGL_3(2)$ in $A_8$ and either can be used to get a normal covering.

We now move on to the case where $n$ is a product of two distinct primes.

\begin{proposition}\label{two primes} Let $n=pq$ with $p, q$
primes and $p < q$, and let
$$
\delta(A)=\{A,\   S_k\times S_{n-k}\
 :\ 1\leq k<n/2,\  p,\ q \nmid k\}
$$
where $A=S_p\wr S_{q}$ or $A=S_q\wr S_{p}$.
\begin{description}
\item[(a)] Then, for each $A$, $\delta(A)$ is a basic set for $S_{pq}$; and $\delta(A)$ is minimal if $p$ is odd. Also by intersection, each $\delta(A)$ gives a basic set for $A_{pq}$ which is minimal if $p=2$.
\item[(b)] $\gamma(S_{pq})=\frac{\phi(pq)}{2}+1$ if $p$ is odd, and $\gamma(S_{2q})\leq\frac{\phi(q)}{2}+1 = \frac{q+1}{2}$.
\item[(c)]  $\gamma(A_{2q})= \frac{q+1}{2}$, and $\gamma(A_{pq})\leq \frac{\phi(pq)}{2}+1$ if $p$ is odd.
 \end{description}

\end{proposition}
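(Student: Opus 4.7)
The plan is to verify that $\delta(A)$ is a basic set for $S_{pq}$, count its cardinality, and apply the lower bounds of Proposition~\ref{gamma twocycles}; the corresponding statements for $A_{pq}$ will follow by intersection using Lemma~\ref{SA}, after observing that no component of $\delta(A)$ equals $A_n$.

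For the covering, given a cycle type $L=[\ell_1,\dots,\ell_r]$ of $S_{pq}$, three cases suffice. If $r=1$ then $L=[n]$ lies in $A=S_p\wr S_q$ via a single $q$-cycle in $S_q$ and a $p$-cycle as the base permutation. If some $\ell_i$ is coprime to $n$, set $k=\min(\ell_i,n-\ell_i)$; then $1\leq k<n/2$ and $\gcd(k,n)=1$, so $L\in S_k\times S_{n-k}\in\delta(A)$. The key step is the remaining case where every $\ell_i$ is divisible by $p$ or by $q$: I claim this forces every $\ell_i$ to be divisible by the \emph{same} prime. Indeed, writing $B_p=\{i:p\mid\ell_i\}$ and $B_q=\{i:q\mid\ell_i,\,p\nmid\ell_i\}$, if both were nonempty then $S_q:=\sum_{i\in B_q}\ell_i$ would be divisible by $q$ (each term is) and by $p$ (since $S_q=n-\sum_{B_p}\ell_i$ and $p$ divides both $n$ and the subtracted sum), hence by $pq=n$; but $0<S_q<n$, a contradiction. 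Once all $\ell_i$ are divisible by $p$, write $\ell_i=pa_i$ with $\sum a_i=q$, choose $\tau\in S_q$ of cycle type $(a_1,\dots,a_r)$ and a $p$-cycle base permutation on each $\tau$-cycle; the standard wreath product cycle-structure computation produces a permutation of cycle type $[pa_1,\dots,pa_r]=L$ in $A$. The symmetric construction with a $q$-cycle $\tau$ and $\psi\in S_p$ of type $(b_1,\dots,b_r)$ handles the case $\ell_i=qb_i$ with $\sum b_i=p$, and also works in $A=S_q\wr S_p$.

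For counting, the involution $k\mapsto n-k$ pairs the $\phi(n)$ integers in $[1,n-1]$ coprime to $n$, giving $\phi(n)/2$ such integers in $[1,n/2)$; hence $|\delta(A)|=1+\phi(n)/2$, which equals $(q+1)/2$ when $p=2$. For $p$ odd, $n=pq$ is odd composite, and Proposition~\ref{gamma twocycles}(a) gives the matching lower bound $\gamma(S_{pq})\geq 1+\phi(pq)/2$, proving minimality of $\delta(A)$ and part (b). Since no component of $\delta(A)$ equals $A_n$, Lemma~\ref{SA} yields $\gamma(A_{pq})\leq 1+\phi(pq)/2$ by intersection. For $p=2$, Proposition~\ref{gamma twocycles}(b) gives $\gamma(A_{2q})\geq 1+\phi(2q)/2=(q+1)/2$ (noting $2q\neq 8$ since $q$ is an odd prime), and combined with the upper bound from intersection this proves $\gamma(A_{2q})=(q+1)/2$ and minimality of the intersected covering. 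The main obstacle is the divisibility argument forcing $B_p$ or $B_q$ to be empty, which is what makes the single transitive component $A$ sufficient to cover all types not picked up by the intransitive family.
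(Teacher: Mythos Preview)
Your proof is correct and follows essentially the same route as the paper: you verify that $\delta(A)$ covers every cycle type via the same three-case split, use the identical divisibility argument (your version with $S_q$ divisible by both $p$ and $q$ is just a rephrasing of the paper's $pa+qb=pq \Rightarrow a/q+b/p=1$), count $|\delta(A)|=1+\phi(n)/2$, and then invoke Proposition~\ref{gamma twocycles} and Lemma~\ref{SA} exactly as the paper does. The only cosmetic differences are that you spell out the wreath-product construction of elements in $A$ more explicitly than the paper (which simply asserts it), and you fold the $r=2$ case into your general case analysis rather than treating it separately.
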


\begin{proof} Let $\delta=\delta(A)$ with $A=S_p\wr S_{q}$ or $A=S_q\wr S_p$. Then
$A$ contains permutations of any type $[\ell_1,\dots,\ell_r]$ such that either each $\ell_i$
is divisible by $p$, or each $\ell_i$ is divisible by $q$. Consider first permutations of type $L=[k,n-k]$ with $1\leq k\leq n/2.$ If $(k,n)\neq 1,$ then $L$ belongs to $A$, since either $p$ or $q$ divides both of $k$ and $n-k$. If $(k,n)=1,$ then $k\neq n/2$ and
$L$ belongs to $S_k\times S_{n-k}\in \delta.$

Now consider a type $L=[\ell_1,\dots,\ell_r]$ with $r\geq 3$, each $\ell_i
\geq 1,\  \sum_{i=1}^r \ell_i=n$. If some $\ell_i$ is coprime to $n$, then $n-\ell_i$ is
also coprime to $n$ and some $k\in\{\ell_i,\ n-\ell_i\}$ is less than $n/2$; in this case $L$ belongs to  $S_k\times S_{n-k}\in \delta.$
Assume now that $(\ell_i,n)\neq 1$ for all $i=1,\dots, r.$ Then for every $i$ we have $p\mid \ell_i$ or $q\mid \ell_i$, but not both since $\ell_i<n=pq.$ We claim: if $q$ does not divide $\ell_j,$ for some $j$, then $p$ divides all the $\ell_i$. To prove this, write $L$ as $[pa_1,\dots,pa_s,qb_{s+1},\dots,qb_r]$ with all $a_i, b_j\geq 1$ and with $s\geq 1.$ Then $pq=pa+qb$, where $a:=\sum_{i=1}^s a_i>0$ and $b:= \sum_{i=s+1}^r b_i$. This implies that $q\mid a$ and $p\mid b$, and we therefore have $a/q+b/p=1, $ with $a/q,\ b/p$ non-negative integers, which means that one is 0 and the other is 1. Since $a>0$ we conclude that $b=0$, that is, all entries are divisible by $p.$ Thus the claim is proved, and as we observed in the previous paragraph, each such type $L$ belongs to $A\in \delta$.

Thus $\delta$ generates a normal covering, and  in particular
$\gamma(S_{pq})\leq\frac{\phi(pq)}{2}+1.$
Since the basic set $\delta$ of $S_{pq}$ does not involve $A_{pq},$ by intersection we get also a basic set for $A_{pq}$ and hence $\gamma(A_{pq})\leq \frac{\phi(pq)}{2}+1.$
If $p$, and hence $n$ is odd, then by Proposition \ref{gamma twocycles}(a) it follows that $\gamma(S_{pq})=\frac{\phi(pq)}{2}+1$, and $\delta$ is minimal. If $p=2$, and hence $n$ is even, then we have
$\gamma(A_{2q})\leq
\lfloor\frac{2q+4}{4}\rfloor=\frac{q+1}{2}$, and the reverse inequality holds by Proposition~\ref{gamma twocycles}(b). Thus the normal covering generated by $\delta$ by intersection is minimal.
\end{proof}

The upper bound for $\gamma(S_{2p})$ given in Proposition~\ref{two primes}(b) cannot be improved in general as the following example shows.

\begin{example}\label{10} The normal covering of $S_{10}$
with basic set
$$
\delta:=\{S_{2}\wr S_{5},\  \ S_3\times S_{7}, \ S_9 \}
$$
is minimal normal and produces, by intersection, a minimal normal
covering for $A_{10}.$ In particular $\gamma(S_{10})=\gamma(A_{10})=3.$
\end{example}

\begin{proof}  By Proposition~\ref{gamma twocycles}(a),  $\gamma(S_{10})\geq 3$ and by
Proposition \ref{two primes}(b) we have also  $\gamma(S_{10})\leq 3.$
Thus $\gamma(S_{10})=3$ and it is easily checked that the given set $\delta$ generates a normal covering of $S_{10}$. Thus $\delta$ is minimal, and as its basic
components are maximal and do not involve $A_{10}$, it follows that its intersection with
$A_{10}$ produces a normal covering of $A_{10}$. That this normal covering is minimal follows from Proposition~\ref{gamma twocycles}(b).
\end{proof}

Finally we consider the general case in which $n$ has just two prime divisors, but is not a product of two primes.

\begin{proposition}\label{two primes power} Let $n=p^{\alpha}q^{\beta}$ with $p, q$  primes such that $p<q$, and with $\alpha,\ \beta$ positive integers such that $(\alpha,\beta)\neq (1,1).$  Then
\begin{description}
 \item[(a)]
 $
\delta=\{S_p\wr S_{n/p},\  S_q\wr S_{n/q},\  S_k\times S_{n-k}\
 :\ 1\leq k<n/2,\  p \nmid k,\ q \nmid k \}
$
generates a normal covering for $S_{n}$, which is minimal if $n$ is odd. The intersection of $\delta$ with $A_n$ generates a normal covering of $A_n$, which is minimal if $n$ is even.
\item[(b)]  $ \gamma(S_{p^{\alpha}q^{\beta}})\leq \frac{\phi(p^{\alpha}q^{\beta})}{2}+2$, with equality if $p$ is odd.
\item[(c)] $\gamma(A_{p^{\alpha}q^{\beta}})\leq\frac{\phi(p^{\alpha}q^{\beta})}{2}+2$, with equality if $p=2$.
 \end{description}
\end{proposition}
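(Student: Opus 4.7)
The plan is to follow the strategy of Proposition~\ref{two primes}, with extra care because the hypothesis $(\alpha,\beta)\neq(1,1)$ forces two wreath-product components rather than one. For (a), I will verify $\delta$ is a normal covering by showing every cycle type $L=[\ell_1,\dots,\ell_r]$ with $\sum\ell_i=n$ belongs to some component. If some $\ell_i$ is coprime to $n$, then $\ell_i=n/2$ is impossible (it would force $\gcd(n/2,pq)>1$ when $n$ is even), so $S_k\times S_{n-k}\in\delta$ covers $L$ for $k=\min(\ell_i,n-\ell_i)$. Otherwise every $\ell_i$ is divisible by $p$ or $q$; if some proper nontrivial subset sum is coprime to $pq$, the previous argument applies with $k$ equal to that subset sum. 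If no such subset sum exists, the key observation is that having both an $\ell_i$ divisible only by $p$ and an $\ell_j$ divisible only by $q$ would force $\ell_i+\ell_j$ to be coprime to $pq$, a contradiction; hence all $\ell_i$ share a common prime divisor, say $p$, and $L\in S_p\wr S_{n/p}$, realised by taking $\pi\in S_{n/p}$ with a cycle of length $\ell_i/p$ for each $i$ and a full $p$-cycle as the product of base permutations around each $\pi$-cycle. The count $|\delta|=\phi(n)/2+2$ follows from the bijection $k\leftrightarrow n-k$ on $\{k\in[1,n-1]:p\nmid k,\ q\nmid k\}$, which preserves coprimality to $pq$ since $pq\mid n$; the passage to $A_n$ uses Lemma~\ref{SA} since no component of $\delta$ equals $A_n$, settling (a) and the upper bounds in (b), (c).

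For the equality in (b) when $p$ is odd (so $n$ is odd), Proposition~\ref{gamma twocycles}(a) gives $\gamma(S_n)\geq\phi(n)/2+1$ and I rule out equality. A basic set $\delta'$ of this size contains $\{S_k\times S_{n-k}:k\in\mathcal U_n\}$ (by Lemma~\ref{two cycles}(a)) plus exactly two extra components $M_1,M_2$, which must jointly cover the types $[n]$, $[1,n-1]$, $[p^{n/p}]$ and $[q^{n/q}]$ (whose nontrivial subset sums all fail to be coprime to $n$ in the range $[2,n-2]$). Any $M_i$ containing $[1,n-1]$ is 2-transitive, since the stabiliser of the fixed point contains the induced $(n-1)$-cycle. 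If this $M_i$ also contains $[n]$, then Lemmas~\ref{Feit} and~\ref{nminusone} force $M_i$ to be $A_n$ (excluded since an $(n-1)$-cycle is odd when $n$ is odd), a projective linear group between $PGL_2(q_0)$ and $P\Gamma L_2(q_0)$ with $n=q_0+1$, $q_0$ prime (giving $q_0=2$, $n=3$), or one of $PSL_2(11),M_{11},M_{23}$ of prime degree; none is compatible with $n=p^\alpha q^\beta$, $\alpha+\beta\geq3$. Hence $[n]$ and $[1,n-1]$ lie in different components, so one of $M_1,M_2$ is a 2-transitive proper subgroup of $S_n$ containing an $(n-1)$-cycle but no $n$-cycle; the classification of 2-transitive groups excludes affine candidates (since $n$ is not a prime power), and an arithmetic case-check over the almost simple families finishes the contradiction. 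The equality in (c) for $p=2$ is proved by the analogous argument applied to $A_n$ using Proposition~\ref{gamma twocycles}(b) (note $n=8$ is excluded by our hypothesis since $8$ has only one prime divisor).

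The main obstacle is this last classification step: excluding a 2-transitive proper subgroup of $S_n$ (or $A_n$) that contains an $(n-1)$-cycle but no $n$-cycle. Lemma~\ref{nminusone} handles the projective linear case cleanly, but the remaining almost simple 2-transitive families (unitary, Suzuki, Ree, Mathieu and certain sporadic cases) must be dispatched by consulting the Cameron tables of 2-transitive groups together with the arithmetic constraint that $n=p^\alpha q^\beta$ has two distinct prime factors with $\alpha+\beta\geq3$.
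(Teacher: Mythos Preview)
Your treatment of part~(a) and the upper bounds in (b) and (c) is correct and matches the paper's argument almost verbatim.

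The equality arguments, however, have genuine gaps. First, your claim that ``any $M_i$ containing $[1,n-1]$ is $2$-transitive'' is false as stated: if $M_i$ is intransitive and maximal then $M_i=S_{n-1}$ (or $A_{n-1}$ in the alternating case), and this possibility must be excluded separately. The paper does this by forcing the types $[p,n-p]$ and $[q,n-q]$ into the other component $M$, which (via Lemma~\ref{imprimitive}) would make $M$ imprimitive with block size in $\{p,q\}$, yielding $n=pq$, contrary to $\alpha+\beta\geq 3$.

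Second, and more seriously, your endgame of deriving a contradiction from the $2$-transitive component $K$ \emph{alone} cannot work. For part~(c) there genuinely exist $2$-transitive proper subgroups of $A_n$ containing an $(n-1)$-cycle but no $n$-cycle for admissible $n$: take $n=12$ with $K=M_{12}$ or $PGL_2(11)$, or $n=24$ with $K=M_{24}$. The paper obtains its contradiction by simultaneously constraining the \emph{other} extra component $M$: types such as $[p,\dots,p,2p]$ (odd, so not in $A_n$) or $[2,\dots,2,4]$, $[2,\dots,2,6]$ together with Lemma~\ref{p-permutations} force $M$ to be imprimitive; then specific two-part types $[2s,n-2s]$ or $[aq,n-aq]$ are pushed into $K$, and the minimal-degree bound $\mu(K)\geq n/3$ from Lemma~\ref{min deg} gives the final arithmetic contradiction. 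None of these ingredients---the odd-permutation type $[p,\dots,p,2p]$, the imprimitivity of $M$, or the minimal-degree estimate---appears in your sketch. Your ``analogous argument'' for~(c) is therefore not analogous at all: the paper's proof of~(c) is roughly twice the length of~(b) and requires a careful case analysis over $M_{12}$, $M_{24}$, $Sp_{2d}(2)$, $U_3(7)$, and the $PSL_2(r)$ family, always playing $M$ and $K$ against one another.
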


\begin{proof}
First we show that $\delta$ is a basic set for $S_n$. We consider all types $L=[\ell_1,\dots,\ell_r]$ with each $\ell_i\geq1$, $r\geq1$, and $\sum_{i=1}^r\ell_i=n$. If each $\ell_i$ is divisible by $p$ then $L\in S_p\wr S_{n/p}$ in $\delta$, and if each $\ell_i$ is divisible by $q$ then $L\in S_q\wr S_{n/q}$ in $\delta$. In particular $\delta$ covers the $n$-cycles. So we may take $r\geq 2$. If some $\ell_i$ is coprime to $n,$ then $\ell_i\neq n/2$ and $n-\ell_i$ is coprime to $n,$ so $L\in S_{\ell_i}\times S_{n-\ell_i}$ in $\delta$. Thus we may assume that $(n,\ell_i)>1$ for all $i$, and that there are distinct $\ell_i,\, \ell_j$ such that $p\nmid\ \ell_i$ and $q \nmid\,\ell_j$. This implies that $p \mid \ell_j$ and $q \mid \ell_i$, and hence that $\ell_i+\ell_j$ is not divisible by either $p$ or $q$, so that  $(\ell_i+\ell_j,n)=1$. In particular $\ell_i+\ell_j\ne n/2$, so $L\in S_{\ell_i+\ell_j}\times S_{n-\ell_i-\ell_j}$ in $\delta$. Therefore $\delta$ is a basic set for $S_n$, and since $\delta$ does not involve $A_n,$ by Lemma~\ref{SA}, the intersection of $\delta$ with $A_n$ is a basic set for $A_n$.
Thus $ \gamma(S_{n})\leq \frac{\phi(n)}{2}+2$, and $\gamma(A_{n})\leq\frac{\phi(n)}{2}+2$.

\medskip\noindent
{\it Case: $S_n$ with $p$ odd:}\quad  By Proposition \ref{gamma twocycles}(a), $\gamma(S_n)\geq \frac{\phi(n)}{2}+1.$ Suppose for a contradiction, that $\gamma(S_n)= \frac{\phi(n)}{2}+1$, and let $\delta'$ be a minimal basic set for $S_n$.
For $1\leq k\leq n/2$ set $B_k:= S_k\times S_{n-k}$. It follows from Lemma~\ref{two cycles} that $\delta'$ contains $\{B_k\,|\ 1<k<n/2,\ p \nmid k,\ q \nmid k\}$. Let $M, K$ be the other two members of $\delta'$. We may assume that $M$ and $K$ are both maximal subgroups of $S_n$. Then the types $[n], [n-1,1]$ and each type $L=[\ell_1,\dots,\ell_r]$ with either each $\ell_i$ divisible by $p$, or  each $\ell_i$ divisible by $q$, must belong to $M$ or $K$ (or both).

We claim that $A_n\not\in\delta'$. Suppose to the contrary that $M=A_n$. Since $[n-1,1]$ is an odd permutation, it must lie in $K$. Since $K$ is maximal it is $S_{n-1}$ (if intransitive) or is 2-transitive (if transitive). Now a permutation $\sigma$ of type $[p,\dots,p,2p]$ is an odd permutation and therefore does not lie in $A_n$; $\sigma$ has no fixed points and therefore does not lie in $S_{n-1}$; and by Lemma~\ref{p-permutations}(a), $\sigma$ does not lie in a primitive proper subgroup (and hence does not lie in a 2-transitive proper subgroup). This is a contradiction since the type $[p,\dots,p,2p]$ must belong to $M$ or $K$. Thus
$A_n\not\in\delta'$.

Suppose next that both $[n]$ and $[n-1,1]$ belong to the same subgroup in $\delta'$, say $M$. By Lemma~\ref{Feit} and the maximality of $M$ it follows that $M=P\Gamma L_d(s)$ acting on $n=(s^d-1)/(s-1)$ points or hyperplanes. Then by Lemma~\ref{nminusone}, $d=2$ and, since $n\neq 5,$ then $s$ is a prime. Thus $n=s+1,$ with $n$ odd, whence $n=3$, which is a contradiction. Thus we may assume that $[n]\in M$ and $[n-1,1]\in K$. In particular $M$ is transitive, and either $K=S_{n-1}$ or $K$ is 2-transitive.

Suppose that $M$ is primitive. Then by Lemma~\ref{p-permutations},   $[p,\dots,p,2p]$ does not belong to $M$, and hence must belong to $K$. This means that, on the one hand, $K$ is not primitive (by Lemma~\ref{p-permutations} again), and on the other hand $K\ne S_{n-1}$ (since the type  $[p,\dots,p,2p]$ has no fixed points). This is a contradiction, and so $M$ is imprimitive. Thus $M=S_a\wr S_b$ for some $a>1, b>1$ with $n=ab$.

We prove next that $K$ is 2-transitive. If not then $K=S_{n-1}$, so $K$ does not contain permutations of types $[p,n-p]$ or $[q,n-q]$ and so both types belong to $M$.
Then, by Lemma~\ref{imprimitive} and the facts that $(p,n-p)=p$ and $(q,n-q)=q$, it follows that $\{a,b\}=\{p,q\}$. This implies that $n=pq$, which is a contradiction.
Thus $K$ is 2-transitive.

Now consider the types $[2p,n-2p]$ and $[2q,n-2q]$. Since $n-2p, n-2q$ are odd, we have $(2p,n-2p)=p$ and $(2q,n-2q)=q$, and the argument of the previous paragraph shows that the types cannot both belong to $M$. Thus $K$ contains an element $\sigma$ of type $[2s,n-2s]$ for some $s\in\{p,q\}$. Since $(2s,n-2s)=s$, the element $\sigma^{n-2s}$ is a product of $s$ transpositions and so $\mu(K)\leq 2s$. However $\mu(K)\geq n/3$ by Lemma~\ref{min deg}, and hence $n/s\leq 6$.  This is a contradiction since $n/s$ is a product of at least two odd primes (since $\alpha+\beta\geq3$).  Hence $\gamma(S_n)= \frac{\phi(n)}{2}+2$ if $p$ is odd.

\medskip\noindent
{\it Case: $A_n$ with $p=2$:}\quad Since $\alpha+\beta\geq3$ we have $n\geq12$. Thus  by Proposition \ref{gamma twocycles}(b), $\gamma(A_n)\geq \frac{\phi(n)}{2}+1.$ Suppose for a contradiction, that $\gamma(A_n)= \frac{\phi(n)}{2}+1$, and let $\delta'$ be a minimal basic set for $A_n$.
For $1\leq k\leq n/2$ set $B_k':= [S_k\times S_{n-k}]\cap A_n$. It follows from Lemma~\ref{two cycles} that $\delta'$ contains $\{B_k'\,|\ 1<k<n/2,\ 2 \nmid k,\ q \nmid k\}$. Let $M, K$ be the other two members of $\delta'$. We may assume that $M$ and $K$ are both maximal subgroups of $A_n$. Then the type $[n-1,1]$ and each type $L=[\ell_1,\dots,\ell_r]$ with an even number of the $\ell_i$ even, and either each $\ell_i$ divisible by $q$, or  each $\ell_i$ even, must belong to $M$ or $K$ (or both).

Suppose without loss of generality that $[n-1,1]\in K$. Then, by maximality, either $K=A_{n-1}$ or $K$ is 2-transitive. Suppose first that $K=A_{n-1}$. Then   the type $[2,\dots,2,4]$ (if $4$ does not divide $n$) or  $[2,\dots,2,6]$ (if $4$ divides $n$) belongs to $M$, and it follows from Lemma~\ref{p-permutations} that $M$ is not primitive. Also the types $[2,n-2], [4,n-4]\in M$ and it follows first that $M$ must be transitive (and hence imprimitive), and then that $M=[S_2\wr S_{n/2}]\cap A_n$ or
$M=[S_{n/2}\wr S_2]\cap A_n$. Finally the type $[q,n-q]\in M$, and by Lemma~\ref{imprimitive} this is only possible if $n=2q$, which is a contradiction. Thus the group $K$ must be 2-transitive. Now $K$ is not an affine group since $n$ is not a prime power, and hence $K$ is an almost simple 2-transitive group by Burnside's Theorem \cite[Theorem 4.1B]{DM}.

Suppose next that the stabiliser $K_n$ of the point $n\in \Omega$ is primitive on the set $\{1,2,\dots,n-1\}$. Then, since $K_n$ contains an $(n-1)$-cycle and $K_n\ne A_{n-1}$, it follows from Lemma~\ref{Feit}  that one of the following holds (we use  $n=2^\alpha q^\beta$ with $\alpha+\beta\geq3$, and we use the table of almost simple $2$-transitive groups \cite[page 197]{CA}):
\begin{description}
 \item[(i)] $K_n\leq AGL_1(r)$ with $n-1=r$ prime, and hence $K=PSL_2(r)$ or $PGL_2(r)$
(by the classification of Zassenhaus groups \cite[Theorem 11.16]{HB});

\item[(ii)] $PGL_d(r)\leq K_n\leq P\Gamma L_d(r)$ with $n-1=\frac{r^d-1}{r-1}$, but in this case there is no possible 3-transitive almost simple group $K$ of even degree  $n=2^\alpha q^\beta = 1+(r^d-1)/(r-1)$ with $\alpha+\beta\geq3$;

\item[(iii)] $K=M_{11}, M_{12}$ or $M_{24}$ with $n=12, 12$ or $24$ respectively.
 \end{description}
Suppose first that $n=12$. Then by maximality, $K=M_{12}$, so $\delta'=\{B_5'=[S_5\times S_7]\cap A_{12}, M_{12}, M\}$. Now $M_{12}$ contains no element of type $[3,9]$, $[2,2,2,6]$, $[1,1,1,9]$, or $[1,1,1,1,8]$ (see \cite{AT}), and it follows that all of these types belong to $M$, as they do not belong to $B_5'$. Thus $M$ is transitive, and by Lemma~\ref{p-permutations}, $M$ must be imprimitive. The only maximal imprimitive groups containing the type $[3,9]$ are $M=[S_3\wr S_4]\cap A_{12}$ and $M=[S_4\wr S_3]\cap A_{12}$. The former does not have elements of type $[1,1,1,1,8]$, while the latter does not have elements of type $[1,1,1,9]$. Thus $n>12$.

In case (i) above, $K$ has minimum degree $n-2$, and hence, since $n>12$ and $n>2q$, $K$ does not contain the types $[4,n-4]$, $[6,n-6]$, or $[q,n-q]$. Thus all these types belong to $M$ and it follows that $M$ is transitive. By Lemma~\ref{p-permutations}, the types $[2,\dots,2,4]$ and  $[2,\dots,2,6]$ do not belong to $K$ and hence one of them belongs to $M$, and a further application of   Lemma~\ref{p-permutations} shows that $M$ is imprimitive.
The only maximal imprimitive groups containing the type $[q,n-q]$ are $M=[S_q\wr S_{n/q}]\cap A_{n}$ and $M=[S_{n/q}\wr S_q]\cap A_{n}$ (by Lemma~\ref{imprimitive}). Thus one of these groups must contain an element of type $[4,n-4]$, and this is only possible if $n=4q$. However this implies that neither of these groups has elements of type $[6,n-6]$ since $n>12$. Thus (i) does not hold for $K$.

This leaves $n=24, K=M_{24}$ in case (iii). In this case the types $[9,15]$, $[10,14]$, $[2,\dots,2,6]$ do not belong to $K$ and hence must belong to $M$. In particular $M$ is transitive, and by Lemma~\ref{p-permutations}, $M$ is imprimitive. However the only maximal imprimitive subgroup $M=[S_{a}\wr S_b]\cap A_{24}$ containing type $[9,15]$ has $\{a,b\}=\{3,8\}$ while the only one containing type   $[10,14]$ has $\{a,b\}=\{2,12\}$. Thus we reach a contradiction.

We conclude that  $K_n$ is imprimitive on $\{1,2,\dots,n-1\}$. In particular $n-1$ is not prime. Hence $n\ne 12, 18, 20$ or 24, so $n=2^\alpha q^\beta\geq 28$. Note also that, since $K$ is 2-transitive, its minimal degree $\mu(K)\geq n/3$ by Theorem~\ref{min deg}.
If the type $[2,n-2]\in K$, then $K_n$ would contain a permutation of type $[1,1,\frac{n-2}{2}, \frac{n-2}{2}]$ and this would force $K_n$ to be primitive on $\{1,2,\dots,n-1\}$, which is a contradiction. Thus $[2,n-2]\not\in K$ and so $[2,n-2]\in M$. Also  the type $[2,\dots,2,4]$ (if $4$ does not divide $n$) or  $[2,\dots,2,6]$ (if $4$ divides $n$) does not belong to $K$ by  Lemma~\ref{p-permutations}, and hence belongs to $M$ which implies (again by  Lemma~\ref{p-permutations}) that $M$ is not primitive. The only possibilities for $M$ are therefore $M=[S_{2}\times S_{n-2}]\cap A_{n}$, $[S_{n/2}\wr S_2]\cap A_{n}$ and $[S_2\wr S_{n/2}]\cap A_{n}$.

Thus for each odd integer $a<n/q$ such that $a\ne n/(2q)$, we have $n-aq\geq q>2$ and $aq\ne n/2$ and so, by Lemma~\ref{imprimitive}, the type $[aq,n-aq]$ does not lie in any of the possibilities for $M$. Hence $K$ contains an element $g$ of this type. Suppose first that $q=3$. Then $n=2^\alpha 3^\beta\geq 36$ (since $n\geq 28$) and we may choose $a=5$. Then $(5,n-5q)=1$ so $g^{n-aq}=g^{n-15}$ moves exactly 15 points, and so  $15\geq \mu(K)\geq n/3$ by Theorem~\ref{min deg}. This implies that $n=36$. By \cite[page 197]{CA} the only possibility for $K$ is $Sp_6(2)$, but this group has no element of order $n-1=35$ (see \cite{AT}). Thus $q\geq5$ and, since $n\geq 4q$ and $3$ does not divide $n$, we may choose $a=3$. In this case $(3,n-3q)=1$ so  $g^{n-aq}=g^{n-3q}$ moves exactly $3q$ points, and so  $3q\geq \mu(K)\geq n/3$ by Theorem~\ref{min deg}. This implies that $n=4q$ or $8q$ since $q\geq5$. By \cite[page 197]{CA} the possible groups are the following, and we use information about these groups from \cite{AT}:
\begin{description}
 \item[(i)] $n=4\times 7=28, K=P\Gamma L_2(8), Sp_6(2)$, or $U_3(3)$, but none of these groups contains an element of order 27;

\item[(ii)] $n=8\times 43=344, K = U_3(7)$, but $K$ contains no element of order 343;

\item[(iii)] $n=8\times 17=136, K=Sp_8(2)$, but $K$ contains no element of order 135;

\item[(iv)] $n=\frac{r^d-1}{r-1},\  PSL_d (r) \leq K\leq P\Gamma L_d(r) \cap A_n.$ Since $K$ contains an $(n-1)$-cycle, by Lemma~\ref{nminusone}  we get that $d=2$ and $r$ is a prime (since $n\ne5$). It follows  that $K\leq PGL_2(r)$ and then $\mu(K)\geq \mu(PGL_2(r))=n-2$. Therefore $3q\geq n-2\geq 4q-2$, which is a contradiction.
 \end{description}
Thus if $n$ is even then $\gamma(A_n)= \frac{\phi(n)}{2}+2$.
\end{proof}

Proposition \ref{two primes power} gives exact values of $\gamma$ for many small alternating and symmetric groups and a sample of such values is given in Example~\ref{gamma 45}. Moreover,
although Proposition \ref{two primes power} does not determine the exact $\gamma$-value in the case of symmetric groups with $n$ of the form $2^\alpha q^\beta$, it is possible to find this for some small even values of $n$. We do this for $n=12$ below.

\begin{examples} \label{gamma 45} $\gamma(S_{45})=8,\ \gamma(S_{75})=22,\ \gamma(S_{63})=11,\ \gamma(A_{18})=5,\ \gamma(A_{20})=6,\ \gamma(A_{36})=8,\  \gamma(A_{44})=12.$
\end{examples}

\begin{corollary}\label{gamma S12}
$\gamma(S_{12})=\gamma(A_{12})=4.$
\end{corollary}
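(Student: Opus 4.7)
The plan is to combine Proposition~\ref{upper}(a) and Proposition~\ref{two primes power}(c) with a direct analysis of admissible cycle types. The upper bounds $\gamma(S_{12})\le 4$ and $\gamma(A_{12})\le 4$ are immediate from Proposition~\ref{upper}(a) since $\lfloor(12+4)/4\rfloor=4$, while the equality $\gamma(A_{12})=4$ is the $p=2$ case of Proposition~\ref{two primes power}(c) applied to $n=2^2\cdot 3$. The remaining work is the lower bound $\gamma(S_{12})\ge 4$; I will argue by contradiction, assuming a basic set $\delta$ of three maximal subgroups. Since $\gamma(A_{12})=4$, Lemma~\ref{SA} forces $A_{12}\in\delta$, so I may write $\delta=\{A_{12},H_1,H_2\}$ and require $H_1\cup H_2$ to meet every \emph{odd} cycle type of $S_{12}$.

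The key lever is the trio of odd types involving a $7$-cycle: $[7,4,1]$, $[7,3,2]$, $[7,2,1,1,1]$. No transitive maximal subgroup of $S_{12}$ different from $A_{12}$ realises a cycle of length $7$: each wreath product $S_a\wr S_b$ with $ab=12$ has part lengths of the form $dk$ with $k\le b$, $d\le a$, and $7$ is not such a product for $(a,b)\in\{(2,6),(3,4),(4,3),(6,2)\}$; $M_{12}$ lies in $A_{12}$; and $PGL_2(11)$, of order $1320=2^3\cdot 3\cdot 5\cdot 11$, contains no element of order $7$. Hence at least one of $H_1,H_2$ is intransitive; relabelling if necessary, let $H_1=S_k\times S_{12-k}$ with $1\le k\le 5$. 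For $H_1$ to realise each of the three $7$-cycle types, some sub-multiset of the parts must sum to $k$; the admissible $k$-values are $\{1,4,5\}$, $\{2,3,5\}$, $\{1,2,3,4,5\}$ respectively, whose intersection is $\{5\}$. Thus $H_1=S_5\times S_7$, and since the $12$-cycle type lies outside $A_{12}$ and outside $H_1$, the second subgroup $H_2$ must be transitive.

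The final step is to show no transitive maximal $H_2\ne A_{12}$ can cover the odd types omitted by $S_5\times S_7$, for which I will focus on the pair $[10,1,1]$ and $[9,2,1]$ (neither lies in $S_5\times S_7$, as no submultiset of their parts sums to $5$). A short cycle-structure analysis in $S_a\wr S_b$, using that every top $k$-cycle contributes cycles of length $dk$ where $d$ ranges over the cycle lengths of the bottom product in $S_a$, shows that $[10,1,1]$ appears only in $S_2\wr S_6$ (top $5$-cycle with non-trivial bottom, plus a top-fixed block with trivial bottom) while $[9,2,1]$ appears only in $S_3\wr S_4$ (top $3$-cycle with bottom a $3$-cycle in $S_3$, plus a top-fixed block with bottom of shape $(1,2)$). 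Moreover, $PGL_2(11)$ realises $[10,1,1]$ (an element of order $10$ fixes two points of $\mathbb{P}^1(\mathbb{F}_{11})$) but not $[9,2,1]$, since $9\nmid 1320$. As no single transitive maximal subgroup houses both types, $H_2$ cannot exist, giving the required contradiction. The main technical obstacle is the wreath-product cycle-structure bookkeeping, but each sub-case reduces to a quick divisibility or parity check once the recipe for cycles in $S_a\wr S_b$ is recorded.
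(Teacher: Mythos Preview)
Your argument is correct and follows the same skeleton as the paper's proof: both use Proposition~\ref{two primes power}(c) for $\gamma(A_{12})=4$, then Lemma~\ref{SA} to force $A_{12}\in\delta$, then pin down the intransitive component as $S_5\times S_7$ via types containing a $7$-cycle, and finally eliminate every candidate for the transitive component.

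The differences are cosmetic but worth noting. First, where you argue directly that no transitive maximal subgroup of $S_{12}$ other than $A_{12}$ contains a $7$-cycle (checking wreath products and $PGL_2(11)$ by hand), the paper invokes Lemma~\ref{three cycles} for the types $[1,4,7]\in\mathcal{T}$ and $[3,2,7]\in\mathcal{T}'(I)$, which packages the same computation. Second, your endgame uses the incompatible pair $[10,1,1]$ and $[9,2,1]$, exhausting all transitive maximals; the paper instead uses $[1,2,9]$, whose ninth power is a transposition, so Lemma~\ref{WI 13.5} forces $K$ imprimitive, whence $K=S_3\wr S_4$, which then misses $[1,3,8]$. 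The paper's route avoids needing the explicit list of primitive maximal subgroups of $S_{12}$.

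One small expository fix: you should establish that $H_2$ is transitive (from the odd $12$-cycle) \emph{before} asserting that $H_1$ alone must realise all three $7$-cycle types; as written, the reader could momentarily wonder why $H_2$ cannot be a second intransitive subgroup sharing the load. Also, the third type $[7,2,1,1,1]$ is redundant, since $\{1,4,5\}\cap\{2,3,5\}=\{5\}$ already.
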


\begin{proof} By Proposition \ref{two primes power}, $\gamma(A_{12})=4$ and $\gamma(S_{12})\leq 4$, and moreover, by \cite{BBH}, $\gamma(S_{12})\geq 3$. Assume now that $\gamma(S_{12})=3$, and let $\delta$ be a basic set   for a minimal normal covering. Then $\delta$  must contain $A_{12}$ as otherwise, by intersection, we would get a 3-normal covering for $A_{12}$ contradicting $\gamma(A_{12})=4$. Since $A_{12}$ does not contain $12$-cycles, we may assume that $\delta=\{A_{12}, H, K\}$ for some subgroups $K,\ H$ maximal in $S_{12}$ and $K$ transitive.

By Lemma~\ref{three cycles}, $H$ must be an intransitive subgroup containing a permutation of type $[1,4,7]$ and a permutation of type $[3,2,7],$ and hence $H=S_5\times S_7.$ Since no permutation of type $[1,2,9]$ belongs to
$A_{12}$ or to $S_5\times S_7$ we must have $[1,2,9]\in K.$ Then $\sigma^9\in K$  is a transposition and, from Lemma~\ref{WI 13.5}, it follows  that $K$ is imprimitive. The only imprimitive maximal subgroups of $S_{12}$ containing $[1,2,9]$ are conjugate to  $S_3\wr S_4$ and thus $\delta=\{A_{12},\ S_5\times S_7,\ S_3\wr S_4\}.$ However, no component in this set $\delta$ contains the type $[1,3,8].$
\end{proof}

\section{
Aknowledgements}
The first author thanks J. Sonn for suggesting the study this kind of problem. Both authors thank
S. Dolfi and  S. Vessella for many enlightening remarks
and F. Luca for number theoretic advice which helped us to determine the asymptotic behavior of some of our lower bounds.

The first author is supported by GNSAGA. The second author is supported by Australian Research Council Federation Fellowship FF0776186.

%

\end{document}